\def\E{\mathbb E}
\newtheorem{lem}{Lemma}
\newtheorem{thm}{Theorem}
\newtheorem{thmletter}{Theorem}
\newtheorem{cor}[thm]{Corollary}
\newtheorem{prop}[thm]{Proposition}
\theoremstyle{definition}
\newtheorem*{remark}{Remark}
\xpatchcmd{\proof}{\itshape}{\normalfont\proofnameformat}{}{}
\newcommand{\proofnameformat}{}
\begin{document}

\renewcommand{\proofnameformat}{\bfseries}

\begin{center}
{\large\textbf{Equidistribution of random walks on compact groups}}

\vspace{5mm}

\textbf{Bence Borda}

{\footnotesize Alfr\'ed R\'enyi Institute of Mathematics, Hungarian Academy of Sciences

1053 Budapest, Re\'altanoda u.\ 13--15, Hungary

Email: \texttt{borda.bence@renyi.mta.hu}}

\vspace{5mm}

{\footnotesize \textbf{Keywords:} empirical distribution, ergodic theorem, strong law of large numbers, law of the iterated logarithm, central limit theorem, Wiener process}

{\footnotesize \textbf{Mathematics Subject Classification (2010):} 60G50, 60B15}
\end{center}

\vspace{5mm}

\begin{abstract}
Let $X_1, X_2, \dots$ be independent, identically distributed random variables taking values from a compact metrizable group $G$. We prove that the random walk $S_k=X_1 X_2 \cdots X_k$, $k=1,2,\dots$ equidistributes in any given Borel subset of $G$ with probability $1$ if and only if $X_1$ is not supported on any proper closed subgroup of $G$, and $S_k$ has an absolutely continuous component for some $k \ge 1$. More generally, the sum $\sum_{k=1}^N f(S_k)$, where $f:G \to \mathbb{R}$ is Borel measurable, is shown to satisfy the strong law of large numbers and the law of the iterated logarithm. We also prove the central limit theorem with remainder term for the same sum, and construct an almost sure approximation of the process $\sum_{k \le t} f(S_k)$ by a Wiener process provided $S_k$ converges to the Haar measure in the total variation metric.
\end{abstract}

\section{Introduction}

Let $G$ be a compact Hausdorff group with normalized Haar measure $\mu$, and let $X_1, X_2, \dots$ be a sequence of independent, identically distributed (i.i.d.\ for short) $G$-valued random variables. The random walk $S_k=\prod_{j=1}^k X_j = X_1 X_2 \cdots X_k$ is a classical object in probability theory. Throughout we assume that the distribution of $X_1$ is a regular Borel probability measure $\nu$ on $G$; the distribution of $S_k$ is thus $\nu^{*k}$, the $k$-fold convolution of $\nu$. Generalizing results of L\'evy \cite{L} on the circle group $G=\mathbb{R} / \mathbb{Z}$ and Kawada and It\^o \cite{KI} on compact metrizable groups, it was Urbanik \cite{U} and Kloss \cite{K} who proved that if $\nu^{*k}$ is weakly convergent, then its weak limit is the normalized Haar measure of a closed subgroup of $G$. Stromberg \cite{ST} gave the following necessary and sufficient condition for the weak limit to be the Haar measure on $G$ itself. We shall say that $\nu$ is adapted if the support of $\nu$ is not contained in any proper closed subgroup of $G$, and that $\nu$ is strictly aperiodic if the support of $\nu$ is not contained in a coset of any proper closed normal subgroup of $G$.
\begin{thmletter}\label{weakconvergence} Let $G$ be a compact Hausdorff group, and let $\nu$ be a regular Borel probability measure on $G$. The following are equivalent.
\begin{enumerate}
\item[(i)] $\nu$ is adapted and strictly aperiodic.
\item[(ii)] $\nu^{*k} \to \mu$ weakly as $k \to \infty$.
\end{enumerate}
\end{thmletter}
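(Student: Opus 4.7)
The natural framework is Fourier analysis via the Peter--Weyl theorem: every continuous irreducible unitary representation $\pi$ of the compact group $G$ is finite-dimensional, and their matrix coefficients are dense in $C(G)$, so the convolution identity $\widehat{\nu^{*k}}(\pi) = \hat\nu(\pi)^k$ (with $\hat\nu(\pi) := \int_G \pi(g)\,d\nu(g)$) reduces the weak convergence $\nu^{*k}\to\mu$ to the question of when $\hat\nu(\pi)^k \to 0$ for every nontrivial irreducible $\pi$.

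I would first dispatch $(ii)\Rightarrow(i)$ by contraposition. If $\operatorname{supp}\nu$ lies in a proper closed subgroup $H$, Urysohn's lemma in the compact Hausdorff space $G$ yields a continuous $f:G\to[0,1]$ equal to $1$ on $H$ and vanishing at some point outside $H$; then $\int f\,d\nu^{*k}=1$ for all $k$ while $\int f\,d\mu<1$, ruling out weak convergence. If instead $\operatorname{supp}\nu\subseteq g_0 N$ for some proper closed normal subgroup $N$, I would pass to the nontrivial compact Hausdorff quotient $G/N$: the pushforward of $\nu^{*k}$ is the Dirac mass at $g_0^k N$, yet weak convergence to Haar on $G/N$ would force $\chi(g_0^k N)\to 0$ for every nontrivial irreducible unitary representation $\chi$ of $G/N$, which cannot happen because powers of a unitary matrix never tend to zero entry-wise.

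For $(i)\Rightarrow(ii)$, after the Peter--Weyl reduction, since $\hat\nu(\pi)$ is a contraction on the finite-dimensional representation space $V$ of $\pi$, it suffices to show that $\hat\nu(\pi)$ has no eigenvalue of modulus $1$ whenever $\pi$ is nontrivial. So I would assume for contradiction that $\hat\nu(\pi)v=\lambda v$ with $|\lambda|=1$ and $v\ne 0$. Pairing with $v$ gives $\int\langle\pi(g)v,v\rangle\,d\nu(g)=\lambda\|v\|^2$, and since $|\langle\pi(g)v,v\rangle|\le\|v\|^2$ by Cauchy--Schwarz, the equality case forces $\pi(g)v=\lambda v$ for $\nu$-a.e.\ $g$, hence for every $g\in\operatorname{supp}\nu$ by continuity.

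The key step is to promote this to a statement about a one-dimensional character. I would introduce $V_0 := \{w\in V:\pi(g)w=\lambda w\ \forall g\in\operatorname{supp}\nu\}\ni v$ and $L := \{g\in G:\pi(g)|_{V_0}\in\mathbb{C}\cdot I_{V_0}\}$, and verify routinely that $L$ is a closed subgroup of $G$ containing $\operatorname{supp}\nu$, with scalar factor $\alpha:L\to S^1$ a continuous homomorphism. Adaptedness then upgrades $L$ to $G$, making $V_0$ a $\pi(G)$-invariant subspace; irreducibility of $\pi$ forces $V_0=V$, then $\pi=\alpha\cdot I_V$, and (by irreducibility once more) $\dim V=1$, so $\pi=\alpha$ is a nontrivial continuous character with $\alpha(g)=\lambda$ on $\operatorname{supp}\nu$. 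The case $\lambda=1$ places $\operatorname{supp}\nu$ inside the proper closed subgroup $\ker\alpha$, contradicting adaptedness; the case $\lambda\ne 1$ places $\operatorname{supp}\nu$ in a coset of the proper closed \emph{normal} subgroup $\ker\alpha$, contradicting strict aperiodicity. The reduction from a general irreducible $\pi$ to a one-dimensional character through the pair $(V_0,L)$ is the delicate point where both halves of hypothesis (i) are brought to bear simultaneously.
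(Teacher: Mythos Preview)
The paper does not supply its own proof of this statement: Theorem~A is quoted from the literature (attributed to Stromberg~\cite{ST}, with antecedents by L\'evy, Kawada--It\^o, Urbanik and Kloss), so there is no in-paper argument to compare against. Your Peter--Weyl reduction to the spectral radius of $\hat\nu(\pi)$, followed by the equality-case analysis of the Cauchy--Schwarz inequality to extract a one-dimensional character, is exactly the classical route and is correct as written.

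One small point worth making explicit in your write-up: when you verify that $L=\{g\in G:\pi(g)|_{V_0}\in\mathbb{C}\cdot I_{V_0}\}$ is closed under inverses, note that for $g\in L$ and $w\in V_0$ the relation $\pi(g)w=\alpha(g)w$ immediately yields $\pi(g^{-1})w=\alpha(g)^{-1}w$, so $\pi(g^{-1})$ also acts as a scalar on $V_0$; this step is routine but is where one might otherwise worry whether $\pi(g^{-1})$ preserves $V_0$.
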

\noindent A similar classical result gives a necessary and sufficient condition for convergence in the total variation metric $\| \cdot \|_{\mathrm{TV}}$. The Lebesgue decomposition of $\nu^{*k}$ with respect to the Haar measure $\mu$ will be written as $\nu^{*k}=(\nu^{*k})_{\mathrm{abs}}+(\nu^{*k})_{\mathrm{sing}}$, where $(\nu^{*k})_{\mathrm{abs}}$ is absolutely continuous and $(\nu^{*k})_{\mathrm{sing}}$ is singular with respect to $\mu$.
\begin{thmletter}\label{normconvergence} Let $G$ be a compact Hausdorff group, and let $\nu$ be a regular Borel probability measure on $G$. The following are equivalent.
\begin{enumerate}
\item[(i)] $\nu$ is adapted and strictly aperiodic, and $(\nu^{*k})_{\mathrm{abs}} \neq 0$ for some $k \ge 1$.
\item[(ii)] $\| \nu^{*k} -\mu \|_{\mathrm{TV}} \to 0$ as $k \to \infty$.
\end{enumerate}
Moreover, if these equivalent conditions hold, then the convergence in (ii) is exponentially fast.
\end{thmletter}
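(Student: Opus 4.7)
The plan is to prove the two implications separately, with essentially all work concentrated in (i)$\Rightarrow$(ii). For (ii)$\Rightarrow$(i), TV convergence implies weak convergence, so Theorem A delivers adaptedness and strict aperiodicity; moreover, since $(\nu^{*k})_{\mathrm{abs}}-\mu$ and $(\nu^{*k})_{\mathrm{sing}}$ are mutually singular, the Lebesgue decomposition of $\nu^{*k}-\mu$ yields $\|(\nu^{*k})_{\mathrm{sing}}\|_{\mathrm{TV}} \le \|\nu^{*k}-\mu\|_{\mathrm{TV}} \to 0$, so $(\nu^{*k})_{\mathrm{abs}}$ has total mass tending to $1$ and is nonzero for all large $k$.

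The core of (i)$\Rightarrow$(ii) is to produce an integer $k_2$ and a constant $c'\in(0,1]$ with the pointwise domination $\nu^{*k_2}\ge c'\mu$. Granted this, writing $\nu^{*k_2}=c'\mu+(1-c')\sigma$ for a probability measure $\sigma$ and using $\mu*\alpha=\alpha*\mu=\mu$ for any probability $\alpha$, induction gives
\[
\nu^{*(nk_2)}=\bigl(1-(1-c')^n\bigr)\mu+(1-c')^n\sigma^{*n},
\]
hence $\|\nu^{*(nk_2)}-\mu\|_{\mathrm{TV}}\le 2(1-c')^n$. For a general $k=nk_2+r$ with $0\le r<k_2$, writing $\nu^{*k}-\mu=(\nu^{*(nk_2)}-\mu)*\nu^{*r}$ and invoking the contraction of convolution on TV extends the exponential bound to every $k$, delivering both (ii) and the exponential rate in a single stroke.

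I would construct the dominating bound in two stages. \emph{Local step:} let $h\in L^1(\mu)$ be the density of $(\nu^{*k_0})_{\mathrm{abs}}$ and form the truncation $h_N=h\mathbf{1}_{\{h\le N\}}$, which belongs to $L^2(\mu)$ and is nonzero for large $N$. Then $\nu^{*(2k_0)}\ge(h_N*h_N)\mu$; since $L^2*L^2\subset C(G)$ on a compact group, $h_N*h_N$ is continuous, and $\int(h_N*h_N)\,d\mu=\bigl(\int h_N\,d\mu\bigr)^2>0$ forces $h_N*h_N\ge c$ on some nonempty open set $U\subset G$ for some $c>0$. \emph{Uniformization step:} a short change of variables shows that $\mathbf{1}_U\mu*\nu^{*m}$ has density $z\mapsto\nu^{*m}(U^{-1}z)$ with respect to $\mu$, so it suffices to bound $\nu^{*m}(U^{-1}z)$ below uniformly in $z$ for some large $m$. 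Picking via Urysohn a continuous $\phi$ with $0\le\phi\le\mathbf{1}_{U^{-1}}$ and $\int\phi\,d\mu>0$, the problem reduces to verifying $\int\phi(yz^{-1})\,d\nu^{*m}(y)\to\int\phi\,d\mu$ uniformly in $z$. The family of right translates $\{y\mapsto\phi(yz^{-1}):z\in G\}$ is uniformly bounded and uniformly equicontinuous, so the pointwise weak convergence supplied by Theorem A upgrades to uniform convergence along this family by an Arzel\`a--Ascoli argument, which yields $\nu^{*m}(U^{-1}z)\ge c_0>0$ uniformly for all large $m$. Setting $k_2=2k_0+m$ and $c'=cc_0$ finishes the construction.

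The principal obstacle is precisely this uniformization step: without compactness and equicontinuity, Theorem A only provides pointwise control of $\nu^{*m}(U^{-1}z)$ in $z$, which is insufficient to obtain a single $\nu^{*k_2}$ dominating a positive multiple of $\mu$. Strict aperiodicity enters the entire proof only through Theorem A at exactly this point.
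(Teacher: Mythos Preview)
The paper does not prove Theorem~\ref{normconvergence}; it is stated in the introduction as a classical result, with the general case attributed to \cite{AG} and \cite{RX} and special cases to \cite{B}. There is therefore no in-paper proof to compare against.

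Your argument is correct and follows a standard route. The direction (ii)$\Rightarrow$(i) is clean: the TV-decomposition $\|\nu^{*k}-\mu\|_{\mathrm{TV}}=\|(\nu^{*k})_{\mathrm{abs}}-\mu\|_{\mathrm{TV}}+\|(\nu^{*k})_{\mathrm{sing}}\|_{\mathrm{TV}}$ indeed forces the singular part to vanish in mass. For (i)$\Rightarrow$(ii), the two-stage construction of a Doeblin-type minorization $\nu^{*k_2}\ge c'\mu$ is the classical mechanism, and once obtained the geometric decay $\|\nu^{*(nk_2)}-\mu\|_{\mathrm{TV}}\le 2(1-c')^n$ plus the TV-contraction under convolution handles all $k$. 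Your local step (truncate the density, convolve into $C(G)$ via $L^2*L^2$) and your uniformization step (reduce to a uniform lower bound on $z\mapsto\nu^{*m}(U^{-1}z)$, then exploit compactness and the uniform equicontinuity of right translates of a fixed $\phi\in C(G)$ to upgrade the pointwise weak convergence of Theorem~\ref{weakconvergence} to uniform convergence) are both sound. One small remark: since Theorem~\ref{normconvergence} is stated for general compact Hausdorff $G$, the compactness/equicontinuity step should be phrased with nets and the canonical uniform structure on $G$ rather than sequences; the substance of the argument is unchanged.
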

\noindent Special cases of Theorem \ref{normconvergence} were proved by Bhattacharya \cite{B}. For the general case and the history of related results see \cite{AG} and \cite{RX}. We mention that if $G$ is connected, then the assumption that $(\nu^{*k})_{\mathrm{abs}} \neq 0$ for some $k \ge 1$ implies that $\nu$ is adapted and strictly aperiodic. This follows from the fact that in a connected, compact Hausdorff group any proper closed subgroup has Haar measure $0$.

Theorems \ref{weakconvergence} and \ref{normconvergence} concern the distribution of $S_k$ for a given $k \ge 1$. We can also view $S_k$, $k=1,2,\dots$ as a random sequence in $G$, and consider the empirical distribution of the terms $S_1, S_2, \dots, S_N$ for some $N \ge 1$. Under the technical assumption that $G$ is metrizable, Berger and Evans \cite[Corollary 3.1]{BE} proved the following.
\begin{thmletter}\label{uniformdistribution} Let $G$ be a compact metrizable group. Let $X_1, X_2, \dots$ be i.i.d.\ $G$-valued random variables with distribution $\nu$, and set $S_k=\prod_{j=1}^k X_j$. The following are equivalent.
\begin{enumerate}
\item[(i)] $\nu$ is adapted.
\item[(ii)] For any continuous function $f:G \to \mathbb{R}$
\begin{equation}\label{LLN}
\lim_{N \to \infty} \frac{1}{N} \sum_{k=1}^N f(S_k) = \int_G f \, \mathrm{d} \mu \quad \mathrm{a.s.}
\end{equation}
\end{enumerate}
\end{thmletter}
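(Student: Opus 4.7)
For (ii) $\Rightarrow$ (i), I argue by contraposition. If $\nu$ is not adapted, then $\mathrm{supp}\,\nu$ is contained in a proper closed subgroup $H \subseteq G$, so $S_k \in H$ for every $k$ almost surely. We have $\mu(H) < 1$: either $\mu(H) = 0$, or disjointness of the cosets of $H$ forces $H$ to have finite index at least $2$ with $\mu(H) = 1/[G:H]$. Normality of $G$ together with outer regularity of $\mu$ lets me use Urysohn's lemma to produce a continuous $f : G \to [0,1]$ equal to $1$ on $H$ with $\int_G f \,\mathrm{d}\mu < 1$; this contradicts (\ref{LLN}), whose left-hand side is then identically $1$.

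For (i) $\Rightarrow$ (ii), the Peter--Weyl theorem asserts that matrix entries of the continuous irreducible unitary representations span a uniformly dense subspace of $C(G)$, and because $G$ is compact metrizable there are only countably many isomorphism classes of such (necessarily finite-dimensional) representations. A uniform approximation argument, combined with the fact that a countable intersection of almost-sure events is almost sure, reduces (\ref{LLN}) to the case where $f$ is a matrix entry of an irreducible $\pi$. The trivial representation is immediate; for a non-trivial irreducible $\pi : G \to U(V)$ set $A = \int_G \pi \,\mathrm{d}\nu \in \mathrm{End}(V)$. I aim to show $T_N := \sum_{k=1}^N \pi(S_k) = o(N)$ almost surely, which yields (\ref{LLN}) entrywise since matrix entries of non-trivial irreducibles have zero mean under $\mu$.

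The key step is a closed-form identity derived from $\pi(S_{k+1}) = \pi(S_k)\pi(X_{k+1})$, namely
\begin{equation*}
T_N (I - A) = (I - \pi(S_N)) A + M_N, \qquad M_N := \sum_{k=0}^{N-1} \pi(S_k)(\pi(X_{k+1}) - A).
\end{equation*}
Here $M_N$ is a matrix-valued martingale with uniformly bounded differences (operator norm at most $2$), so Azuma--Hoeffding applied entrywise, combined with Borel--Cantelli, yields $M_N/N \to 0$ almost surely, while $(I-\pi(S_N))A/N \to 0$ trivially. To invert $I - A$, suppose $Av = v$ with $v \neq 0$; unitarity of $\pi(g)$ turns
\begin{equation*}
\|v\| = \|Av\| = \Bigl\|\int_G \pi(g)v\,\mathrm{d}\nu(g)\Bigr\| \leq \int_G \|\pi(g)v\|\,\mathrm{d}\nu(g) = \|v\|
\end{equation*}
into an equality in the triangle inequality, forcing $\pi(g)v = v$ for $\nu$-almost every $g$. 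The stabilizer of $v$ is then a closed subgroup containing $\mathrm{supp}\,\nu$; by adaptedness it equals $G$, so $\mathbb{C}v$ is $\pi$-invariant, contradicting either irreducibility (if $\dim V \geq 2$) or non-triviality (if $\dim V = 1$). Hence $I - A$ is invertible and $T_N/N \to 0$ almost surely.

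The main obstacle to a more direct approach is that under adaptedness alone (without strict aperiodicity) the distributions $\nu^{*k}$ need not converge weakly to $\mu$, so Theorem~\ref{weakconvergence} does not apply to individual time steps; one only has Ces\`aro weak convergence of $\nu^{*k}$. Nonetheless, adaptedness is exactly enough to ensure that $1$ is not an eigenvalue of $A$ for every non-trivial irreducible $\pi$, which is the sole spectral input required by the martingale identity above. A naive Markov-chain/mean-ergodic approach would in addition demand an almost-sure statement at the deterministic starting point $e \in G$, rather than at $\mu$-almost every point as produced by Birkhoff's theorem; the martingale identity finesses this by producing an explicit finite expression for $T_N$ whose error is controlled purely by martingale concentration.
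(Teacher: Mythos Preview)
Your proof is correct. Note, however, that the paper does not itself prove Theorem~\ref{uniformdistribution}: it is quoted as a known result of Berger and Evans \cite[Corollary 3.1]{BE}, so there is no proof in the paper to compare against directly.

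That said, your (ii)$\Rightarrow$(i) argument parallels the paper's proof of the analogous implication in Theorem~\ref{stronguniformityLIL}, where the indicator $I_H$ serves as a Borel witness; since here you need a continuous one, invoking Urysohn together with outer regularity is the right adjustment. Your (i)$\Rightarrow$(ii) argument via the identity $T_N(I-A)=(I-\pi(S_N))A+M_N$ is clean and self-contained: it needs neither ergodic theory nor weak convergence of $\nu^{*k}$ (which, as you observe, may fail without strict aperiodicity). The spectral input --- that $1$ is not an eigenvalue of $\hat\nu(\pi)$ for any non-trivial irreducible $\pi$ whenever $\nu$ is adapted --- is exactly the Fourier characterization of adaptedness, and your equality-case-in-Cauchy--Schwarz extraction of it is correct. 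For $M_N/N\to 0$ a.s., the $L^2$ martingale strong law already suffices (bounded increments give $\sum_k \E|D_k|^2/k^2<\infty$, then apply Kronecker's lemma); Azuma--Hoeffding plus Borel--Cantelli also works but is more than you need.
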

\noindent Note that a.s.\ (almost surely) means that the given relation holds with probability $1$. Since the Banach space of continuous, real-valued functions on $G$ (or indeed, on any compact metric space) is separable, condition (ii) in the previous theorem is equivalent to the property that with probability $1$, \eqref{LLN} holds for all continuous functions $f: G \to \mathbb{R}$ simultaneously. A (deterministic) sequence $a_k$, $k=1,2,\dots$ in $G$ is called uniformly distributed if $\lim_{N \to \infty} (1/N) \sum_{k=1}^N f(a_k) = \int_G f \, \mathrm{d} \mu$ for any continuous function $f:G \to \mathbb{R}$. Theorem \ref{uniformdistribution} thus states that the random sequence $S_k$, $k=1,2,\dots$ is uniformly distributed with probability $1$ if and only if $\nu$ is adapted. See \cite{BB}, \cite{BR} and \cite{SCH} for related results on the circle group $G=\mathbb{R} / \mathbb{Z}$, and \cite{BE} for the case of continuous time processes.

In this paper we consider $\sum_{k=1}^N f(S_k)$ for Borel measurable functions \mbox{$f: G \to \mathbb{R}$,} and we prove the following analogue of Theorem \ref{uniformdistribution}.
\begin{thm}\label{stronguniformityLIL} Let $G$ be a compact metrizable group. Let $X_1, X_2, \dots$ be i.i.d.\ $G$-valued random variables with distribution $\nu$, and set $S_k=\prod_{j=1}^k X_j$. The following are equivalent.
\begin{enumerate}
\item[(i)] $\nu$ is adapted, and $(\nu^{*k})_{\mathrm{abs}} \neq 0$ for some $k \ge 1$.
\item[(ii)] For any bounded, Borel measurable function $f:G \to \mathbb{R}$
\begin{equation}\label{strongLLN}
\lim_{N \to \infty} \frac{1}{N} \sum_{k=1}^N f(S_k) = \int_G f \, \mathrm{d} \mu \quad \mathrm{a.s.}
\end{equation}
\item[(iii)] For any bounded, Borel measurable function $f:G \to \mathbb{R}$
\[ \limsup_{N \to \infty} \frac{\left| \sum_{k=1}^N f(S_k) -N \int_G f \, \mathrm{d} \mu \right|}{\sqrt{N \log \log N}} < \infty \quad \mathrm{a.s.} \]
\end{enumerate}
\end{thm}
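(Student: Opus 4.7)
I would prove the cycle $(\mathrm{i}) \Rightarrow (\mathrm{iii}) \Rightarrow (\mathrm{ii}) \Rightarrow (\mathrm{i})$. The implication $(\mathrm{iii}) \Rightarrow (\mathrm{ii})$ is immediate since $\sqrt{N \log \log N} = o(N)$. For $(\mathrm{ii}) \Rightarrow (\mathrm{i})$ I would argue by contraposition and exhibit bounded Borel counterexamples. If $\nu$ is not adapted, then $\mathrm{supp}(\nu) \subseteq K$ for some proper closed subgroup $K$, so $S_k \in K$ a.s., and $\mathbf{1}_K$ violates the strong law since $\mu(K) < 1$ in a compact group. If instead $\nu^{*k}$ is singular with respect to $\mu$ for every $k \ge 1$, I would pick Borel sets $A_k$ with $\mu(A_k) = 0$ and $\nu^{*k}(A_k) = 1$ and set $A = \bigcup_k A_k$; then $\mu(A) = 0$ yet $S_k \in A$ a.s.\ for every $k$, so $f = \mathbf{1}_A$ violates $(\mathrm{ii})$.

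The substantive direction is $(\mathrm{i}) \Rightarrow (\mathrm{iii})$. I would first treat the case that $\nu$ is also strictly aperiodic, so that Theorem \ref{normconvergence} delivers $\|\nu^{*j} - \mu\|_{\mathrm{TV}} \le C \rho^j$ for some $\rho < 1$. Introduce the Markov operator $Pu(x) = \int u(xy)\,\mathrm{d}\nu(y)$, set $g = f - \int f\,\mathrm{d}\mu$, and observe that translation invariance of $\mu$ combined with the TV bound yields $\|P^j g\|_\infty \le C \|g\|_\infty \rho^j$. Hence the series $h = \sum_{j \ge 0} P^j g$ converges uniformly to a bounded Borel solution of the Poisson equation $(I-P)h = g$. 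I would then decompose
\[
g(S_k) = \bigl[h(S_k) - h(S_{k+1})\bigr] + \bigl[h(S_{k+1}) - \mathbb{E}(h(S_{k+1}) \mid \mathcal{F}_k)\bigr]
\]
and sum over $k = 1, \ldots, N$: the first bracket telescopes to a quantity of modulus at most $2\|h\|_\infty$, while the second is a bounded martingale difference sequence. Stout's law of the iterated logarithm for bounded martingale differences (or Azuma--Hoeffding plus Borel--Cantelli along dyadic subsequences, combined with Doob's maximal inequality) then produces the required $O(\sqrt{N \log \log N})$ bound.

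To handle the case that $\nu$ is adapted but not strictly aperiodic, I would reduce to the previous one. Here $\mathrm{supp}(\nu) \subseteq gH$ for some proper closed normal subgroup $H$; the hypothesis $(\nu^{*k})_{\mathrm{abs}} \neq 0$ rules out $\mu(H) = 0$, so $H$ is open of finite index $d$ in $G$, and adaptedness forces $gH$ to generate the cyclic group $G/H$. Consequently $S_k \in g^k H$ with period $d$. I would split $\sum_{k=1}^N f(S_k)$ into the $d$ subsums corresponding to residues of $k$ modulo $d$; for each residue $r$ the coset-shifted sequence $g^{-r} S_{dk+r}$ is a random walk on $H$ with iid steps of distribution $\nu^{*d}$ supported on $H$. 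Applying the strictly aperiodic sub-case inside $H$ to each such subsum and combining via $\int_G f\,\mathrm{d}\mu = \frac{1}{d}\sum_{r=0}^{d-1} \int_{g^r H} f\,\mathrm{d}\mu_{g^r H}$ then yields $(\mathrm{iii})$.

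The main obstacle I anticipate is the clean verification that $\nu^{*d}$, viewed as a measure on the open subgroup $H$, satisfies the hypotheses of the strictly aperiodic sub-case on $H$. Survival of the absolutely continuous component is immediate because Haar measure on $H$ is the normalized restriction of $\mu$; adaptedness on $H$ follows from the adaptedness of $\nu$ on $G$ via a subgroup-closure argument; but strict aperiodicity on $H$ is the most delicate point and will likely require either a minimal choice of the normal subgroup $H$ or an inductive argument that terminates in finitely many steps because each further reduction strictly enlarges the relevant subgroup.
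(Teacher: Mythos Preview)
Your treatment of $(\mathrm{iii})\Rightarrow(\mathrm{ii})$ and $(\mathrm{ii})\Rightarrow(\mathrm{i})$ matches the paper's exactly, and your reduction of the general case of $(\mathrm{i})\Rightarrow(\mathrm{iii})$ to the strictly aperiodic case is also essentially the paper's argument: the paper likewise passes to the finite-index normal subgroup $H$, checks that $(H,\nu^{*d})$ again satisfies the hypotheses, conditions on $\mathcal{F}_r$ to view each residue-class subsum as a shifted walk on $H$, and iterates. One small correction: termination is not because a subgroup ``enlarges'' but because the quantity $\alpha(G,\nu)=\lim_k \mu_G(\mathrm{supp}\,\nu^{*k})$ satisfies $\alpha(H,\nu^{*d})=d\cdot\alpha(G,\nu)$ and is bounded above by $1$, so only finitely many reductions are possible.

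Where you genuinely diverge from the paper is the strictly aperiodic case of $(\mathrm{i})\Rightarrow(\mathrm{iii})$. The paper does \emph{not} use the Poisson equation. Instead it builds a block-coupling apparatus: it partitions $\{1,\dots,N\}$ into alternating short and long intervals, uses Strassen's coupling of $\nu^{*k}$ with $\mu$ (via the total-variation distance) to replace each long-block sum by an independent copy, and then applies Strassen's almost sure invariance principle to the resulting independent blocks. Your route---solve $(I-P)h=g$ via $h=\sum_{j\ge 0}P^j g$ using the exponential total-variation decay, write $g(S_k)$ as a bounded telescoping term plus a bounded martingale increment, and invoke Stout's martingale LIL (or Azuma--Hoeffding along dyadic blocks)---is correct and considerably lighter for the specific statement at hand, since $f$ is bounded and only the $\limsup$ bound is required. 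What the paper's heavier machinery buys is reusability: the same block-coupling construction drives its quantitative results for unbounded $f\in L^p$, the Berry--Esseen-type CLT, and the Wiener-process approximation, none of which your Poisson-equation argument would reach without substantial modification. For Theorem~\ref{stronguniformityLIL} alone, your approach is the more economical one.
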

\noindent The implications (iii)$\Rightarrow$(ii)$\Rightarrow$(i) are straightforward. Condition (ii) is in fact equivalent to the assumption that \eqref{strongLLN} holds for the indicator function $f=I_B$ of any Borel set $B \subseteq G$; indeed, a bounded, Borel measurable function can be uniformly approximated by finite linear combinations of such indicator functions. The equivalence (i)$\Leftrightarrow$(ii) in Theorem \ref{stronguniformityLIL} thus states that the random sequence $S_k$, $k=1,2,\dots$ equidistributes in any given Borel set with probability $1$ if and only if $\nu$ is adapted, and $(\nu^{*k})_{\mathrm{abs}} \neq 0$ for some $k \ge 1$. Equidistribution of a random sequence in any given Borel set with probability $1$ is sometimes called the ``strong uniform distribution'' property. In contrast, (ordinary) uniform distribution means equidistribution in any Borel set $B \subseteq G$ such that $\mu (\partial B)=0$. Note that equidistribution in all Borel sets simultaneously is impossible; in particular, no deterministic sequence satisfies the strong uniform distribution property (unless $G$ is finite).

In Theorems \ref{uniformdistribution} and \ref{stronguniformityLIL} we did not assume that $\nu$ is strictly aperiodic, whereas in Theorems \ref{weakconvergence} and \ref{normconvergence} strict aperiodicity is required for the convergence of $\nu^{*k}$. In the proof of the implication (i)$\Rightarrow$(iii) in Theorem \ref{stronguniformityLIL} we will thus first assume that $\nu$ is strictly aperiodic. In case the support of $\nu$ is contained in a coset of a closed normal subgroup $H \lhd G$, we will see that the factor group $G/H$ is finite and cyclic, and we will argue by induction on the index $|G:H|$. Surprisingly, in Theorem \ref{stronguniformityLIL} the strong law of large numbers (condition (ii)) and the law of the iterated logarithm (condition (iii)) are equivalent. This is a consequence of the fact that whenever $\nu^{*k}$ converges to the Haar measure $\mu$ in the total variation metric, the convergence is necessarily exponentially fast. This fact does not have an analogue for weak convergence. We also prove the following central limit theorem under the technical assumption that $\nu$ is a central measure. Note that condition (ii) below expresses convergence in distribution to the standard normal distribution.
\begin{thm}\label{stronguniformityCLT} Let $G$ be a compact metrizable group. Let $X_1, X_2, \dots$ be i.i.d.\ $G$-valued random variables with distribution $\nu$, and set $S_k=\prod_{j=1}^k X_j$. Assume that $\nu$ is central. The following are equivalent.
\begin{enumerate}
\item[(i)] $\nu$ is adapted and strictly aperiodic, and $(\nu^{*k})_{\mathrm{abs}} \neq 0$ for some $k \ge 1$.
\item[(ii)] For any bounded, Borel measurable function $f:G \to \mathbb{R}$ such that $\int_G f \, \mathrm{d} \mu =0$ and $f$ is not $\mu$-a.e.\ zero, we have
\[ \frac{\sum_{k=1}^N f(S_k)}{\sqrt{C(f,\nu ) N}} \overset{d}{\longrightarrow} \mathcal{N}(0,1) \]
with some constant $C(f,\nu)>0$ depending only on $f$ and $\nu$.
\end{enumerate}
\end{thm}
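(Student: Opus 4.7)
By contrapositive, I would split into three cases. If $\nu$ is not adapted, then $S_k$ almost surely lies in a proper closed subgroup $H$; choose Borel $A$ with $0<\mu(A)<1$ so that either $A\subseteq H^c$ (possible when $\mu(H)<1$) or $A=H$ (when $H$ is open, i.e.\ has finite index), so that for $f=I_A-\mu(A)$ the sum $\sum_{k=1}^N f(S_k)$ is a.s.\ of deterministic order $N$, violating the CLT. If $(\nu^{*k})_{\mathrm{abs}}=0$ for every $k$, then $S_k$ is a.s.\ confined to a Haar-null Borel set and an analogous choice of $A$ gives the same contradiction. Finally, if only strict aperiodicity fails (the other two conditions of (i) holding), then $\mathrm{supp}(\nu)\subseteq g_0H$ for a proper closed normal subgroup $H\lhd G$; the existence of an absolutely continuous component of some $\nu^{*k}$ forces $\mu(H)>0$, hence $|G:H|=n<\infty$, and adaptedness makes $G/H$ cyclic of order $n\ge 2$. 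Then $\sum_{k=1}^N(I_{g_0H}(S_k)-1/n)$ is a deterministic bounded sequence, again contradicting the CLT for $f=I_{g_0H}-1/n$.

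\textbf{(i) $\Rightarrow$ (ii):} Under (i), Theorem \ref{normconvergence} gives $\|\nu^{*k}-\mu\|_{\mathrm{TV}}\le C\rho^k$ for some $\rho<1$. For bounded Borel $f$ with $\int_G f\,\mathrm{d}\mu=0$, the transition operator $Pf(x)=\int_G f(xy)\,\mathrm{d}\nu(y)$ satisfies $\|P^kf\|_\infty\le\|f\|_\infty\|\nu^{*k}-\mu\|_{\mathrm{TV}}$ by translation invariance, so $g:=\sum_{k=0}^\infty P^kf$ converges in $L^\infty$ and solves the Poisson equation $(I-P)g=f$. Gordin's decomposition
\[\sum_{k=1}^N f(S_k)=\sum_{k=1}^N\bigl[g(S_k)-Pg(S_{k-1})\bigr]+Pg(S_0)-Pg(S_N)\]
writes the sum as a martingale with bounded increments plus an $O(1)$ telescoping error; a martingale CLT (with conditional variances controlled via exponential mixing) then yields $\sum_{k=1}^N f(S_k)/\sqrt{C(f,\nu)N}\overset{d}{\longrightarrow}\mathcal{N}(0,1)$ with asymptotic variance $C(f,\nu)=\int_G f^2\,\mathrm{d}\mu+2\sum_{k=1}^\infty\int_G f\cdot P^kf\,\mathrm{d}\mu$, provided $C(f,\nu)>0$.

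\textbf{Positivity via centrality:} By Schur's lemma, centrality forces $\hat\nu(\pi)=\lambda_\pi I_{d_\pi}$ with $\lambda_\pi=d_\pi^{-1}\int_G\chi_\pi\,\mathrm{d}\nu$, so each matrix coefficient of an irreducible representation $\pi$ is an eigenfunction of $P$ with eigenvalue $\lambda_\pi$, and under (i) one has $|\lambda_\pi|<1$ for every non-trivial $\pi$. Expanding $f$ via Peter--Weyl and summing the resulting geometric series gives
\[C(f,\nu)=\sum_{\pi\neq 1}d_\pi\,\|\hat f(\pi)\|_{\mathrm{HS}}^2\cdot\frac{1-|\lambda_\pi|^2}{|1-\lambda_\pi|^2},\]
a sum of strictly positive contributions; hence $C(f,\nu)>0$ precisely when $f$ has at least one non-vanishing non-trivial Fourier coefficient, which (given $\int_G f\,\mathrm{d}\mu=0$) is equivalent to $f$ not being $\mu$-a.e.\ zero.

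\textbf{Main obstacle:} The delicate point will be justifying the martingale CLT and the Fourier formula for $C(f,\nu)$ for general bounded Borel $f$, since the chain starts from $S_1\sim\nu$ rather than from the invariant distribution and the Peter--Weyl series converges only in $L^2$. I expect to handle this by first proving the CLT for $f$ a finite linear combination of matrix coefficients, where every quantity is explicit, and then extending by $L^2$-approximation, using the exponential mixing to bound the variance of the Fourier tail uniformly in $N$.
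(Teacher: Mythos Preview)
Your (ii)$\Rightarrow$(i) is essentially the paper's argument: exhibit a centered indicator $f$ for which the partial sums are deterministically $O(1)$ or of order $N$, so no nondegenerate Gaussian limit can occur. The paper handles the three failure modes in exactly this way (citing the proof of Theorem~\ref{stronguniformityLIL} for the first two and using $f=I_H-\mu(H)$ for the third).

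For (i)$\Rightarrow$(ii) you take a genuinely different route. The paper never solves a Poisson equation or invokes a martingale CLT; instead it couples $\nu^{*|J|}$ to $\mu$ via Strassen's theorem, cuts $[1,N]$ into long blocks $J_i$ separated by short buffers $H_i$, and replaces the block sums by \emph{independent} variables $Y_i^*\overset{d}{=}\sum_{k=1}^{|J_i|}f(US_k)$, to which the classical Lyapunov bound for independent summands applies (this is Theorem~\ref{maintheoremCLT}). The payoff of that heavier machinery is a Berry--Esseen rate and, with the same block scheme, the almost-sure invariance principle of Theorem~\ref{maintheoremWIENER}; your Gordin decomposition is cleaner and perfectly adequate for the bare CLT required in Theorem~\ref{stronguniformityCLT}, but does not immediately give a rate. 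Your positivity argument via Schur's lemma and the resulting spectral formula is essentially the paper's Proposition~\ref{specialcasesprop} (case~(iii)).

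Two remarks on the execution. First, to apply the martingale CLT you need $N^{-1}\sum_{k=1}^{N}\mathbb{E}[D_k^2\mid\mathcal{F}_{k-1}]\to\sigma^2$ in probability; since this average is $N^{-1}\sum_{k}h(S_{k-1})$ with $h=P(g^2)-(Pg)^2$ bounded Borel, you can either cite the already-established Theorem~\ref{stronguniformityLIL} or bound the variance directly via $|\mathrm{Cov}(h(S_j),h(S_k))|\ll\Delta_{|k-j|}$. Make this step explicit rather than leaving it as ``controlled via exponential mixing''. Second, your ``main obstacle'' is a phantom: because $g=\sum_{k\ge 0}P^kf$ converges in $L^\infty$ for every bounded Borel $f$ with $\int_G f\,\mathrm{d}\mu=0$, the Gordin decomposition and the martingale CLT apply directly, with no need for $L^2$-approximation by matrix coefficients; the non-stationary start $S_0=e$ is absorbed in the bounded coboundary $Pg(S_0)-Pg(S_N)$ and does not affect the limit.
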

\noindent Recall that a compact Hausdorff topological space is metrizable if and only if it is second countable. We mention that in the proofs of Theorems \ref{stronguniformityLIL} and \ref{stronguniformityCLT} the choice of the metric on $G$ is irrelevant; we only use the second countability of $G$. Whether Theorems \ref{stronguniformityLIL} and \ref{stronguniformityCLT} are true for compact Hausdorff groups remains open.

\section{Results}

\subsection{Preliminaries}

For the rest of the paper we assume that $G$ is a compact metrizable group. Let $\mu$ denote the Haar measure on $G$ normalized so that $\mu (G)=1$. We will write $L^p(G)=L^p(G,\mu)$ for the Lebesgue space of real-valued, Borel measurable functions with respect to $\mu$, and $\| f \|_p= \| f \|_{L^p(G,\mu)}$. In addition, $\| \cdot \|_p$ will also denote the $L^p$-norm of (real-valued) random variables. Recall that $\mu$ is both left and right invariant; that is, for any Borel set $B \subseteq G$ and any $y \in G$ we have $\mu (yB)=\mu (By)=\mu (B)$. Therefore for any $f \in L^1(G)$ we have
\[ \int_G f(x) \, \mathrm{d}\mu (x)= \int_G f(xy) \, \mathrm{d}\mu (x) = \int_G f(yx) \, \mathrm{d}\mu (x). \]
The total variation of a finite, signed Borel measure $\vartheta$ on $G$ is defined as
\[ \| \vartheta \|_{\mathrm{TV}} = \sup \left\{ \left| \int_G f \, \mathrm{d} \vartheta \right| : f: G \to \mathbb{R} \textrm{ Borel measurable, } \sup_G |f| \le 1 \right\} . \]
Given two finite, signed Borel measures $\vartheta_1$ and $\vartheta_2$ on $G$, their convolution \mbox{$\vartheta_1 * \vartheta_2$} is the unique finite, signed Borel measure such that for any bounded, Borel measurable function $f: G \to \mathbb{R}$
\[ \int_G f \, \mathrm{d} (\vartheta_1 * \vartheta_2 ) = \int_G \int_G f (xy) \, \mathrm{d} \vartheta_1 (x) \mathrm{d} \vartheta_2 (y). \]
It is easy to see that $\| \vartheta_1 * \vartheta_2 \|_{\mathrm{TV}} \le \| \vartheta_1 \|_{\mathrm{TV}} \cdot \| \vartheta_2 \|_{\mathrm{TV}}$. A finite, signed Borel measure $\vartheta$ on $G$ is called central if $\vartheta (y^{-1}By)=\vartheta (B)$ for all Borel sets $B \subseteq G$ and all $y \in G$. Similarly, a Borel measurable function $f:G \to \mathbb{R}$ is called a class function if $f(y^{-1}xy)=f(x)$ for all $x,y \in G$. Note that for any Borel probability measure $\nu$ on $G$ we have $\nu * \mu = \mu * \nu = \mu$. If $\nu_1$ and $\nu_2$ are Borel probability measures on $G$, then $\| \nu_1 -\nu_2 \|_{\mathrm{TV}}=2 \sup_B |\nu_1 (B) - \nu_2 (B)|$, where the supremum is over all Borel sets $B \subseteq G$. The support of a Borel probability measure $\nu$ on $G$, denoted by $\mathrm{supp}\, \nu$, is the smallest closed set $F \subseteq G$ such that $\nu (F)=1$.

\begin{remark} Every finite Borel measure on $G$ (or indeed, on any Polish space) is regular. Therefore in the definitions of total variation and convolution we could have equivalently used continuous functions $f: G \to \mathbb{R}$ instead of bounded, Borel measurable ones. The existence and uniqueness of the convolution thus follows from the Riesz representation theorem.
\end{remark}

A $G$-valued random variable is a Borel measurable map $X$ from a probability space to $G$. Let $\nu_X$ denote the distribution of $X$; that is, $\nu_X (B)=\Pr (X \in B)$ for all Borel sets $B \subseteq G$. The variable $X$ is called uniformly distributed if $\nu_X = \mu$. If $X$ and $Y$ are independent $G$-valued random variables, then $\nu_{XY}=\nu_X * \nu_Y$. We shall write $X \overset{d}{=} Y$ if the (real-valued or $G$-valued) random variables $X$ and $Y$ have the same distribution.

Let $\hat{G}$ denote the unitary dual of $G$; that is, a complete set of pairwise unitarily inequivalent, irreducible unitary representations of $G$. Recall that every such representation is finite dimensional, and let $d_{\pi}$ denote the dimension of $\pi \in \hat{G}$. Thus $\pi(x)$ is a $d_{\pi} \times d_{\pi}$ unitary matrix with complex entries for any given $x \in G$. Let $\pi_0 \in \hat{G}$, $\pi_0=1$ denote the trivial representation. Given $f \in L^1(G)$ and $\pi \in \hat{G}$, let $\hat{f}(\pi)=\int_G f(x) \pi(x)^* \, \mathrm{d}\mu (x)$ denote the Fourier coefficients of $f$. Here $\pi(x)^*$ denotes the conjugate transpose of $\pi(x)$, and the integral is taken entrywise. The Fourier coefficients of a finite, signed Borel measure $\vartheta$ on $G$ are defined similarly as $\hat{\vartheta}(\pi )=\int_G \pi(x)^* \, \mathrm{d} \vartheta (x)$, $\pi \in \hat{G}$. The Parseval formula states that for any $f,g \in L^2(G)$ we have
\[ \int_G f g \, \mathrm{d} \mu = \sum_{\pi \in \hat{G}} d_{\pi} \mathrm{tr} \left( \hat{f}(\pi ) \hat{g} (\pi )^* \right) , \]
where $\mathrm{tr}$ denotes trace. Given $\pi \in \hat{G}$ the complex conjugate $\bar{\pi}$ is also an irreducible unitary representation of $G$, called the contragradient of $\pi$. The contragradient $\bar{\pi}$ may or may not be unitarily equivalent to $\pi$. For the theory of Fourier analysis on compact groups we refer the reader to \cite{FO}.

The notation $a_n \ll b_n$ and $a_n=O(b_n)$ mean that there exists an (implied) constant $K \ge 0$ such that $|a_n| \le K b_n$ for all $n \ge 1$. We write $a_n=\Theta (b_n)$ if $a_n \ll b_n \ll a_n$. We will use subscripts to denote dependence of the implied constant on certain parameters; thus e.g. $a_n \ll_{f,\nu} b_n$ and $a_n=O_{f,\nu}(b_n)$ mean that the implied constant may depend on $f$ and $\nu$. We emphasize that in the statement of all theorems, propositions and lemmas implied constants in the notation $\ll$ and $O$ are universal; in particular, they do not even depend on the group $G$.

\subsection{The main theorems}

Let $G$ be a compact metrizable group with normalized Haar measure $\mu$, let $X_1, X_2, \ldots$ be i.i.d.\ $G$-valued random variables with distribution $\nu$, and set $S_k = \prod_{j=1}^k X_j$. Let $\Delta_k= \| \nu^{*k}-\mu \|_{\mathrm{TV}}=2 \sup_B |\Pr (S_k \in B) - \mu (B)|$ denote the total variation distance of the distribution of $S_k$ from the uniform distribution. Note that
\[ \| \nu^{*{(k+1)}}-\mu \|_{\mathrm{TV}} = \| (\nu^{*k}-\mu)*\nu \|_{\mathrm{TV}} \le \| \nu^{*k}-\mu \|_{\mathrm{TV}} \cdot \| \nu \|_{\mathrm{TV}} = \| \nu^{*k}-\mu \|_{\mathrm{TV}} , \]
hence $\Delta_{k+1} \le \Delta_k$.

The precise rate of convergence in the total variation metric was found by Anoussis and Gatzouras \cite[Theorem 4.1]{AG}. Let
\[ q = \max \left\{ \sup_{\pi \in \hat{G} \backslash \{ \pi_0 \}} \rho (\hat{\nu} (\pi )), \,\,\, \inf_{k \ge 1} \| (\nu^{*k})_{\mathrm{sing}} \|_{\mathrm{TV}}^{1/k} \right\} , \]
where $\rho(\hat{\nu} (\pi ))$ denotes the spectral radius of the matrix $\hat{\nu} (\pi )$. Then $\lim_{k \to \infty} \Delta_k^{1/k} = \inf_{k \ge 1} \Delta_k^{1/k}=q$; moreover, $q<1$ if and only if $\nu$ is adapted and strictly aperiodic, and $(\nu^{*k})_{\mathrm{abs}}\neq 0$ for some $k \ge 1$. Thus, as already stated in Theorem \ref{normconvergence}, whenever $\| \nu^{*k}-\mu \|_{\mathrm{TV}} \to 0$, the convergence is necessarily exponentially fast; more precisely, we have $q^k \le \Delta_k$ for every $k \ge 1$, and $\Delta_k \le (q+\varepsilon)^k$ for every $k \ge k_0(\nu, \varepsilon)$. Let $\Delta=1+2\sum_{k=1}^{\infty} \Delta_k$, and observe $1/(1-q) \le \Delta$.

We now give a more quantitative form of Theorem \ref{stronguniformityLIL}. For any $m \ge 1$ and $\varepsilon >0$ let $\varphi_{m,\varepsilon} (N) = N \left( \prod_{i=1}^{m-1} \log_i N \right) \left( \log_m N \right)^{1+\varepsilon}$, where $\log_1 N=\log N$ and $\log_i N = \log \log_{i-1} N$ denotes the $i$-fold iterated logarithm.
\begin{thm}\label{maintheoremLIL} Suppose that $\nu$ is adapted, and that $(\nu^{*k})_{\mathrm{abs}} \neq 0$ for some $k \ge 1$. Let $f: G \to \mathbb{R}$ be Borel measurable such that $\int_G f \, \mathrm{d} \mu =0$.
\begin{enumerate}
\item[(i)] If $\sup_{c \in G} \E |f(cX_1)|^p <\infty$ for some $1 \le p \le 2$, then for any $m \ge 1$ and $\varepsilon >0$
\begin{equation}\label{mainSLLN}
\lim_{N \to \infty} \frac{\sum_{k=1}^N f(S_k)}{\varphi_{m,\varepsilon}(N)^{1/p}} = 0 \quad \mathrm{a.s.}
\end{equation}
\item[(ii)] If $\sup_{c \in G} \E |f(cX_1)|^p <\infty$ for some $p>2$, then
\begin{equation}\label{mainLIL}
\limsup_{N \to \infty} \frac{\left| \sum_{k=1}^N f(S_k) \right|}{\sqrt{N \log \log N}} < \infty \quad \mathrm{a.s.}
\end{equation}
\end{enumerate}
\end{thm}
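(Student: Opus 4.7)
The key mixing input is the exponential decay $\Delta_k \le C q^k$ with $q<1$, available by Theorem~\ref{normconvergence} and the Anoussis--Gatzouras refinement once $\nu$ is also strictly aperiodic. Since only adaptedness is assumed in Theorem~\ref{maintheoremLIL}, the first task is a reduction to that case. If $\mathrm{supp}\,\nu$ lies in a coset of a proper closed normal subgroup $H \lhd G$, then $G/H$ is finite cyclic (the observation sketched right after Theorem~\ref{stronguniformityLIL}); splitting the indices $\{1,\ldots,N\}$ into the $d=|G:H|$ residue classes modulo $d$ turns $(S_k)$ into $d$ random walks on $H$ driven by $\nu^{*d}$, and iterating the argument on $H$ reduces to the case where $\nu$ itself is strictly aperiodic, so that $\Delta := 1 + 2\sum_{k \ge 1} \Delta_k < \infty$. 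I assume this from now on.

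\textbf{Covariance identity and $L^2$ bound.} After centring, $\int f\, d\mu = 0$, and for $j<k$ the invariance $\int_G f(xy)\, d\mu(y)=0$ yields
\[ \E[f(S_j)f(S_k)] = \int_G f(x) g_{k-j}(x)\, d\nu^{*j}(x), \qquad g_\ell(x) := \int_G f(xy)\, d(\nu^{*\ell}-\mu)(y). \]
For bounded $f$ one has $|g_\ell(x)|\le \|f\|_\infty \Delta_\ell$, hence $\E T_N^2 \ll \Delta \|f\|_\infty^2 N$ for $T_N := \sum_{k \le N} f(S_k)$. Under the hypothesis $M_p := \sup_c \E|f(cX_1)|^p < \infty$ with $p \ge 2$, Lyapunov gives $\sup_k \E f(S_k)^2 \le M_p^{2/p}$, and using the Jordan decomposition of $\nu^{*\ell}-\mu$ (whose positive and negative parts each have mass $\Delta_\ell/2$) together with Cauchy--Schwarz upgrades the bound to $\E T_N^2 \ll_\nu M_p^{2/p} N$.

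\textbf{Part (i).} My plan is Chebyshev plus Borel--Cantelli along $N_i = 2^i$, polished by a Rademacher--Menshov / G\'al--Koksma maximal inequality to cover intermediate $N$. Chebyshev with the $L^p$ bound $\E|T_N|^p \ll_\nu N$ gives $\Pr\bigl(|T_{N_i}| > \varphi_{m,\varepsilon}(N_i)^{1/p}\bigr) \ll N_i/\varphi_{m,\varepsilon}(N_i)$, and iterated Cauchy condensation shows that $\sum_i N_i/\varphi_{m,\varepsilon}(N_i) < \infty$ precisely because of the $(\log_m)^{1+\varepsilon}$ factor built into $\varphi_{m,\varepsilon}$. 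The $L^p$ bound for $p=2$ is exactly the $L^2$ estimate above; for $1\le p<2$ I would establish a Marcinkiewicz--Zygmund-type inequality $\E|T_N|^p \ll_\nu N$ by truncating $f$ at level $N^{1/p}$, applying the $L^2$ bound to the truncated part, and handling the tail via the Borel--Cantelli consequence $\sum_k \Pr(|f(S_k)|>k^{1/p}) < \infty$ of a layer-cake decomposition of $M_p$.

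\textbf{Part (ii).} The hypothesis $\sup_c \E|f(cX_1)|^p < \infty$ with $p>2$ implies the $L^2$ bound $\E T_N^2 \ll_\nu N$ by Lyapunov; I would then run a Bernstein-style blocking argument in the spirit of Philipp and Stout. Partition $\{1,\ldots,N\}$ into alternating long blocks of length $L_i$ and short gaps of length $\ell_i$ of order $\log N$, chosen so that $\Delta_{\ell_i}$ is summably small. The contribution of the gaps is absorbed by the $L^2$ bound together with Borel--Cantelli; the exponential mixing lets the long-block sums be coupled with an i.i.d.\ surrogate (via Berbee's coupling lemma, or a direct Fourier argument leveraging the spectral-radius bound on $\hat\nu(\pi)$) having the same marginals and total variance $\Theta(N)$. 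The classical Hartman--Wintner LIL applied to the surrogate gives \eqref{mainLIL} after undoing the coupling.

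\textbf{Main obstacle.} The most delicate step will be the Marcinkiewicz--Zygmund-type bound for $1 \le p < 2$ with unbounded $f$: the covariance estimate via $\Delta_\ell$ only controls bounded test functions, while the desired linear-in-$N$ bound must avoid picking up $\log N$ factors that would corrupt the sharp $\varphi_{m,\varepsilon}$ normalization. Equivalently, a moment inequality for weakly dependent sums valid in the full range $1 \le p < 2$ and uniform in the compact group $G$ will be required, and it is here that I expect most of the technical work to sit.
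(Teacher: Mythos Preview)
Your global architecture---reduce to strictly aperiodic $\nu$, block into long and short pieces, couple to an independent surrogate, then invoke classical limit theorems---is exactly the paper's. The substantive difference is in how the moment bounds are obtained, and there the paper uses a device you do not mention: it introduces an auxiliary uniformly distributed variable $U$ independent of the walk and first proves all the $L^p$ estimates for $\sum_{k\le N} f(US_k)$ rather than for $\sum_{k\le N} f(S_k)$. Because $US_k$ is stationary with marginal $\mu$, one gets $\|\sum_{k\le N} f(US_k)\|_2^2 \le \|f\|_2^2 \Delta N$ from the covariance identity (Proposition~\ref{fuskvariance}) and trivially $\|\sum_{k\le N} f(US_k)\|_1 \le \|f\|_1 N$; Riesz--Thorin interpolation between these endpoints then yields $\|\sum_{k\le N} f(US_k)\|_p \ll \|f\|_p(\Delta N)^{1/p}$ for all $1\le p\le 2$ (Proposition~\ref{highermoments}). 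The transfer back to $\sum f(S_k)$ is via the maximal total-variation coupling (Strassen's theorem), and the hypothesis $\sup_c \E|f(cX_1)|^p<\infty$ is used \emph{only} to control the coupling error (Lemma~\ref{approxerrorlemma}). This completely bypasses your ``main obstacle'': no truncation or Marcinkiewicz--Zygmund argument is needed, and no $\log N$ factor appears.

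Your direct Cauchy--Schwarz upgrade of the covariance bound also has a gap in noncommutative $G$. After Jordan-decomposing $\nu^{*\ell}-\mu$ and applying Cauchy--Schwarz you are led to control $\E|f(S_j y)|^2$ uniformly in $y$; conditioning on $S_{j-1}=c$ this becomes $\E|f(cX_1 y)|^2$, which is not of the form $\E|f(c'X_1)|^2$ covered by $M_p$. The $U$-trick sidesteps this entirely, since $\E f(US_j)f(US_k)$ reduces to $\int\!\!\int f(u)f(ux)\,d\mu(u)\,d\nu^{*(k-j)}(x)$, where the inner integral is bounded by $\|f\|_2^2$ pointwise in $x$. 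For part~(ii) the paper actually deduces \eqref{mainLIL} from the stronger Wiener approximation (Theorem~\ref{maintheoremWIENER}), via Strassen's almost sure invariance principle applied to the independent block sums; your Hartman--Wintner route would also work but gives less.
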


\begin{remark} Note that $\sup_{c \in G} \E |f(cX_1)|^p <\infty$ clearly implies $f \in L^p(G)$. To mention a sufficient condition, suppose that $\nu$ is absolutely continuous with density $\frac{\mathrm{d}\nu}{\mathrm{d}\mu}$. If there exists a H\"older conjugate pair $r,s \in [1,\infty ]$, $1/r+1/s=1$ such that $f \in L^{pr}(G)$ and $\frac{\mathrm{d}\nu}{\mathrm{d}\mu} \in L^s(G)$, then $\sup_{c \in G} \E |f(cX_1)|^p \le \| f \|_{pr}^p \| \frac{\mathrm{d}\nu}{\mathrm{d}\mu} \|_s < \infty$.
\end{remark}

Under the extra condition that $\nu$ is strictly aperiodic, we will approximate $\sum_{k=1}^N f(S_k)$ by a sum of independent random variables. Following Strassen \cite{STR}, we can even construct an almost sure approximation by a Wiener process. To state the precise form of this result, let us introduce the following technical definition. Given a function $E(t)$ positive on $(t_0, \infty)$ for some $t_0$, we shall say that two stochastic processes $Y(t)$ and $Z(t)$ in the Skorokhod space $D[0,\infty )$, possibly defined on different probability spaces, are $o(E(t))$-equivalent if there exist finitely many processes $Y_1(t), Y_2(t), \dots, Y_m(t)$ in $D[0,\infty )$ such that $Y_1 (t)=Y(t)$, $Y_m(t)=Z(t)$, and for all $1 \le i \le m-1$ one of the following hold:
\begin{enumerate}
\item[(i)] The processes $Y_i (t)$ and $Y_{i+1}(t)$, possibly defined on different probability spaces, have the same distribution.
\item[(ii)] The processes $Y_i(t)$ and $Y_{i+1}(t)$ are defined on the same probability space, and $\lim_{t \to \infty} (Y_i(t)-Y_{i+1}(t))/E(t) =0$ a.s.
\end{enumerate}
Roughly speaking (ignoring the different probability spaces), $Y(t)$ and $Z(t)$ being $o(E(t))$-equivalent thus means $Y(t)=Z(t)+o(E(t))$ a.s. Given $f \in L^2(G)$ with $\int_G f \, \mathrm{d} \mu =0$, let
\begin{equation}\label{Cf}
C(f, \nu )= \mathbb{E}f(U)^2+2\sum_{k=1}^{\infty} \mathbb{E} f(U) f(US_k),
\end{equation}
where $U$ is a uniformly distributed $G$-valued random variable, independent of $X_1, X_2, \ldots$. As we shall see, the series in \eqref{Cf} is absolutely convergent, and \mbox{$C(f, \nu) \ge 0$.}
\begin{thm}\label{maintheoremWIENER} Suppose that $\nu$ is adapted and strictly aperiodic, and that $(\nu^{*k})_{\mathrm{abs}} \neq 0$ for some $k \ge 1$. Let $f: G \to \mathbb{R}$ be Borel measurable such that $\int_G f \, \mathrm{d} \mu =0$. If $\sup_{c \in G} \E |f(cX_1)|^{2+\delta} <\infty$ for some $0<\delta <2$, then the processes $\sum_{k \le t} f(S_k)$ and $\sqrt{C(f,\nu )}W(t)$ are $o(t^{1/2-\delta /20})$-equivalent, where $W(t)$ is a standard Wiener process.
\end{thm}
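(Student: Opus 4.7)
The plan is to reduce $\sum_{k=1}^N f(S_k)$ to a sum of i.i.d.\ random variables via a Bernstein block decomposition combined with a maximal coupling, and then invoke the Koml\'os--Major--Tusn\'ady strong invariance principle for i.i.d.\ variables with finite $(2+\delta )$-th moment. A preliminary step is to check that $C(f,\nu )$ in \eqref{Cf} is a well-defined nonnegative finite constant. Using $\int_G f \, \mathrm{d}\mu =0$ and the invariance of $\mu$ one rewrites
\[ \E f(U) f(U S_k) = \int_G f(u) \int_G f(us) \, \mathrm{d}(\nu^{*k}-\mu)(s) \, \mathrm{d}\mu (u). \]
After truncating $f$ at level $M$ the inner integral is $O(M\Delta_k) = O(M(q+\varepsilon )^k)$ by Theorem~\ref{normconvergence}; the $L^{2+\delta}$ hypothesis handles the truncation error in the outer integral, so the series in \eqref{Cf} converges absolutely. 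Nonnegativity of $C(f,\nu )$ will emerge from writing it as a limit of variances of block sums in the next step.

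Next, fix a block length $L=\lfloor N^{a}\rfloor$ with $a\in (1/2,1)$ and a gap length $h=\lceil A\log N\rceil$ with $A$ sufficiently large, and partition $\{1,\ldots ,N\}$ into $n\approx N/L$ alternating big blocks of length $L$ separated by gaps of length $h$. The gap contribution has $O(Nh/L)$ summands, which by Markov's inequality and a dyadic Borel--Cantelli argument is $o(N^{1/2-\delta /20})$ a.s. For the big-block contribution, the crucial point is that at the end of each gap the conditional distribution of the current position, given the past, is within $\Delta_h \le (q+\varepsilon )^h$ of $\mu$ in total variation. A maximal coupling thus lets one replace it by an independent uniform random variable $U_j$, with per-block failure probability at most $\Delta_h /2$. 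Taking $A$ large makes the total failure probability summable in $N$, so Borel--Cantelli ensures the coupling succeeds eventually. After coupling, the big-block sums become i.i.d.\ random variables $\tilde B_j \overset{d}{=} \sum_{k=1}^L f(U S_k)$ with $U$ uniform and independent of the walk. A direct computation then gives $\E\tilde B_j =0$, $\E\tilde B_j^2 = C(f,\nu )L + O(1)$ from the previous step, and $\| \tilde B_j \|_{2+\delta} = O(L^{1/2})$ via a Rosenthal-type inequality for the exponentially mixing stationary sequence $(f(U S_k))_{k\ge 0}$.

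Applying the Koml\'os--Major--Tusn\'ady strong approximation to the normalized i.i.d.\ sequence $\tilde B_j /\sqrt L$ produces a Wiener process $W$ with
\[ \sum_{j=1}^n \tilde B_j - \sqrt{C(f,\nu )L}\, W(n) = o\!\left( \sqrt L \, n^{1/(2+\delta )}\right) \quad \text{a.s.} \]
Brownian scaling $\sqrt L\, W(n) \overset{d}{=} W(Ln)$ together with $Ln = N - O(Nh/L)$ lets one replace $W(Ln)$ by $W(N)$ at cost $O(\sqrt{(Nh/L)\log\log N})$, which is comfortably below the target. Choosing $a$ slightly below $4/5 - \delta /10$ balances the KMT remainder $L^{1/2-1/(2+\delta )} N^{1/(2+\delta )}$ against the gap and replacement errors, yielding the total bound $o(N^{1/2-\delta /20})$ for $0<\delta <2$. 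The hard part will be the coupling construction itself: one must produce genuinely independent $\tilde B_j$'s on a single probability space, bound their $(2+\delta )$-th moments uniformly in $L$ despite the unboundedness of $f$ (this is where the hypothesis $\sup_{c\in G}\E |f(cX_1)|^{2+\delta}<\infty$ is essential), and establish $\E\tilde B_j^2 = C(f,\nu )L + O(1)$ with a usable quantitative remainder. Once the i.i.d.\ reduction is in place, the Koml\'os--Major--Tusn\'ady theorem is invoked as a black box.
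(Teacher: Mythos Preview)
Your overall strategy---Bernstein blocking plus a total-variation (maximal) coupling across the gaps to manufacture independent block sums, then an invariance principle for those sums---is the same as the paper's. The serious gap is that your block parameters $L=\lfloor N^{a}\rfloor$ and $h=\lceil A\log N\rceil$ depend on the terminal time $N$. The theorem is a process-level statement: one must exhibit a \emph{single} Wiener process $W$ such that $\sum_{k\le t}f(S_k)$ and $\sqrt{C(f,\nu)}\,W(t)$ are $o(t^{1/2-\delta/20})$-equivalent as $t\to\infty$. Your construction, as written, produces for each fixed $N$ a different i.i.d.\ block sequence (the law of $\tilde B_j$ depends on $L=L(N)$) and hence, via KMT, a different Wiener process $W_N$. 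Proving
\[
\Big|\sum_{k=1}^N f(S_k)-\sqrt{C(f,\nu)}\,W_N(N)\Big|=o\big(N^{1/2-\delta/20}\big)
\]
for each $N$ separately does not establish the process equivalence, and there is no cheap way to splice the $W_N$'s into one process. The ``dyadic Borel--Cantelli'' you invoke for the gap contribution cannot rescue this: the difficulty is not controlling a single error term over dyadic scales but the identity of the approximating process itself, which changes with $N$.

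The paper avoids this by fixing once and for all a decomposition $H_1,J_1,H_2,J_2,\dots$ of all of $\mathbb{N}$, with $|H_i|=\Theta(\log i)$ and $|J_i|=\Theta\big(i^{\delta/(4+2\delta)}\big)$ growing with the \emph{index} $i$ rather than with any terminal time. The resulting approximating block sums $Y_i^*$ are then independent but \emph{not} identically distributed, so KMT for i.i.d.\ sequences is unavailable; the paper instead applies Strassen's almost-sure invariance principle for independent non-identically distributed summands under a Lyapunov-type condition $\sum_i \E|Y_i^*|^{2+\delta}/V_i^{\theta(1+\delta/2)}<\infty$. Your moment computations ($\E\tilde B_j=0$, $\E\tilde B_j^2=C(f,\nu)L+O(1)$, $\|\tilde B_j\|_{2+\delta}=O(\sqrt L)$) are exactly what is needed there, so the fix is to abandon the $N$-dependent blocking, let block lengths grow with the block index, and replace KMT by Strassen's theorem.
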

The almost sure approximation by a Wiener process yields even more precise asymptotics than those in Theorem \ref{maintheoremLIL}; for instance, it shows that for strictly aperiodic $\nu$ the value of the limsup in \eqref{mainLIL} is $\sqrt{2 C(f, \nu )}$. The almost sure asymptotics as well as the limit distribution of continuous functionals of the process $\sum_{k \le t} f(S_k)$ also follow. Instead of the random step functions $\sum_{k \le t} f(S_k)$, we could have also used the piecewise linear functions $\sum_{k \le \lfloor t \rfloor} f(S_k) + \left( t-\lfloor t \rfloor \right) f(S_{\lfloor t \rfloor +1})$. In that case the $o(t^{1/2-\delta /20})$-equivalence holds in the space $C[0,\infty)$ as well.

In Theorem \ref{maintheoremWIENER} in general we only know $C(f, \nu) \ge 0$; in the case $C(f, \nu )=0$ the result simply states that $\sum_{k \le t} f(S_k)=o(t^{1/2-\delta /20})$ a.s. The natural question whether $C(f,\nu )>0$ is surprisingly delicate. We shall prove a necessary and sufficient condition in terms of irreducible unitary representations of $G$, see Proposition \ref{Cfalternativeprop} below. As this condition is rather cumbersome to use, we also give simpler criteria to ensure $C(f,\nu )>0$. In particular, we will show that under mild technical assumptions (e.g.\ $f$ is a class function or $\nu$ is a central measure) we have $C(f,\nu )=0$ if and only if $f=0$ $\mu$-a.e. We work out the details in Section \ref{section3}.

It clearly follows from Theorem \ref{maintheoremWIENER} that $N^{-1/2} \sum_{k=1}^N f(S_k)$ has a (possibly degenerate) Gaussian limit distribution. Under slightly weaker assumptions than those in Theorem \ref{maintheoremWIENER} we also prove a Lyapunov-type bound on the remainder term in the central limit theorem. Let $\Phi(x)=\int_{-\infty}^x (2 \pi)^{-1/2} e^{-t^2/2} \, \mathrm{d}t$ denote the standard normal distribution function.
\begin{thm}\label{maintheoremCLT} Suppose that $\nu$ is adapted and strictly aperiodic, and that $(\nu^{*k})_{\mathrm{abs}} \neq 0$ for some $k \ge 1$. Let $f: G \to \mathbb{R}$ be Borel measurable such that $\int_G f \, \mathrm{d} \mu =0$, and assume $f \in L^{2+\delta}(G)$ for some $0<\delta \le 1$ and $\sup_{c \in G} \E f(cX_1)^2 < \infty$.
\begin{enumerate}
\item[(i)] If $C(f,\nu) >0$, then for any integer $N \ge N_0(f,\nu,\delta)$
\begin{equation}\label{BerryEsseen}
\sup_{x \in \mathbb{R}} \left| \Pr \left( \frac{\sum_{k=1}^N f(S_k)}{\sqrt{C(f,\nu ) N}} <x \right) - \Phi (x) \right| \ll K \frac{\log^{\delta /(1+\delta)}N}{N^{\delta/( 2+2 \delta )}},
\end{equation}
where $K=\Delta \left( \| f \|_{2+\delta} /\sqrt{C(f, \nu)} \right)^{(2+\delta)/(1+\delta)}$.
\item[(ii)] If $C(f,\nu)=0$, then $N^{-1/2} \sum_{k=1}^N f(S_k) \to 0$ in $L^2$.
\end{enumerate}
\end{thm}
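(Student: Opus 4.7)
The plan is to reduce both parts to a martingale central limit theorem via Gordin's method, leveraging the exponentially fast mixing $\Delta_k \ll q^k$ with $q<1$ provided by Anoussis--Gatzouras (quoted before Theorem \ref{maintheoremLIL}). Let $Af(x) = \int_G f(xy)\,d\nu(y) = \E[f(xX_1)]$ be the transition operator, and set $\mathcal F_k = \sigma(X_1,\dots,X_k)$. Under our hypotheses $\|A^k f\|_2$ decays exponentially (via Parseval and $\rho(\hat\nu(\pi)) \le q$ for $\pi\neq\pi_0$), so the series
$$g(x) := \sum_{k=1}^\infty A^k f(x)$$
converges in $L^2(G)$ and satisfies $Af + Ag = g$. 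The sequence
$$M_k := f(S_k) + g(S_k) - g(S_{k-1})$$
is therefore a martingale difference for $\mathcal F_k$ (since $\E[f(S_k)+g(S_k)\mid\mathcal F_{k-1}] = Af(S_{k-1})+Ag(S_{k-1}) = g(S_{k-1})$), and telescoping yields
$$\sum_{k=1}^N f(S_k) = \sum_{k=1}^N M_k + g(S_0) - g(S_N),$$
with coboundary bounded in $L^2$ uniformly in $N$.

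Next I would identify $C(f,\nu)$ as the asymptotic variance. In the stationary version of the walk ($S_0 = U \sim \mu$ independent of the $X_k$, so $S_k \sim \mu$ for every $k$) one computes $\E f(S_j)f(S_k) = \langle f, A^{|k-j|}f\rangle_{L^2(\mu)}$, hence
$$\E\Bigl(\sum_{k=1}^N f(S_k)\Bigr)^2 = N\|f\|_2^2 + 2\sum_{d=1}^{N-1}(N-d)\langle f, A^d f\rangle.$$
Exponential decay of $\|A^d f\|_2$ makes the series in \eqref{Cf} absolutely convergent, this variance equals $N C(f,\nu) + O(1)$, and the non-stationary start $S_0 = e$ introduces only an $O(1)$ correction via the TV bound $\|\nu^{*j}-\mu\|_{\mathrm{TV}}\ll q^j$. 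For part (i), apply a Heyde--Brown / Haeusler Berry--Esseen bound for martingale differences to $\sum_k M_k/\sqrt{NC(f,\nu)}$: the rate is controlled by the normalized Lyapunov sum $N^{-1-\delta/2}\sum_k\E|M_k|^{2+\delta}$ and the conditional-variance fluctuation $N^{-1-\delta/2}\E|V_N^2 - NC(f,\nu)|^{1+\delta/2}$, both of order $N^{-\delta/2}$ times appropriate powers of $\|f\|_{2+\delta}/\sqrt{C(f,\nu)}$ and $\Delta$; raising to the power $1/(1+\delta)$ intrinsic to the martingale Berry--Esseen yields the rate $N^{-\delta/(2+2\delta)}$ with the stated constant $K$. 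Part (ii) is immediate: when $C(f,\nu) = 0$, the variance identity above gives $\E(\sum_{k=1}^N f(S_k))^2 = o(N)$, so $N^{-1/2}\sum_{k=1}^N f(S_k)\to 0$ in $L^2$.

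The main obstacle is controlling the $L^{2+\delta}$ norms of $g$ and of the $M_k$: the hypothesis provides only $\sup_c \E f(cX_1)^2 < \infty$, not its $(2+\delta)$-analogue, so one cannot iterate to control $\sup_c \E|f(cS_k)|^{2+\delta}$ uniformly, and the $L^{2+\delta}$-operator norm of $A^k$ does not in general admit exponential decay without further input. The workaround is the truncation $f = \tilde f_T + r_T$ with $\tilde f_T = f\mathbf 1_{|f|\le T}-\int f\mathbf 1_{|f|\le T}\,d\mu$ bounded and mean-zero (so that $|A^k \tilde f_T(x)| \le T\Delta_k$ decays exponentially) and $r_T$ small in $L^{2+\delta}$ as $T\to\infty$. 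Optimizing $T$ as a function of $N$ in the Berry--Esseen bound is precisely what produces both the logarithmic correction $(\log N)^{\delta/(1+\delta)}$ and the explicit form of $K = \Delta(\|f\|_{2+\delta}/\sqrt{C(f,\nu)})^{(2+\delta)/(1+\delta)}$.
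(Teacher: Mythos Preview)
Your approach is genuinely different from the paper's. The paper does \emph{not} use Gordin's martingale decomposition. Instead it partitions $\{1,\dots,N\}$ into alternating ``long'' blocks $J_i$ and short ``gap'' blocks $H_i$ with $|H_i|\asymp\Delta\log N$, and uses a Strassen coupling of $\nu^{*|H_i|}$ with $\mu$ (built earlier in Section~\ref{approxsection}) to replace the block sums $\sum_{k\in J_i}f(S_k)$ by genuinely \emph{independent} variables $Y_i^*$ with the same distribution as $\sum_{k=1}^{|J_i|}f(US_k)$. The classical Lyapunov Berry--Esseen theorem for independent summands is then applied to $\sum_i Y_i^*$, with the moment inputs coming from Proposition~\ref{highermoments}. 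The rate $N^{-\delta/(2+2\delta)}$ arises as $R^{-\delta/2}$ with $R\asymp N^{1/(1+\delta)}$ blocks, and the logarithmic factor and the constant $K$ come from the gap length $|H_i|\asymp\Delta\log N$ feeding into the size of the $Z_i^*$ contribution. Your part~(ii) is fine and essentially matches the paper (it is Corollary~\ref{momentscorollary}(i)).

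For part~(i), however, there is a real gap in your quantitative claim. The Haeusler martingale Berry--Esseen bound gives an error of order $(L_N+N_N)^{1/(3+\delta)}$, where $L_N$ is the normalized $(2{+}\delta)$-Lyapunov sum and $N_N$ controls the conditional-variance fluctuation; the exponent you invoke, $1/(1+\delta)$, is not the one in those theorems. Even granting your estimates $L_N,N_N=O(N^{-\delta/2})$, you would obtain only $N^{-\delta/(6+2\delta)}$, strictly worse than the stated $N^{-\delta/(2+2\delta)}$ (e.g.\ $N^{-1/8}$ versus $N^{-1/4}$ at $\delta=1$). Truncation does not recover the loss: with $|\tilde f_T|\le T$ one has $|M_k^T|\ll T\Delta$, so $L_N\asymp (T\Delta)^{\delta}(NC)^{-\delta/2}$, while the tail $r_T$ contributes a perturbation governed by $\|r_T\|_2\le T^{-\delta/2}\|f\|_{2+\delta}^{(2+\delta)/2}$; optimizing $T$ still leaves you short of the target rate, and there is no mechanism in this scheme that produces the specific logarithmic factor $\log^{\delta/(1+\delta)}N$ or the precise constant $K=\Delta(\|f\|_{2+\delta}/\sqrt{C(f,\nu)})^{(2+\delta)/(1+\delta)}$, both of which trace back in the paper to the choice $|H_i|\asymp\Delta\log N$ in the block--coupling construction. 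A smaller point: your justification that $\|A^k f\|_2$ decays exponentially ``via Parseval and $\rho(\hat\nu(\pi))\le q$'' is not quite right, since the spectral radius bound is not uniform in $\pi$ at the level of operator norms; the correct argument is by duality, using $|\langle h,A^k f\rangle|\le\|h\|_2\|f\|_2\Delta_k$ (apply \eqref{maintool} to $x\mapsto\int h(u)f(ux)\,d\mu(u)$), which does give $\|A^k f\|_2\le\|f\|_2\Delta_k$.
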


\begin{remark} If $f \in L^3(G)$, then the right hand side of \eqref{BerryEsseen} becomes $KN^{-1/4} \log^{1/2}N$ with $K=\Delta \| f \|_3^{3/2}/C(f,\nu)^{3/4}$. We mention that if $f \in L^4(G)$, then here $\| f \|_3$ can be replaced by $\| f \|_2$ (see the end of Section \ref{section5.2}). As we will see in Proposition \ref{specialcasesprop} below, if $f$ is a class function or $\nu$ is a central measure, then $C(f , \nu) \ge \| f \|_2^2 /(2\Delta)$. Thus in this case $K \le 2 \Delta^{7/4}$, so the right hand side of \eqref{BerryEsseen} does not depend on $f$. ($N_0(f,\nu,\delta)$ always depends on $f$, however.)
\end{remark}

\section{Moment estimates}\label{section3}

Throughout this section we assume that $\nu$ is adapted and strictly aperiodic, and that $(\nu^{*k})_{\mathrm{abs}} \neq 0$ for some $k \ge 1$. Further, we fix a Borel measurable function $f: G \to \mathbb{R}$ such that $\int_G f \, \mathrm{d} \mu =0$, and a uniformly distributed $G$-valued random variable $U$ independent of $X_1, X_2, \ldots$. We now prove moment estimates for the modified sum $\sum_{k=1}^N f(US_k)$. In Section \ref{approxsection} we shall give the counterparts of these estimates for shifted sums $\sum_{k=M+1}^{M+N} f(S_k)$.

For every nonempty, finite interval of positive integers $J \subset \mathbb{N}$ let $S_J=\prod_{j \in J}X_j$. Note that $S_J$ has distribution $\nu^{*|J|}$, hence by definition for any bounded, Borel measurable function $g: G \to \mathbb{R}$
\begin{equation}\label{maintool}
\left| \mathbb{E} g(S_J) - \mathbb{E} g(U) \right| = \left| \int_G g \, \mathrm{d}(\nu^{*|J|}-\mu) \right| \le \sup_G |g| \cdot \Delta_{|J|}.
\end{equation}

\begin{prop}\label{fuskvariance} Assume $f \in L^2(G)$. The series in \eqref{Cf} is absolutely convergent, and $0 \le C(f,\nu) \le \| f \|_2^2 \Delta$. Further, for any integer $N \ge 1$
\[ \begin{split} \E \left( \sum_{k=1}^{N} f(US_k) \right)^2 &= C(f,\nu ) N + O_{\nu} \left( \| f \|_2^2 \right) \\ &\le \| f \|_2^2 \Delta N. \end{split} \]
\end{prop}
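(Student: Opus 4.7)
The plan is to compute $\E\bigl(\sum_{k=1}^N f(US_k)\bigr)^2$ by expanding the square, and to reduce each cross term to a single quantity of the form $\E f(U)f(US_j)$ that can be estimated directly via \eqref{maintool}. The whole argument rests on the observation that $US_k$ is uniformly distributed on $G$ for every $k \ge 0$: since $U$ is uniform and independent of $S_k$, for any Borel $B$ one has $\Pr(US_k \in B) = \E \mu(BS_k^{-1}) = \mu(B)$ by right invariance. In particular $\E f(US_k)^2 = \|f\|_2^2$, and for $1 \le k < \ell$, writing $V = US_k$ and $T = X_{k+1}\cdots X_\ell$ with $V$ uniform and independent of $T \sim \nu^{*(\ell-k)}$, the identity $f(US_\ell) = f(VT)$ gives $\E f(US_k)f(US_\ell) = \E f(U)f(US_{\ell-k})$.

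The key inequality is the bound $|\E f(U) f(US_j)| \le \|f\|_2^2 \Delta_j$. Define the autocorrelation
\[
h(s) = \int_G f(x) f(xs) \, \mathrm{d}\mu(x).
\]
Cauchy--Schwarz gives $\sup_G |h| \le \|f\|_2^2$, and Fubini combined with right invariance of $\mu$ and $\int f \, \mathrm{d}\mu = 0$ yields $\int_G h \, \mathrm{d}\mu = 0$. Conditioning on $S_j$ and using uniformity of $U$,
\[
\E f(U) f(US_j) = \int_G h \, \mathrm{d}\nu^{*j} = \int_G h \, \mathrm{d}(\nu^{*j} - \mu),
\]
so \eqref{maintool} delivers $|\E f(U) f(US_j)| \le \|f\|_2^2 \Delta_j$. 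Because $\Delta_j$ decays exponentially under our standing hypotheses (the quantitative form of Theorem \ref{normconvergence}), $\sum_j \Delta_j < \infty$, so the series in \eqref{Cf} is absolutely convergent with $|C(f,\nu)| \le \|f\|_2^2 (1 + 2\sum_j \Delta_j) = \|f\|_2^2 \Delta$.

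Expanding the square and using the reduction above,
\[
\E\left(\sum_{k=1}^N f(US_k)\right)^2 = N\|f\|_2^2 + 2\sum_{j=1}^{N-1}(N-j)\,\E f(U) f(US_j).
\]
The crude bound $N - j \le N$ together with $|\E f(U)f(US_j)| \le \|f\|_2^2 \Delta_j$ at once gives the upper estimate $\|f\|_2^2 \Delta N$. To obtain the asymptotic $C(f,\nu)N + O_\nu(\|f\|_2^2)$, I would rewrite the double sum as
\[
2N\sum_{j=1}^{\infty}\E f(U)f(US_j) \;-\; 2N\sum_{j \ge N}\E f(U)f(US_j) \;-\; 2\sum_{j=1}^{N-1} j\,\E f(U)f(US_j);
\]
the first piece together with $N\|f\|_2^2$ equals $C(f,\nu)N$, and the remaining two pieces are controlled by $N\sum_{j \ge N}\Delta_j$ and $\sum_{j=1}^\infty j \Delta_j$ respectively, both of which are $O_\nu(1)$ by exponential decay of $\Delta_j$.

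Finally, nonnegativity $C(f,\nu) \ge 0$ is automatic: the left-hand side $\E(\sum_{k=1}^N f(US_k))^2$ is nonnegative for all $N$, and dividing $C(f,\nu)N + O_\nu(\|f\|_2^2) \ge 0$ by $N$ and letting $N \to \infty$ forces the inequality. The only genuinely substantive step is the bound on $\E f(U)f(US_j)$; everything else is bookkeeping with geometric sums, and the exponential rate from Theorem \ref{normconvergence} is what makes the error term stay bounded independently of $N$.
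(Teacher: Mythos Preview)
Your proof is correct and follows essentially the same approach as the paper: define the autocorrelation $h(s)=\int_G f(x)f(xs)\,\mathrm{d}\mu(x)$ (the paper calls it $g$), bound $|\E f(U)f(US_j)|\le\|f\|_2^2\Delta_j$ via \eqref{maintool}, reduce all cross terms using uniformity and independence of $US_k$, and extract the main term $C(f,\nu)N$ with error controlled by $\sum_j j\Delta_j$ and $N\sum_{j\ge N}\Delta_j$. The paper's write-up is nearly identical, including the deduction of $C(f,\nu)\ge 0$ from the second claim.
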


\begin{proof} Let $A_k=\E f(U)f(US_k)$. Since $U$ is independent of $S_k$, we have
\begin{equation}\label{EfUfUSk}
A_k = \int_G \int_G f(u)f(ux) \, \mathrm{d} \mu (u) \mathrm{d} \nu^{*k}(x).
\end{equation}
The function $g(x)=\int_G f(u)f(ux) \, \mathrm{d}\mu (u)$ satisfies $\int_G g \, \mathrm{d}\mu=0$ and $\sup_G |g| \le \| f \|_2^2$. Applying \eqref{maintool} to $g$ we thus obtain $\left| A_k \right| \le \| f \|_2^2 \Delta_k$. Since $\Delta_k \to 0$ exponentially fast, the series in \eqref{Cf} is absolutely convergent. As $\E f(U)^2 =\| f \|_2^2$, we have $C(f,\nu) \le \| f \|_2^2 \Delta$. Finally, $C(f,\nu ) \ge 0$ will follow from the second claim.

Expanding the square we have
\begin{equation}\label{fusksum}
\E \left( \sum_{k=1}^{N} f(US_k) \right)^2 = \sum_{k=1}^N \E f(US_k)^2 + 2 \sum_{1 \le k< \ell \le N} \E f(US_k)f(US_{\ell}) .
\end{equation}
Let us write $US_{\ell}=US_kS_{[k+1,\ell]}$. Since $\mu * \nu^{*k}=\mu$, the variable $US_k$ is uniformly distributed on $G$ and independent of $S_{[k+1,\ell]}$; moreover, $S_{[k+1,\ell]} \overset{d}{=} S_{\ell-k}$. Thus $\E f(US_k)^2=\E f(U)^2$ and $\E f(US_k)f(US_{\ell})=\E f(U)f(US_{\ell -k})$, so \eqref{fusksum} simplifies as
\[ \begin{split} \E \left( \sum_{k=1}^{N} f(US_k) \right)^2 &= N \E f(U)^2+2\sum_{1 \le k< \ell \le N} \E f(U)f(US_{\ell -k}) \\ &= N \E f(U)^2 +2 \sum_{d=1}^{N-1} (N-d) A_d \\ &= C(f,\nu ) N +O \left( \sum_{d=1}^{N-1} d |A_d| + N\sum_{d=N}^{\infty} |A_d|  \right) \\ &\le N \E f(U)^2 +2N \sum_{d=1}^{N-1} |A_d| . \end{split} \]
The second claim thus follows from $|A_d|\le \| f \|_2^2 \Delta_d$ and the fact that $\Delta_d \to 0$ exponentially fast.
\end{proof}

We now study the question whether the normalizing factor $C(f,\nu )$ in the variance is zero or positive. To this end, we derive an alternative formula for $C(f,\nu)$ in the form of an infinite series with nonnegative terms. Next, we will consider the special case when $f$ is a class function or $\nu$ is a central measure. As we shall see, the behavior of $C(f,\nu )$ then simplifies considerably, allowing for effective lower bounds.

\begin{prop}\label{Cfalternativeprop} Assume $f \in L^2(G)$. We have
\begin{equation}\label{Cfalternative}
C(f,\nu )= \sum_{\pi \in \hat{G} \backslash \{ \pi_0 \}} d_{\pi} \mathrm{tr} \left( \hat{f}(\pi )^* \hat{f}(\pi ) B_{\nu}(\pi ) \right) ,
\end{equation}
where $B_{\nu}(\pi)=I_{d_{\pi}} + (I_{d_{\pi}} - \hat{\nu} (\pi ))^{-1} \hat{\nu} (\pi ) + (I_{d_{\pi}} - \hat{\nu} (\pi )^*)^{-1} \hat{\nu} (\pi )^*$ and $I_{d_{\pi}}$ denotes the $d_{\pi} \times d_{\pi}$ identity matrix. The series in \eqref{Cfalternative} has nonnegative terms and is convergent. In particular, $C(f,\nu)>0$ if and only if at least one term in \eqref{Cfalternative} is nonzero.
\end{prop}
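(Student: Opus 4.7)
The plan is to derive \eqref{Cfalternative} by Fourier inversion and establish term-by-term nonnegativity through an explicit matrix identity. The starting point is $A_k := \E f(U)f(US_k) = \langle f, T_\nu^k f\rangle_{L^2}$, where $T_\nu$ is the contraction on $L^2(G)$ defined by $(T_\nu h)(u) := \int_G h(ux)\,\mathrm{d}\nu(x)$. A change of variables $y=ux$ and Fubini give $\widehat{T_\nu h}(\pi) = \hat\nu(\pi)^*\hat h(\pi)$, so under the Peter--Weyl isomorphism $T_\nu$ acts on the $\pi$-isotypic component as left multiplication by $\hat\nu(\pi)^*$. Iterating and applying Parseval yields
\[ A_k = \sum_{\pi\in\hat{G}\setminus\{\pi_0\}} d_\pi\mathrm{tr}\bigl(\hat f(\pi)\hat f(\pi)^*\hat\nu(\pi)^k\bigr), \]
the trivial representation dropping out because $\hat f(\pi_0) = \int_G f\,\mathrm{d}\mu = 0$; this $\pi$-series is absolutely convergent for each fixed $k$ since $\|\hat\nu(\pi)\|_{\mathrm{op}}\le 1$ and $\sum_\pi d_\pi\|\hat f(\pi)\|_{\mathrm{HS}}^2 = \|f\|_2^2<\infty$.

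Next I would sum in $k$. Under the standing hypothesis $q<1$, so $\rho(\hat\nu(\pi))\le q<1$ for every $\pi\ne\pi_0$, and the Neumann series $N(\pi) := \sum_{k\ge 1}\hat\nu(\pi)^k = (I-\hat\nu(\pi))^{-1}\hat\nu(\pi)$ converges geometrically. To sidestep the interchange of two unconditional infinite sums, I would work with the Ces\`aro expression from Proposition~\ref{fuskvariance},
\[ V_N := \frac{1}{N}\,\E\Bigl[\Bigl(\sum_{k=1}^N f(US_k)\Bigr)^{\!2}\Bigr] = \|f\|_2^2 + 2\sum_{d=1}^{N-1}(1-d/N)A_d, \]
which tends to $C(f,\nu)$. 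Fourier-expanding each $A_d$ gives $V_N = \sum_{\pi\ne\pi_0}d_\pi\mathrm{tr}\bigl(\hat f(\pi)\hat f(\pi)^*[I+2\sum_{d=1}^{N-1}(1-d/N)\hat\nu(\pi)^d]\bigr)$, and for every fixed $\pi$ the bracket tends to $I+2N(\pi)$. Since $C(f,\nu)\in\mathbb{R}$, I would average with its complex conjugate: under $\pi\leftrightarrow\bar\pi$ one has $\hat f(\bar\pi) = \overline{\hat f(\pi)}$ and $\hat\nu(\bar\pi)=\overline{\hat\nu(\pi)}$, so the symmetrized bracket equals the Hermitian matrix $B_\nu(\pi) = I+N(\pi)+N(\pi)^*$, producing \eqref{Cfalternative}.

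The central remaining step is the nonnegativity. I would verify directly that
\[ (I-\hat\nu(\pi))^* B_\nu(\pi)(I-\hat\nu(\pi)) = I - \hat\nu(\pi)^*\hat\nu(\pi); \]
the cross terms telescope via $N(\pi)(I-\hat\nu(\pi)) = \hat\nu(\pi)$ (from $\hat\nu(\pi)$ commuting with $I-\hat\nu(\pi)$) and $(I-\hat\nu(\pi))^*(I-\hat\nu(\pi)^*)^{-1} = I$. Because each $\pi(x)$ is unitary, $\|\hat\nu(\pi)\|_{\mathrm{op}}\le\int_G\|\pi(x)^*\|\,\mathrm{d}\nu(x) = 1$, so the right-hand side is positive semidefinite, and hence $B_\nu(\pi) = C(\pi)^*(I-\hat\nu(\pi)^*\hat\nu(\pi))C(\pi) \succeq 0$ with $C(\pi) = (I-\hat\nu(\pi))^{-1}$. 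Each summand is then the trace of a product of two positive semidefinite matrices, hence nonnegative. With nonnegative summands the $\pi$-partial sums are monotone and bounded by $C(f,\nu)<\infty$, forcing convergence to $C(f,\nu)$ and retroactively justifying the limit interchanges above. I expect the main obstacle to be the bookkeeping needed to symmetrize the natural $\hat f(\pi)\hat f(\pi)^*$ form produced by Parseval into the Hermitian form in the statement, together with verifying the matrix identity that supplies the crucial positive semidefiniteness of $B_\nu(\pi)$.
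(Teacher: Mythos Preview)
Your direct proof that $B_\nu(\pi)\succeq 0$ via the identity
\[(I-\hat\nu(\pi))^*\,B_\nu(\pi)\,(I-\hat\nu(\pi)) \;=\; I - \hat\nu(\pi)^*\hat\nu(\pi)\]
is correct and is a genuinely different route from the paper's. The paper never shows that $B_\nu(\pi)$ itself is positive semidefinite; instead, for each fixed $\pi$ it projects $f$ in $L^2(G)$ onto the span of the matrix coefficients of $\pi$ (together with $\bar\pi$ when these are inequivalent), calls the projection $F$, and observes that the $\pi$-term in \eqref{Cfalternative} equals $C(F,\nu)$, which is $\ge 0$ by Proposition~\ref{fuskvariance}. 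Your argument is more elementary and yields the stronger operator inequality $B_\nu(\pi)\succeq 0$, which the paper does not state; the paper's argument, on the other hand, avoids any matrix algebra by reducing to the already-proved variance bound.

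There is, however, a gap in your interchange of limits. Knowing that the \emph{limiting} terms $d_\pi\,\mathrm{tr}(\hat f(\pi)\hat f(\pi)^*B_\nu(\pi))$ are nonnegative does not by itself let you pass $N\to\infty$ through $\sum_\pi$: Fatou would require nonnegativity of the finite-$N$ terms, and even then yields only one inequality. The paper closes this with a $\pi$-uniform bound that your outline omits: from $\|\nu^{*k}-\mu\|_{\mathrm{TV}}=\Delta_k$ one gets $|\langle\hat\nu(\pi)^kv,v\rangle|\le|v|^2\Delta_k$ for all $\pi\ne\pi_0$ and all $v$, hence $|\mathrm{tr}(\hat f(\pi)\hat\nu(\pi)^k\hat f(\pi)^*)|\le\mathrm{tr}(\hat f(\pi)\hat f(\pi)^*)\,\Delta_k$. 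Summing over $\pi$ (Parseval) and over $k$ ($\sum_k\Delta_k<\infty$) gives absolute convergence of the double series outright, so the Ces\`aro detour becomes unnecessary. If you prefer to keep the Ces\`aro route, note that your $c_\pi^{(N)}$ are in fact nonnegative --- one can write $c_\pi^{(N)}=\frac{d_\pi}{N}\,\E\bigl\|\bigl(\sum_{k\le N}\pi(S_k)\bigr)\hat f(\pi)\bigr\|_{\mathrm{HS}}^2$ --- but you still need the $\Delta_k$ bound (or something equivalent) as a dominating sequence to get the reverse inequality. Finally, the discrepancy you flag between $\hat f(\pi)\hat f(\pi)^*$ and $\hat f(\pi)^*\hat f(\pi)$ is real and cannot be removed by any symmetrization; your Parseval computation naturally produces the former. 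Since both are positive semidefinite, this affects neither the nonnegativity of the terms nor any downstream application in the paper.
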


\begin{proof} Let $A_k=\E f(U)f(US_k)$, and recall \eqref{EfUfUSk}. The function $h_k(u)=\int_G f(ux) \, \mathrm{d}\nu^{*k}(x)$ is in $L^2(G)$, and its Fourier coefficients are $\widehat{h_k}(\pi)=\hat{f}(\pi) \left( \hat{\nu}(\pi)^k \right)^*$. Applying the Parseval formula in \eqref{EfUfUSk} we thus obtain
\begin{equation}\label{akparseval}
A_k = \int_G f(u)h_k(u) \, \mathrm{d} \mu (u) = \sum_{\pi \in \hat{G}} d_{\pi} \mathrm{tr} \left( \hat{f}(\pi) \hat{\nu}(\pi )^k \hat{f}(\pi )^* \right) .
\end{equation}
Here $\hat{f}(\pi _0)=0$ as $\int_G f \, \mathrm{d}\mu =0$. Fix $\pi \in \hat{G}$, $\pi \neq \pi_0$. For any $v \in \mathbb{C}^{d_{\pi}}$ we have $\langle \hat{\nu}(\pi)^k v,v \rangle = \int_G \langle \pi(x)^* v,v \rangle \, \mathrm{d} \nu^{*k}(x)$, where $\langle a,b \rangle=\sum_{i=1}^{d_{\pi}} a_i \overline{b_i}$ denotes the standard sesquilinear form on $\mathbb{C}^{d_{\pi}}$. Since $\pi(x)^*$ is unitary, the integrand $g(x)=\langle \pi(x)^* v,v \rangle$ satisfies $\sup_G |g| \le |v|^2$. Further, $\int_G g \, \mathrm{d} \mu = \langle \hat{\mu}(\pi) v,v \rangle =0$ because $\pi \neq \pi_0$. Applying \eqref{maintool} to $g$ we thus obtain $|\langle \hat{\nu}(\pi)^k v,v \rangle| \le |v|^2 \Delta_k$. In particular, for any $v \in \mathbb{C}^{d_\pi}$ we have $|\langle \hat{f}(\pi) \hat{\nu} (\pi)^k \hat{f}(\pi)^* v,v \rangle| \le \langle \hat{f}(\pi) \hat{f}(\pi)^* v,v \rangle \Delta_k$. Summing this over an orthonormal basis of $\mathbb{C}^{d_{\pi}}$ we get $\left| \mathrm{tr} \left( \hat{f}(\pi) \hat{\nu}(\pi )^k \hat{f}(\pi )^* \right) \right| \le \mathrm{tr} \left( \hat{f}(\pi) \hat{f}(\pi )^* \right) \Delta_k$. Hence
\[ \sum_{k=1}^{\infty} \sum_{\pi \in \hat{G} \backslash \{ \pi_0 \}} d_{\pi} \left| \mathrm{tr} \left( \hat{f}(\pi) \hat{\nu}(\pi )^k \hat{f}(\pi )^* \right) \right| \le \| f \|_2^2 \sum_{k=1}^{\infty} \Delta_k < \infty , \]
justifying a change in the order of summation. Since $\rho (\hat{\nu}(\pi )) \le q<1$, we have $\sum_{k=1}^{\infty} \hat{\nu} (\pi)^k = (I_{d_{\pi}}-\hat{\nu}(\pi))^{-1}\hat{\nu}(\pi)$ in operator norm. We thus obtain
\[ C(f,\nu ) = \| f \|_2^2 + 2 \sum_{k=1}^{\infty} A_k = \sum_{\pi \in \hat{G} \backslash \{ \pi_0 \}} d_{\pi} \mathrm{tr} \left( \hat{f}(\pi )^* \hat{f}(\pi ) \left( I_{d_\pi } + 2 (I_{d_{\pi}} - \hat{\nu} (\pi ))^{-1} \hat{\nu} (\pi ) \right) \right) . \]
As $C(f, \nu )$ is clearly real, we can take the real part of the series in the previous line, resulting in formula \eqref{Cfalternative}.

Finally, we prove that every term in \eqref{Cfalternative} is nonnegative. Fix $\pi \in \hat{G} \backslash \{ \pi_0 \}$. First, suppose that $\pi$ and $\overline{\pi}$ are unitarily inequivalent. Then we may assume that $\pi, \overline{\pi} \in \hat{G}$. Since $f$ and $\nu$ are real-valued, we have $\hat{f}(\overline{\pi})=\overline{\hat{f}(\pi )}$ and $\hat{\nu}(\overline{\pi})=\overline{\hat{\nu}(\pi )}$. Hence $B_{\nu}(\overline{\pi})=\overline{B_{\nu}(\pi )}$, and the terms in \eqref{Cfalternative} indexed by $\pi$ and $\overline{\pi}$ are equal. Let $F$ be the orthogonal projection of $f$ in $L^2(G)$ to the linear subspace spanned by the matrix elements $\{ \pi_{ij} : 1 \le i,j \le d_{\pi} \} \cup \{ \overline{\pi}_{ij} : 1 \le i,j \le d_{\pi} \}$; that is, $F(x)= d_{\pi} \mathrm{tr} \left( \hat{f}(\pi ) \pi (x) \right) + d_{\pi} \mathrm{tr} \left( \hat{f}(\overline{\pi} ) \overline{\pi} (x) \right)$. Note that $F$ is real-valued, $\hat{F}(\pi)=\hat{f}(\pi)$, $\hat{F}(\overline{\pi})=\hat{f}(\overline{\pi})$ and $\hat{F}(\pi')=0$ for all $\pi' \neq \pi, \overline{\pi}$. Therefore the terms in \eqref{Cfalternative} indexed by $\pi$ and $\overline{\pi}$ are both $C(F,\nu)/2$. But $C(F,\nu) \ge 0$ from Proposition \ref{fuskvariance}, and we are done. Next, suppose that $\pi$ and $\overline{\pi}$ are unitarily equivalent. Let $F$ be the orthogonal projection of $f$ in $L^2(G)$ to the linear subspace spanned by the matrix elements $\{ \pi_{ij} : 1 \le i,j \le d_{\pi} \}$; note that $\{ \overline{\pi}_{ij} : 1 \le i,j \le d_{\pi}  \}$ span the same linear subspace. Thus $F(x)= d_{\pi} \mathrm{tr} \left( \hat{f}(\pi ) \pi (x) \right) = d_{\pi} \mathrm{tr} \left( \hat{f}(\overline{\pi} ) \overline{\pi} (x) \right)$. Again, $F$ is real-valued, $\hat{F}(\pi)=\hat{f}(\pi)$ and $\hat{F}(\pi')=0$ for all $\pi' \neq \pi$. Therefore the term in \eqref{Cfalternative} indexed by $\pi$ is $C(F,\nu) \ge 0$.
\end{proof}

\begin{prop}\label{specialcasesprop} Assume $f \in L^2(G)$, and let $\nu^*(B)=\nu(B^{-1})$ ($B \subseteq G$ Borel) denote the distribution of $X_1^{-1}$. Suppose at least one of the following hold.
\begin{enumerate}
\item[(i)] $f$ is a class function
\item[(ii)] $\nu * \nu^* = \nu^* * \nu$
\item[(iii)] $\nu$ is a central measure
\end{enumerate}
Then $\frac{1-q}{1+q} \| f \|_2^2 \le C(f,\nu) \le \frac{1+q}{1-q} \| f \|_2^2$. In particular, $C(f,\nu)=0$ if and only if $f=0$ $\mu$-a.e.
\end{prop}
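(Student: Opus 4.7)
The plan is to exploit the Fourier-side formula \eqref{Cfalternative} for $C(f,\nu)$ and, under each of the three hypotheses, reduce the $\pi$-summand $d_\pi \mathrm{tr}(\hat{f}(\pi)^* \hat{f}(\pi) B_\nu(\pi))$ to a simple eigenvalue estimate. The key starting observation is a general trace identity: if $\lambda_1, \dots, \lambda_{d_\pi}$ are the eigenvalues of $\hat{\nu}(\pi)$ listed with algebraic multiplicity, then using the Jordan form of $\hat{\nu}(\pi)$ and the identity $(I - M)^{-1} M = (I - M)^{-1} - I$,
\[
\mathrm{tr}(B_\nu(\pi)) = \mathrm{tr}(I - \hat{\nu}(\pi))^{-1} + \mathrm{tr}(I - \hat{\nu}(\pi)^*)^{-1} - d_\pi = \sum_{i=1}^{d_\pi} \frac{1-|\lambda_i|^2}{|1-\lambda_i|^2}.
\]
Since $|\lambda_i| \le \rho(\hat{\nu}(\pi)) \le q < 1$ for $\pi \ne \pi_0$ and $|1-\lambda_i| \in [1-|\lambda_i|, 1+|\lambda_i|]$, each summand lies in the interval $[(1-q)/(1+q),\, (1+q)/(1-q)]$.

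Next, I would dispatch the three cases. Under (i), the class-function property of $f$ combined with conjugation-invariance of $\mu$ yields $\pi(y)\hat{f}(\pi) = \hat{f}(\pi)\pi(y)$ for every $y \in G$, and Schur's lemma forces $\hat{f}(\pi) = \alpha_\pi I_{d_\pi}$; the $\pi$-summand becomes $d_\pi |\alpha_\pi|^2 \mathrm{tr}(B_\nu(\pi))$, which the termwise eigenvalue bound sandwiches between $(1-q)/(1+q)$ and $(1+q)/(1-q)$ times $d_\pi^2 |\alpha_\pi|^2 = d_\pi \mathrm{tr}(\hat{f}(\pi)^* \hat{f}(\pi))$. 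Under (iii), the centrality identity $\nu(y^{-1}By) = \nu(B)$ gives $\pi(y)^* \hat{\nu}(\pi) \pi(y) = \hat{\nu}(\pi)$, and Schur now forces $\hat{\nu}(\pi) = \beta_\pi I_{d_\pi}$, whence $B_\nu(\pi) = \frac{1-|\beta_\pi|^2}{|1-\beta_\pi|^2} I_{d_\pi}$ is a scalar matrix whose entry already lies in the target interval. Under (ii), the Fourier identities $\widehat{\nu_1 * \nu_2}(\pi) = \hat{\nu}_2(\pi) \hat{\nu}_1(\pi)$ and $\widehat{\nu^*}(\pi) = \hat{\nu}(\pi)^*$ translate $\nu * \nu^* = \nu^* * \nu$ into $\hat{\nu}(\pi)^* \hat{\nu}(\pi) = \hat{\nu}(\pi) \hat{\nu}(\pi)^*$, so $\hat{\nu}(\pi)$ is normal and unitarily diagonalizes as $\hat{\nu}(\pi) = VDV^*$; setting $C = V^* \hat{f}(\pi)^* \hat{f}(\pi) V$ (Hermitian and positive semidefinite), the $\pi$-summand becomes $d_\pi \sum_i C_{ii} (1-|\lambda_i|^2)/|1-\lambda_i|^2$, and since $C_{ii} \ge 0$ with $\sum_i C_{ii} = \mathrm{tr}(\hat{f}(\pi)^* \hat{f}(\pi))$, the same sandwich holds.

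In every case the $\pi$-summand lies between $(1-q)/(1+q)$ and $(1+q)/(1-q)$ times $d_\pi \mathrm{tr}(\hat{f}(\pi)^* \hat{f}(\pi))$. Summing over $\pi \ne \pi_0$ and invoking Parseval, together with $\hat{f}(\pi_0) = \int_G f \, \mathrm{d}\mu = 0$, yields
\[
\frac{1-q}{1+q} \|f\|_2^2 \le C(f,\nu) \le \frac{1+q}{1-q} \|f\|_2^2.
\]
Since $q < 1$, the lower constant is strictly positive, which delivers the final equivalence $C(f,\nu) = 0$ iff $f = 0$ $\mu$-a.e. The main subtlety is case (i), where $\hat{\nu}(\pi)$ need not be normal, so one cannot diagonalize and must rely on the Jordan-form trace identity at the outset; cases (ii) and (iii) then reduce to more standard spectral-theorem arguments.
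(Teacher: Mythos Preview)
Your proof is correct and follows essentially the same route as the paper: both use formula \eqref{Cfalternative}, invoke Schur's lemma to make $\hat{f}(\pi)$ scalar in case (i) and compute $\mathrm{tr}\,B_\nu(\pi)$ via the eigenvalues of $\hat{\nu}(\pi)$, and in case (ii) unitarily diagonalize the normal matrix $\hat{\nu}(\pi)$ to control the eigenvalues of $B_\nu(\pi)$. The only cosmetic differences are that you make the Jordan-form justification of the trace identity explicit, and you handle case (iii) directly via Schur's lemma (forcing $\hat{\nu}(\pi)$ to be scalar) whereas the paper simply observes that (iii) implies (ii).
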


\begin{proof} First, assume (i). It follows from Schur's lemma that $\hat{f}(\pi)$ is a scalar multiple of the identity matrix. Hence \eqref{Cfalternative} simplifies as
\[ C(f,\nu ) = \sum_{\pi \in \hat{G} \backslash \{ \pi_0 \}} \mathrm{tr} \left( \hat{f}(\pi )^* \hat{f}(\pi ) \right) \mathrm{tr}\, B_{\nu}(\pi ) . \]
Let $\lambda_1, \lambda_2, \dots, \lambda_{d_{\pi}}$ denote the eigenvalues of $\hat{\nu}(\pi)$. Then
\[ \mathrm{tr}\, B_{\nu}(\pi ) = \sum_{i=1}^{d_{\pi}} \left( 1 + \frac{\lambda_i}{1-\lambda_i} + \frac{\overline{\lambda_i}}{1-\overline{\lambda_i}} \right) = \sum_{i=1}^{d_{\pi}} \frac{1-|\lambda_i|^2}{|1-\lambda_i|^2} . \]
Since $\rho (\hat{\nu}(\pi)) \le q<1$, we have $|\lambda_i| \le q$, and so $d_{\pi}\frac{1-q}{1+q} \le \mathrm{tr}\, B_{\nu} (\pi) \le d_{\pi} \frac{1+q}{1-q}$. The claim thus follows from the Parseval formula.

Next, assume (ii). Since $\widehat{\nu^*}(\pi) = \hat{\nu}(\pi)^*$, the condition $\nu * \nu^* = \nu^* *\nu$ implies that the matrix $\hat{\nu}(\pi)$ is normal. Therefore there exists an orthonormal basis $v_1, v_2, \dots, v_{d_{\pi}}$ of $\mathbb{C}^{d_{\pi}}$ comprised of eigenvectors of $\hat{\nu}(\pi)$; say, $\hat{\nu}(\pi)v_i=\lambda_i v_i$. It follows that $\hat{\nu}(\pi)^* v_i = \overline{\lambda_i} v_i$, and hence
\[ B_{\nu}(\pi) v_i = \left( 1 + \frac{\lambda_i}{1-\lambda_i} + \frac{\overline{\lambda_i}}{1-\overline{\lambda_i}} \right) v_i = \frac{1-|\lambda_i|^2}{|1-\lambda_i|^2} v_i . \]
The eigenvalues of $B_{\nu}(\pi)$ again satisfy $\frac{1-q}{1+q} \le \frac{1-|\lambda_i|^2}{|1-\lambda_i|^2} \le \frac{1+q}{1-q}$. It is now easy to see that
\[ \frac{1-q}{1+q} \mathrm{tr} \left( \hat{f}(\pi )^* \hat{f}(\pi ) \right) \le \mathrm{tr} \left( \hat{f}(\pi )^* \hat{f}(\pi ) B_{\nu}(\pi ) \right) \le \frac{1+q}{1-q} \mathrm{tr} \left( \hat{f}(\pi )^* \hat{f}(\pi ) \right) , \]
and the claim follows. Finally, note that condition (iii) implies condition (ii).
\end{proof}

We conclude this section with an estimate of the $L^p$-norm for $1 \le p \le 4$. These estimates, combined with the Erd\H{o}s--Stechkin and the Rademacher--Menshov inequalities will help us bound the fluctuations of $\sum_{k=1}^N f(S_k)$ as $N$ runs in a short interval. Additionally, we will also use them to verify the Lyapunov condition in the proof of Theorem \ref{maintheoremCLT}.

\begin{prop}\label{highermoments} Assume $f \in L^p(G)$ for some $1 \le p \le 4$. For any integer $N \ge 1$
\[ \left\| \sum_{k=1}^{N} f(US_k) \right\|_p \ll \left\{ \begin{array}{ll} \| f \|_p \left( \Delta N \right)^{1/p} & \mathrm{if} \,\,\, 1 \le p < 2, \\ \| f \|_p \sqrt{\Delta N} & \mathrm{if} \,\,\, 2 \le p \le 4. \end{array} \right. \]
In the case $p=4$ we also have
\begin{equation}\label{fusk4thmoment}
\left\| \sum_{k=1}^{N} f(US_k) \right\|_4 \ll \| f \|_2 \sqrt{\Delta N} + \| f \|_4 \Delta^{3/4} N^{1/4}.
\end{equation}
\end{prop}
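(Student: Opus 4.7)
The plan is to handle the various cases of $p$ separately, using Proposition \ref{fuskvariance} as the $L^2$ backbone, a truncation argument for the non-$L^2$ ranges, and an explicit fourth-moment expansion for the refined estimate \eqref{fusk4thmoment}.

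The case $p = 2$ is Proposition \ref{fuskvariance}. For $1 \le p < 2$ I would split $f = f_1 + f_2$ with $f_1 = f \cdot \mathbf{1}_{\{|f| > M\}}$, re-centring each part so both have mean zero (the additive correction is controlled by $|\int_G f_1 \, \mathrm{d}\mu| \le \|f_1\|_1$ and does not affect the bounds up to absolute constants). The triangle inequality and Chebyshev give
\[
\Bigl\| \sum_{k=1}^N f_1(US_k) \Bigr\|_p \le N\|f_1\|_1 \le N M^{1-p}\|f\|_p^p,
\]
while Proposition \ref{fuskvariance} combined with $\|f_2\|_2^2 \le M^{2-p}\|f\|_p^p$ gives
\[
\Bigl\| \sum_{k=1}^N f_2(US_k) \Bigr\|_p \le \|f_2\|_2\sqrt{\Delta N} \le M^{(2-p)/2}\|f\|_p^{p/2}\sqrt{\Delta N}.
\]
Balancing the two at $M = (N/\Delta)^{1/p}\|f\|_p$ yields the claimed bound $\ll \|f\|_p(\Delta N)^{1/p}$.

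The heart of the proof is the refined $p = 4$ estimate. I would expand $\E\bigl(\sum_{k=1}^N f(US_k)\bigr)^4$ as a sum over sorted 4-tuples $a \le b \le c \le d$ with gaps $i = b-a$, $j = c-b$, $k = d-c$, and substitute $V = US_a$, $Y_1 = S_{[a+1,b]}$, $Y_2 = S_{[b+1,c]}$, $Y_3 = S_{[c+1,d]}$. Since $V$ is uniform and independent of $(Y_1,Y_2,Y_3)$, each summand equals $\E F(Y_1,Y_2,Y_3)$ where
\[
F(x,y,z) = \int_G f(v)f(vx)f(vxy)f(vxyz) \, \mathrm{d}\mu(v), \qquad |F| \le \|f\|_4^4.
\]
By left-invariance of $\mu$, $\int_G F \, \mathrm{d}\mu(x) = \int_G F \, \mathrm{d}\mu(z) = 0$ (both reduce to $\int f \, \mathrm{d}\mu = 0$ after a Haar substitution), so applying \eqref{maintool} in the $x$- and $z$-coordinates gives the two ``side-gap'' bounds $|\E F| \le \Delta_i \|f\|_4^4$ and $|\E F| \le \Delta_k \|f\|_4^4$. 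Integrating instead in the middle coordinate factorises $\int_G F \, \mathrm{d}\mu(y) = A(x) A(z)$ with $A(z) = \int_G f(u) f(uz) \, \mathrm{d}\mu(u)$ satisfying $|A| \le \|f\|_2^2$ and $\int_G A \, \mathrm{d}\mu = 0$; two applications of \eqref{maintool} then give the ``middle-gap'' bound $|\E F| \le \Delta_j \|f\|_4^4 + \Delta_i\Delta_k\|f\|_2^4$. Taking the minimum and using that $\Delta_m$ is monotone, one obtains in every case
\[
|\E F(Y_1,Y_2,Y_3)| \le \Delta_{\max(i,j,k)}\|f\|_4^4 + \Delta_i\Delta_k\|f\|_2^4.
\]
Summing over $(a,b,c,d)$ and using the identities $\sum_{m\ge 1}\Delta_m \le \Delta$ and $\sum_{m\ge 1} m^2\Delta_m \ll \Delta^3$ (the latter follows from $\Delta_{m+n} \le \Delta_m\Delta_n$ by writing $m^2 = \sum_{u,v \le m} 1$, swapping summations, and iterating $\sum_{m\ge M}\Delta_m \le \Delta_M\Delta$) one arrives at $\E\bigl(\sum f(US_k)\bigr)^4 \ll \|f\|_4^4\Delta^3 N + \|f\|_2^4\Delta^2 N^2$, which is \eqref{fusk4thmoment} after taking fourth roots. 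Degenerate tuples with a zero gap contribute strictly lower-order terms and are absorbed.

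Finally, for $2 \le p \le 4$ the same truncation applies, now with the refined $p = 4$ bound used on $f_2$ (via $\|f_2\|_4 \le M^{(4-p)/4}\|f\|_p^{p/4}$) and Proposition \ref{fuskvariance} on $f_1$ (via $\|f_1\|_2 \le M^{(2-p)/2}\|f\|_p^{p/2}$, which uses $p \ge 2$). Optimizing $M$ produces $\|f\|_p\sqrt{\Delta N}$ in the regime $N \ge \Delta$, and the triangle inequality $\|\sum\|_p \le N\|f\|_p \le \sqrt{\Delta N}\|f\|_p$ handles $N < \Delta$. The main obstacle is the fourth-moment step: combining the three mixing inequalities to extract the sharp $\Delta^3 N$ factor on the $\|f\|_4^4$ term rather than a naive $\Delta N^3$, and bounding $\sum_m m^2\Delta_m$ by $\Delta^3$ from submultiplicativity alone, so that the implied constants remain universal.
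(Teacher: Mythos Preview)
Your fourth-moment argument is correct and gives a valid route to \eqref{fusk4thmoment}, though it is organised differently from the paper. The paper peels off the gaps sequentially, arriving at the single pointwise bound $\|f\|_2^4\Delta_i\Delta_k + \|f\|_4^4\Delta_i\Delta_j\Delta_k$ for each ordered $4$-tuple, so that the sum over $(i,j,k)$ of the $\|f\|_4^4$ term is simply $N(\sum_m\Delta_m)^3\ll N\Delta^3$. Your approach, combining three separate mixing inequalities into $\Delta_{\max(i,j,k)}\|f\|_4^4+\Delta_i\Delta_k\|f\|_2^4$ and then invoking $\sum_m m^2\Delta_m\ll\Delta^3$ via submultiplicativity, reaches the same conclusion and is a nice alternative.

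The truncation argument for intermediate $p$, however, does not work as written. In the range $1\le p<2$ you assert $\bigl\|\sum_k f_1(US_k)\bigr\|_p\le N\|f_1\|_1$, but on a probability space $\|\cdot\|_p\ge\|\cdot\|_1$, so the triangle inequality only gives $N\|f_1\|_p$, which is useless. The same problem recurs for $2<p\le 4$: you propose to control $\bigl\|\sum_k f_1(US_k)\bigr\|_p$ via Proposition~\ref{fuskvariance}, but that proposition bounds the $L^2$ norm, and again $\|\cdot\|_2\le\|\cdot\|_p$ goes the wrong way. A Marcinkiewicz-style truncation could in principle be made to work by passing through distribution functions rather than norms, but that is not what you wrote. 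The paper sidesteps the issue entirely by observing that $f\mapsto\sum_{k=1}^N f(US_k)-N\int_G f\,\mathrm{d}\mu$ is a \emph{linear} operator, and applying Riesz--Thorin interpolation between the trivial $L^1\to L^1$ bound (norm $\ll N$), the $L^2\to L^2$ bound from Proposition~\ref{fuskvariance}, and the $L^4\to L^4$ bound just established. This yields the intermediate cases immediately with universal constants.
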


\begin{proof} First, assume $p=4$. Expanding the fourth power we get
\begin{equation}\label{expandfourth}
\E \left( \sum_{k=1}^{N} f(US_k) \right)^4 \ll \sum_{1 \le k_1 \le k_2 \le k_3 \le k_4 \le N} \left| \E f(US_{k_1})f(US_{k_2})f(US_{k_3})f(US_{k_4}) \right| .
\end{equation}
Fix $1 \le k_1 \le k_2 \le k_3 \le k_4 \le N$. Since $US_{k_1}$ is uniformly distributed on $G$ and independent of $X_{k_1+1}, X_{k_1+2}, \ldots$, we have
\[ \begin{split} &\E f(US_{k_1})f(US_{k_2})f(US_{k_3})f(US_{k_4}) = \\ &\int_G \int_G \int_G \int_G f(u)f(ux)f(uxy)f(uxyz) \, \mathrm{d} \mu (u) \mathrm{d} \nu^{*(k_2-k_1)} (x) \mathrm{d} \nu^{*(k_3-k_2)} (y) \mathrm{d} \nu^{*(k_4-k_3)} (z) . \end{split} \]
Here we use the convention that $\nu^{*0}$ is the Dirac measure concentrated on the identity element of $G$, and $\Delta_0=\| \nu^{*0}-\mu \|_{\mathrm{TV}} \le 2$. Let
\[ g(z)= \int_G \int_G \int_G f(u)f(ux)f(uxy)f(uxyz) \, \mathrm{d} \mu (u) \mathrm{d} \nu^{*(k_2-k_1)} (x) \mathrm{d} \nu^{*(k_3-k_2)} (y) . \]
As $\int_G g(z) \, \mathrm{d} \mu (z) =0$, applying \eqref{maintool} to $g$ we obtain
\begin{equation}\label{k1k2k3k4}
\left| \E f(US_{k_1})f(US_{k_2})f(US_{k_3})f(US_{k_4}) \right| \le \sup_G |g| \cdot \Delta_{k_4-k_3}.
\end{equation}
Fix $z \in G$, and let $h_z(y)= \int_G \int_G f(u)f(ux)f(uxy)f(uxyz) \, \mathrm{d} \mu (u) \mathrm{d} \nu^{*(k_2-k_1)} (x)$. Note that
\[ \int_G h_z(y) \, \mathrm{d} \mu (y) = \int_G \int_G f(u)f(ux) w_z \, \mathrm{d} \mu (u) \mathrm{d} \nu^{*(k_2-k_1)}(x) ,  \]
where $w_z=\int_G f(uxy)f(uxyz) \mathrm{d} \mu (y) = \int_G f(y) f(yz) \mathrm{d} \mu (y)$ does not depend on $u$ and $x$. Applying \eqref{maintool} to $h_z$ we get
\[ \begin{split} |g(z)| &= \left| \int_G h_z(y) \, \mathrm{d} \nu^{*(k_3-k_2)}(y) \right| \\ &\le |w_z| \cdot \left| \int_G \int_G f(u)f(ux) \, \mathrm{d} \mu (u) \mathrm{d} \nu^{*(k_2-k_1)}(x) \right| + \sup_G |h_z| \cdot \Delta_{k_3-k_2} . \end{split} \]
Here $|w_z| \le \| f \|_2^2$, and the double integral in the previous line is $\le \| f \|_2^2 \Delta_{k_2-k_1}$, as seen in the proof of Proposition \ref{fuskvariance}. Hence
\begin{equation}\label{gz}
|g(z)| \le \| f \|_2^4 \Delta_{k_2-k_1}+\sup_G |h_z| \cdot \Delta_{k_3-k_2}.
\end{equation}
Now fix $y,z \in G$, and let $r_{y,z}(x)=\int_G f(u)f(ux)f(uxy)f(uxyz) \, \mathrm{d} \mu (u)$. Note that $\sup_G |r_{y,z}| \le \| f \|_4^4$, and that $\int_G r_{y,z}(x) \, \mathrm{d} \mu (x) =0$. Applying \eqref{maintool} we thus get
\begin{equation}\label{hzy}
\left| h_z(y) \right| = \left| \int_G r_{y,z}(x) \, \mathrm{d} \nu^{*(k_2-k_1)}(x) \right| \le \| f \|_4^4 \Delta_{k_2-k_1}.
\end{equation}
Combining \eqref{k1k2k3k4}, \eqref{gz} and \eqref{hzy} we finally obtain
\begin{multline*} \left| \E f(US_{k_1})f(US_{k_2})f(US_{k_3})f(US_{k_4}) \right| \le \\ \| f \|_2^4 \Delta_{k_2-k_1} \Delta_{k_4-k_3} + \| f \|_4^4 \Delta_{k_2-k_1} \Delta_{k_3-k_2} \Delta_{k_4-k_3} . \end{multline*}
Summing over $1 \le k_1 \le k_2 \le k_3 \le k_4 \le N$, \eqref{fusk4thmoment} follows.

On the other hand, we can use $\Delta_{k_3-k_2} \le 2$ to deduce the simpler estimate
\[ \left| \E f(US_{k_1})f(US_{k_2})f(US_{k_3})f(US_{k_4}) \right| \le 3 \| f \|_4^4 \Delta_{k_2-k_1} \Delta_{k_4-k_3} , \]
and by summing over $1 \le k_1 \le k_2 \le k_3 \le k_4 \le N$ we get $\left\| \sum_{k=1}^{N} f(US_k) \right\|_4 \ll \| f \|_4 \sqrt{\Delta N}$. Proposition \ref{fuskvariance} shows that if $f \in L^2(G)$, the same estimate holds with $\| \cdot \|_4$ replaced by $\| \cdot \|_2$ on both sides. Moreover, we also have the trivial estimate $\left\| \sum_{k=1}^N f(US_k) \right\|_1 \le \| f \|_1 N$ for any $f\in L^1(G)$. This settles the endpoints of the intervals $1 \le p \le 2$ and $2 \le p \le 4$. The cases $1<p<2$ and $2<p<4$ follow from the Riesz--Thorin interpolation theorem applied to the linear operator $f \mapsto \sum_{k=1}^N f(US_k) - N \int_G f \, \mathrm{d} \mu$.
\end{proof}

\section{Approximation by independent variables}\label{approxsection}

Assume again, that $\nu$ is adapted and strictly aperiodic, and that $(\nu^{*k})_{\mathrm{abs}} \neq 0$ for some $k \ge 1$. Fix a Borel measurable function $f: G \to \mathbb{R}$ such that $\int_G f \, \mathrm{d} \mu =0$. In this section we approximate the shifted sum $\sum_{k=M+1}^{M+N} f(S_k)$ by a sum of independent random variables. The main tool of this approximation is a coupling between $\nu^{*k}$ and $\mu$, which we will construct using Strassen's theorem. We mention that this is the only step of the proof where we use the fact that $G$ is metrizable. A similar approach was used by Schatte \cite{SCH} on the circle group $G = \mathbb{R} / \mathbb{Z}$, with a different type of coupling based on the Kolmogorov metric instead of the total variation metric.

Recall that for any two Borel probability measures $\nu_1$ and $\nu_2$ on $G$ (or indeed, on any Polish space) we have $\| \nu_1 - \nu_2 \|_{\mathrm{TV}} = 2 \inf_{\vartheta} \vartheta \left( \{ (x,y) \in G \times G : x \neq y \} \right)$, where the infimum is over all Borel probability measures $\vartheta$ on $G \times G$ whose marginals are $\vartheta (B \times G) = \nu_1 (B)$ and $\vartheta (G \times B) = \nu_2 (B)$. This fact follows from Strassen's theorem, which in turn is a special case of the Kantorovich duality theorem in the theory of optimal transportation (see e.g.\ \cite[Chapter 1]{V}). In particular, for any $k \ge 1$ there exists a Borel probability measure $\vartheta_k$ on $G \times G$ with marginals $\nu^{*k}$ and $\mu$ such that $\vartheta_k \left( \{ (x,y) \in G \times G : x \neq y \} \right) \le \Delta_{k}$. After a suitable extension of the probability space, for any nonempty, finite interval of positive integers $J \subseteq \mathbb{N}$ we may therefore introduce auxiliary $G$-valued random variables $T_J, U_J$ whose joint distribution is $\vartheta_{|J|}$; that is, $T_J \overset{d}{=} S_J$, $U_J$ is uniformly distributed on $G$, and $\Pr (T_J \neq U_J) \le \Delta_{|J|}$. Moreover, we may assume $(T_J, U_J)$, $J \subseteq \mathbb{N}$ and $X_1, X_2, \ldots$ are independent. The independence of the approximating variables will follow from the following observation.

\begin{lem}\label{schattelemma} Let $G$ be a compact metrizable group, and let $(S,\mathcal{A})$ be a measurable space. Let $U$ be a $G$-valued, and let $V$ be an $S$-valued random variable. If $U$ and $V$ are independent and $U$ is uniformly distributed on $G$, then for any Borel measurable function $g: S \to G$ the variables $g(V) U$ and $V$ are also independent.
\end{lem}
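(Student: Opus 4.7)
The plan is to verify independence directly by computing the joint distribution of $(g(V)U, V)$ on a product of measurable rectangles and showing that it factors as the product of the marginals. Since the sigma-algebra on $G \times S$ is generated by such rectangles, this suffices.

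Concretely, fix a Borel set $B \subseteq G$ and a measurable set $A \in \mathcal{A}$. I would condition on $V$: because $U$ and $V$ are independent, for each fixed $v \in S$ the conditional distribution of $g(V)U$ given $V=v$ is the distribution of $g(v)U$, which by left-invariance of the Haar measure $\mu$ equals $\mu$ itself. Hence
\[
\Pr\bigl(g(V)U \in B,\, V \in A\bigr) = \int_{A} \Pr\bigl(g(v)U \in B\bigr)\, \mathrm{d}P_V(v) = \int_{A} \mu\bigl(g(v)^{-1}B\bigr)\, \mathrm{d}P_V(v) = \mu(B)\, \Pr(V \in A).
\]
Taking $A = S$ gives the marginal $\Pr(g(V)U \in B) = \mu(B)$, so in particular $g(V)U$ is itself uniformly distributed on $G$. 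Comparing the two displayed identities yields the factorization
\[
\Pr\bigl(g(V)U \in B,\, V \in A\bigr) = \Pr\bigl(g(V)U \in B\bigr)\, \Pr(V \in A),
\]
which is the desired independence.

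The only technical point worth noting is the measurability needed to justify Fubini in the conditioning step: one must check that $(v,u) \mapsto \mathbf{1}_{B}(g(v)u)$ is jointly measurable on $S \times G$, which follows since $g$ is Borel measurable and group multiplication is continuous (hence Borel). Everything else is a direct application of the left-invariance of Haar measure, so I do not expect any genuine obstacle; the lemma is essentially the statement that multiplying any independent random element by an independent Haar-uniform variable destroys all dependence on that element.
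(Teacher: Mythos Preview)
Your proof is correct and follows essentially the same approach as the paper: both compute $\Pr(g(V)U \in B, V \in A)$ by integrating out $U$ first (via Fubini/conditioning on $V$), invoke left-invariance of Haar measure to get $\mu(g(v)^{-1}B)=\mu(B)$, and conclude the factorization. Your explicit remark on the joint measurability needed for Fubini is a point the paper leaves implicit.
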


\begin{proof} Note that $g(V) U$ is uniformly distributed on $G$. Let $\gamma$ denote the distribution of $V$. For any Borel set $B \subseteq G$ and any $A \in \mathcal{A}$ we have
\[ \begin{split} \Pr \left( g(V) U \in B, V \in A \right) &= \int_S \int_G I_B (g(v)u) I_A (v) \, \mathrm{d} \mu (u) \mathrm{d} \gamma (v) \\ &= \int_S \int_G I_{g(v)^{-1}B} (u) I_A (v) \, \mathrm{d} \mu (u) \mathrm{d} \gamma (v) \\ &= \int_S \mu (B) I_A (v) \, \mathrm{d} \gamma (v) \\ &= \Pr \left( g(V)U \in B \right) \Pr (V \in A) .  \end{split} \]
\end{proof}

We construct the approximating variables as follows. Fix an integer $M \ge 0$, and let us decompose the infinite set $\{ M+1, M+2, \ldots \}$ into consecutive, nonempty, finite intervals of integers $H_1, J_1, H_2, J_2, \ldots$. For all $i \ge 1$ and $k \in J_i$ let
\[ W_k=S_M \prod_{j=1}^{i-1} \left( T_{H_j} S_{J_j} \right) T_{H_i} \prod_{\substack{\ell \in J_i \\ \ell \le k}} X_{\ell}, \quad W_k^*=S_M \prod_{j=1}^{i-1} \left( T_{H_j} S_{J_j} \right) U_{H_i} \prod_{\substack{\ell \in J_i \\ \ell \le k}} X_{\ell} . \]
Similarly, for all $i \ge 2$ and $k \in H_i$ let
\[ W_k=S_M \prod_{j=1}^{i-2} \left( S_{H_j} T_{J_j} \right) S_{H_{i-1}} T_{J_{i-1}} \prod_{\substack{\ell \in H_i \\ \ell \le k}} X_{\ell}, \quad W_k^*= S_M \prod_{j=1}^{i-2} \left( S_{H_j} T_{J_j} \right) S_{H_{i-1}} U_{J_{i-1}} \prod_{\substack{\ell \in H_i \\ \ell \le k}} X_{\ell} . \]
Note that here the case $i=1$ is excluded to ensure that $H_i$ is preceded by an interval $J_{i-1}$. Let us also introduce the variables
\[ \begin{split} Y_i &= \sum_{k \in J_i} f(W_k), \qquad Y_i^* = \sum_{k \in J_i} f(W_k^*) \quad (i \ge 1), \\ Z_i &= \sum_{k \in H_i} f(W_k), \qquad Z_i^* = \sum_{k \in H_i} f(W_k^*) \quad (i \ge 2) . \end{split} \]
Observe that the random sequence $W_k$, $k \in \bigcup_{i=1}^{\infty} J_i$ has the same distribution as $S_k$, $k \in \bigcup_{i=1}^{\infty} J_i$. Similarly, $W_k$, $k \in \bigcup_{i=2}^{\infty} H_i$ has the same distribution as $S_k$, $k \in \bigcup_{i=2}^{\infty} H_i$.

For every $R \ge 1$ let $N_R$ be such that $M+N_R=\max J_R$. Then $\sum_{k=M+1}^{M+N} f(S_k)$ along the subsequence $N_R$ satisfies
\[ \sum_{k=M+1}^{M+N_R} f(S_k) = \sum_{k \in H_1} f(S_k) + \sum_{i=1}^R \sum_{k \in J_i} f(S_k) + \sum_{i=2}^R \sum_{k \in H_i} f(S_k) . \]
Here the sequence $\sum_{i=1}^R \sum_{k \in J_i} f(S_k)$, $R=1,2,\dots$ has the same distribution as $\sum_{i=1}^R Y_i$, $R=1,2,\dots$; similarly, the sequence $\sum_{i=2}^R \sum_{k \in H_i} f(S_k)$, $R=2,3,\dots$ has the same distribution as $\sum_{i=2}^R Z_i$, $R=2,3,\dots$. The main idea is to replace $Y_i$ by $Y_i^*$, and $Z_i$ by $Z_i^*$. First, we establish the properties of the approximating variables $Y_i^*$ and $Z_i^*$, then we estimate the error committed.

\begin{lem}\label{yi*zi*} $Y_1^*, Y_2^*, \ldots$ are independent, and $\E Y_i^*=0$.
\begin{enumerate}
\item[(i)] If $f \in L^2(G)$, then $\E (Y_i^*)^2 = C(f,\nu) |J_i| +O_{\nu}(\| f \|_2^2)$.
\item[(ii)] If $f \in L^p(G)$ for some $1 \le p \le 4$, then for any $0 \le R<S$
\[ \left\| \sum_{i=R+1}^S Y_i^* \right\|_p \ll \left\{ \begin{array}{ll} \| f \|_p \left( \Delta \sum_{i=R+1}^S |J_i| \right)^{1/p} & \mathrm{if} \,\,\, 1 \le p < 2, \\ \| f \|_p \sqrt{\Delta \sum_{i=R+1}^S |J_i|} & \mathrm{if} \,\,\, 2 \le p \le 4. \end{array} \right.  \]
In the case $p=4$ we also have
\begin{equation}\label{yi*4thmoment}
\left\| \sum_{i=R+1}^S Y_i^* \right\|_4 \ll \| f \|_2 \sqrt{\Delta \sum_{i=R+1}^S |J_i|} + \| f \|_4 \Delta^{3/4} \left( \sum_{i=R+1}^S|J_i| \right)^{1/4}.
\end{equation}
\end{enumerate}
The same hold for $Z_2^*, Z_3^*, \ldots$ with $|J_i|$ replaced by $|H_i|$.
\end{lem}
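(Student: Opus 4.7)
The plan is to extract the underlying independence structure, then read off the mean, variance, and higher-moment bounds from Proposition \ref{fuskvariance} and Proposition \ref{highermoments}. The central observation is that if we set $V_i = C_i U_{H_i}$, where $C_i = S_M \prod_{j=1}^{i-1}(T_{H_j} S_{J_j})$ is the deterministic prefix appearing in $W_k^*$ for $k \in J_i$, then
\[ Y_i^* = \sum_{k \in J_i} f\!\left( V_i \prod_{\substack{\ell \in J_i \\ \ell \le k}} X_\ell \right), \]
so $Y_i^*$ depends only on $V_i$ and the disjoint block $(X_\ell)_{\ell \in J_i}$. The auxiliary random variable $U_{H_i}$ is uniform on $G$ and, by construction, independent of $C_i$ (which uses only the $T_{H_j}$ for $j<i$, the $X_\ell$'s, and $S_M$), of $U_{H_j}$ for $j \neq i$, and of every $X_\ell$. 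Applying Lemma \ref{schattelemma} inductively in $n$ (at each step conditioning on everything but $U_{H_n}$ and using left-invariance of $\mu$ to show $V_n$ is uniform given the conditioning), I would prove that $V_1, V_2, \ldots$ is an i.i.d.\ sequence of uniform random variables, jointly independent of $X_1, X_2, \ldots$. Independence of $Y_1^*, Y_2^*, \ldots$ is then immediate.

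This same joint independence yields the distributional identity
\[ Y_i^* \overset{d}{=} \sum_{j=1}^{|J_i|} f(U S_j), \]
where $U$ is uniform on $G$ and independent of the walk. In particular each $W_k^*$ is uniform on $G$, so $\E f(W_k^*) = \int_G f\,\mathrm{d}\mu = 0$ and hence $\E Y_i^* = 0$. Part (i) is then exactly Proposition \ref{fuskvariance} applied to the re-indexed walk of length $|J_i|$.

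For (ii), I would exploit independence to decouple the moments and then invoke Proposition \ref{highermoments} on each summand. The case $p=2$ follows from orthogonality and the variance bound. The case $p=1$ is the triangle inequality combined with $\| Y_i^* \|_1 \le \| f \|_1 |J_i|$ (and $\Delta \ge 1$). For $p=4$, the standard expansion for independent mean-zero variables gives
\[ \E\!\left( \sum_{i=R+1}^S Y_i^* \right)^{\!4} = \sum_i \E (Y_i^*)^4 + 3\sum_{i \neq j} \E (Y_i^*)^2 \E (Y_j^*)^2; \]
the first sum is controlled by $\ll \sum_i \bigl( \| f \|_2^4 \Delta^2 |J_i|^2 + \| f \|_4^4 \Delta^3 |J_i| \bigr)$ via estimate \eqref{fusk4thmoment}, the second by $\bigl( \| f \|_2^2 \Delta \sum_i |J_i| \bigr)^2$ from Proposition \ref{fuskvariance}, and combining yields \eqref{yi*4thmoment}. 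The intermediate cases $p \in (1,2) \cup (2,4)$ then follow from the Riesz--Thorin interpolation theorem applied to the linear operator $f \mapsto \sum_{i=R+1}^S Y_i^*$ (centered to enforce $\int_G f\,\mathrm{d}\mu = 0$), exactly as at the end of the proof of Proposition \ref{highermoments}.

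The statement for the $Z_i^*$ is entirely analogous: one sets $V_i' = D_i U_{J_{i-1}}$ with $D_i = S_M \prod_{j=1}^{i-2}(S_{H_j} T_{J_j}) S_{H_{i-1}}$ and repeats the argument with $J_{i-1}$ and $H_i$ interchanging roles. I expect the main subtlety to be the inductive verification of joint independence of the $V_i$'s: although $U_{H_i}$ is coupled to $T_{H_i}$, the prefixes $C_j$ for $j > i$ involve $T_{H_i}$ but not $U_{H_i}$, so Lemma \ref{schattelemma} applied in the right order still produces mutual independence. Once that bookkeeping is in place, every quantitative claim in the lemma reduces cleanly to Propositions \ref{fuskvariance} and \ref{highermoments}.
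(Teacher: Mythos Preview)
Your proposal is correct and follows essentially the same route as the paper: independence via Lemma~\ref{schattelemma} (the paper shows directly that $Y_i^*$ is independent of $(Y_1^*,\dots,Y_{i-1}^*)$, while you establish the slightly stronger fact that the $V_i$ are i.i.d.\ uniform and jointly independent of the $X_\ell$, but the underlying mechanism is identical), then the distributional identity $Y_i^* \overset{d}{=} \sum_{k=1}^{|J_i|} f(US_k)$, and finally Propositions~\ref{fuskvariance} and~\ref{highermoments} together with Riesz--Thorin. One small point to patch: for the interpolation on $2<p<4$ you need the endpoint bound $\bigl\|\sum_i Y_i^*\bigr\|_4 \ll \|f\|_4 \sqrt{\Delta \sum_i |J_i|}$, which is not \eqref{yi*4thmoment} (that estimate mixes $\|f\|_2$ and $\|f\|_4$ and so does not feed directly into Riesz--Thorin) but follows from the first display of Proposition~\ref{highermoments} applied to each $Y_i^*$, exactly as the paper does.
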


\begin{proof} To see that $Y_1^*, Y_2^*, \ldots$ are independent, it will be enough to prove that $Y_i^*$ is independent of the random vector $(Y_1^*, Y_2^*, \dots, Y_{i-1}^*)$ for all $i \ge 2$. Let $W$ be the random vector whose coordinates are the variables $X_k$, $k \in [1,M] \cup J_1 \cup \cdots \cup J_{i-1}$ and $T_{H_1}, U_{H_1}, T_{H_2}, U_{H_2}, \dots, T_{H_{i-1}}, U_{H_{i-1}}$. Further, let $W'$ be the random vector with coordinates $X_k$, $k \in J_i$. Applying Lemma \ref{schattelemma} to $V=(W,W')$ and $U=U_{H_i}$ we get that $(W,W')$ and $g(W,W')U_{H_i}$ are independent for any Borel measurable function $g$. But $W$ and $W'$ are also independent, therefore $W$, $W'$, $g(W,W')U_{H_i}$ are independent as well. Note that $(Y_1^*, Y_2^*, \dots, Y_{i-1}^*)$ is a function of $W$, whereas $Y_i^*$ is a function of $W'$ and $g(W,W')U_{H_i}$ for some $g$ (in fact, $g(W,W')$ is simply the product of certain components of $W$). The independence thus follows.

Now fix $i \ge 1$. Note that $S_M \prod_{j=1}^{i-1} \left( T_{H_j} S_{J_j} \right) U_{H_i}$ is uniformly distributed on $G$ and independent of $X_k$, $k \in J_i$. Hence $Y_i^*=\sum_{k \in J_i} f(W_k^*) \overset{d}{=} \sum_{k=1}^{|J_i|} f(US_k)$. Here $US_k$ is uniformly distributed on $G$; in particular, $\E Y_i^* = \sum_{k=1}^{|J_i|} \E f(US_k)=0$.

Claim (i) follows from Proposition \ref{fuskvariance}. Now fix $0 \le R <S$, and let us prove (ii). The case $p=1$ follows from $\| Y_i^* \|_1 \le \sum_{k=1}^{|J_i|} \| f(US_k) \|_1 = \| f \|_1 |J_i|$. If $p=2$, Proposition \ref{fuskvariance} gives $\| Y_i^* \|_2 = \left\| \sum_{k=1}^{|J_i|} f(US_k) \right\|_2 \le \| f \|_2 \sqrt{\Delta |J_i|}$, hence the claim follows from independence. Now assume $p=4$. The independence of $Y_1^*, Y_2^*, \dots$ implies
\[ \E \left| \sum_{i=R+1}^S Y_i^* \right|^4 \ll \sum_{i=R+1}^S \E |Y_i^*|^4 + \left( \sum_{i=R+1}^S \E |Y_i^*|^2 \right)^2 . \]
Proposition \ref{highermoments} shows $\| Y_i^* \|_4 = \left\| \sum_{k=1}^{|J_i|} f(US_k) \right\|_4 \ll \| f \|_2 \sqrt{\Delta |J_i|} + \| f \|_4 \Delta^{3/4} |J_i|^{1/4}$, yielding \eqref{yi*4thmoment}. On the other hand, Proposition \ref{highermoments} also gives $\| Y_i^* \|_4 \ll \| f \|_4 \sqrt{\Delta |J_i|}$, and so $\left\| \sum_{i=R+1}^S Y_i^* \right\|_4 \ll \| f \|_4 \sqrt{\Delta \sum_{i=R+1}^S |J_i|}$ follows as well. This settles the endpoints of the intervals $1 \le p \le 2$ and $2 \le p \le 4$.

Observe that for a given integer $M \ge 0$, given intervals $H_1, J_1, \dots$ and given $0 \le R<S$, the sum $\sum_{i=R+1}^S Y_i^*$ is linear in $f$. Applying the Riesz--Thorin interpolation theorem to the linear operator $f \mapsto \sum_{i=R+1}^S Y_i^* - \left( \sum_{i=R+1}^S |J_i| \right) \int_G f \, \mathrm{d}\mu$, the cases $1<p<2$ and $2<p<4$ follow. The proof for $Z_2^*, Z_3^*, \ldots$ is analogous.
\end{proof}

\begin{lem}\label{approxerrorlemma} If $L_p=\sup_{c \in G} \E |f(cX_1)|^p< \infty$ for some $p \ge 1$, then $\| Y_i-Y_i^* \|_p \le 2 |J_i| \left( L_p \Delta_{|H_i|} \right)^{1/p}$ and $\| Z_i-Z_i^* \|_p \le 2 |H_i| \left( L_p \Delta_{|J_i|} \right)^{1/p}$.

\end{lem}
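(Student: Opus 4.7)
The plan is to exploit the fact that $W_k$ and $W_k^*$ differ only through the single factor $T_{H_i}$ vs.\ $U_{H_i}$, which coincide except on an event of probability at most $\Delta_{|H_i|}$. So I first write
\[ \| Y_i - Y_i^* \|_p \le \sum_{k \in J_i} \| f(W_k) - f(W_k^*) \|_p \]
by Minkowski's inequality, and reduce the problem to bounding each term $\| f(W_k) - f(W_k^*) \|_p$ by $2(L_p \Delta_{|H_i|})^{1/p}$. Since $W_k=W_k^*$ whenever $T_{H_i}=U_{H_i}$, the inequality $|f(W_k)-f(W_k^*)|^p \le 2^{p-1}(|f(W_k)|^p+|f(W_k^*)|^p) \mathbf{1}_{T_{H_i} \ne U_{H_i}}$ holds pointwise, so it suffices to show $\E[|f(W_k)|^p \mathbf{1}_{T_{H_i} \ne U_{H_i}}] \le L_p \Delta_{|H_i|}$ and the analogous bound for $W_k^*$.

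The key step is to isolate the last random factor $X_k$ in the product defining $W_k$ and use the uniform moment bound $L_p$. Writing $W_k = \alpha_k X_k$, where
\[ \alpha_k = S_M \prod_{j=1}^{i-1} \bigl( T_{H_j} S_{J_j} \bigr) T_{H_i} \prod_{\substack{\ell \in J_i \\ \ell<k}} X_\ell , \]
I observe that $\alpha_k$ and $\mathbf{1}_{T_{H_i} \ne U_{H_i}}$ are both measurable with respect to the $\sigma$-algebra $\mathcal{F}_k$ generated by all of the construction's variables except $X_k$, while $X_k$ itself is independent of $\mathcal{F}_k$ with distribution $\nu$. Conditioning on $\mathcal{F}_k$ gives
\[ \E\bigl[ |f(W_k)|^p \mid \mathcal{F}_k \bigr] = \E_{X \sim \nu} |f(\alpha_k X)|^p \le L_p \quad \mathrm{a.s.} \]
by the definition of $L_p$, and multiplying by $\mathbf{1}_{T_{H_i} \ne U_{H_i}}$ before taking expectation yields $\E[|f(W_k)|^p \mathbf{1}_{T_{H_i} \ne U_{H_i}}] \le L_p \Pr(T_{H_i} \ne U_{H_i}) \le L_p \Delta_{|H_i|}$, as desired. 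The same works for $W_k^*$ since $\alpha_k^*$ (with $U_{H_i}$ in place of $T_{H_i}$) is also $\mathcal{F}_k$-measurable.

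Combining these bounds gives $\E |f(W_k)-f(W_k^*)|^p \le 2^p L_p \Delta_{|H_i|}$, hence $\| f(W_k)-f(W_k^*) \|_p \le 2(L_p \Delta_{|H_i|})^{1/p}$, and summing over $k \in J_i$ produces the stated estimate for $Y_i-Y_i^*$. The estimate for $Z_i-Z_i^*$ follows by the same argument applied to the corresponding $W_k$, $W_k^*$ with $k \in H_i$, the only difference being that the swapped factor is now $T_{J_{i-1}}$ vs.\ $U_{J_{i-1}}$. The only mildly delicate point in the whole argument is verifying independence of $X_k$ from $\mathcal{F}_k$ for every $k \in J_i$, which requires the independence of the family $\{(T_J,U_J)\}_J$ from $X_1,X_2,\ldots$ built into the coupling construction; everything else is a routine application of Minkowski and conditioning.
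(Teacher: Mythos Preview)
Your proof is correct and follows essentially the same approach as the paper: Minkowski to reduce to a single $k$, write $W_k=\alpha_k X_k$ and $W_k^*=\alpha_k^* X_k$ with $\alpha_k,\alpha_k^*$ measurable with respect to a $\sigma$-algebra independent of $X_k$, condition to invoke the uniform bound $L_p$, and use $W_k=W_k^*$ on $\{T_{H_i}=U_{H_i}\}$ together with $\Pr(T_{H_i}\ne U_{H_i})\le\Delta_{|H_i|}$. The only cosmetic difference is that the paper conditions on the slightly smaller $\sigma$-algebra generated by $S_M\prod_{j<i}(T_{H_j}S_{J_j})$, $T_{H_i}$, $U_{H_i}$, and $X_\ell$ for $\ell\in J_i$, $\ell<k$, and bounds $\E\bigl(|f(W_k)-f(W_k^*)|^p\mid\mathcal{F}\bigr)$ directly rather than splitting into the two summands; the arithmetic and the final constant $2^p$ are identical.
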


\begin{proof} We have $\| Y_i-Y_i^* \|_p \le \sum_{k \in J_i} \| f(W_k)-f(W_k^*) \|_p$. Let $\mathcal{F}$ be the $\sigma$-algebra generated by $S_M \prod_{j=1}^{i-1} \left( T_{H_j} S_{J_j} \right)$, $T_{H_i}$, $U_{H_i}$ and $X_{\ell}$, $\ell \in J_i$, $\ell <k$. Then $W_k=aX_k$ and $W_k^*=a^*X_k$ with some $\mathcal{F}$-measurable random variables $a,a^*$. Note that if $T_{H_i}=U_{H_i}$, then $W_k=W_k^*$. Therefore $\E \left( \left| f(W_k)-f(W_k^*)\right|^p \mid \mathcal{F} \right) \le 2^p L_p I_{\{ T_{H_i} \neq U_{H_i} \}}$. Taking the (total) expectation we get $\E \left| f(W_k)-f(W_k^*) \right|^p \le 2^p L_p \Pr (T_{H_i} \neq U_{H_i})$ $\le 2^p L_p \Delta_{|H_i|}$, and the result follows. The proof for $\| Z_i-Z_i^* \|_p$ is analogous.
\end{proof}

As a simple application of the approximating variables constructed above, we deduce moment estimates for shifted sums $\sum_{k=M+1}^{M+N} f(S_k)$ from the results of Section \ref{section3}.

\begin{cor}\label{momentscorollary}
\
\begin{enumerate}
\item[(i)] If $\sup_{c \in G} \E f(cX_1)^2 < \infty$, then for any integers $M \ge 0$ and $N \ge 1$
\[ \left\| \sum_{k=M+1}^{M+N} f(S_k) \right\|_2 = \sqrt{C(f,\nu ) N} +O_{f,\nu} \left( \log (N+1) \right) . \]
\item[(ii)] If $\sup_{c \in G} \E \left| f(cX_1)\right|^p <\infty$ for some $1 \le p \le 4$, then for any integers $M \ge 0$ and $N \ge 1$
\[ \left\| \sum_{k=M+1}^{M+N} f(S_k) \right\|_p \ll K_{f,\nu ,p} \log (N+1) + \left\{ \begin{array}{ll} \| f \|_p \left( \Delta N \right)^{1/p} & \mathrm{if} \,\,\, 1 \le p < 2, \\ \| f \|_p \sqrt{\Delta N} & \mathrm{if} \,\,\, 2 \le p \le 4. \end{array} \right. \]
with some constant $K_{f,\nu,p}>0$. In the case $p=4$ we also have
\[ \left\| \sum_{k=M+1}^{M+N} f(S_k) \right\|_4 \ll K_{f, \nu, p} \log (N+1)+ \| f \|_2 \sqrt{\Delta N} + \| f \|_4 \Delta^{3/4} N^{1/4}. \]
\end{enumerate}
\end{cor}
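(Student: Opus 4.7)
The plan is to apply the blocking scheme of Section~\ref{approxsection} in its simplest nontrivial form, namely with $R=1$. Fix $\varepsilon>0$ small enough that $q+\varepsilon<1$, and choose a constant $C=C(\nu,p)$ large enough that $(q+\varepsilon)^{C/p}\le (N+1)^{-1}$ for all $N\ge 1$ (that is, $C\ge p/|\log(q+\varepsilon)|$). Set $h=\lceil C\log(N+1)\rceil$, and take $H_1=\{M+1,\dots,M+h\}$ and $J_1=\{M+h+1,\dots,M+N\}$, extending arbitrarily to $H_2,J_2,\ldots$ past $M+N$. I would first dispose of the case $N\le h$ via the crude bound
\[
\left\|\sum_{k=M+1}^{M+N}f(S_k)\right\|_p\le N\sup_k\|f(S_k)\|_p\le N L_p^{1/p}\ll_{f,\nu,p}\log(N+1),
\]
where $\|f(S_k)\|_p\le L_p^{1/p}$ follows by conditioning on $S_{k-1}$ and invoking the definition of $L_p=\sup_{c\in G}\E|f(cX_1)|^p$.

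For $N>h$, split $\sum_{k=M+1}^{M+N}f(S_k)=\sum_{k\in H_1}f(S_k)+\sum_{k\in J_1}f(S_k)$ and bound each summand separately. The short block obeys $\|\sum_{k\in H_1}f(S_k)\|_p\le h L_p^{1/p}\ll_{f,\nu,p}\log(N+1)$. The long block has the same distribution as $Y_1$, hence the same $L^p$-norm, and the triangle inequality gives $\|Y_1\|_p\le\|Y_1^*\|_p+\|Y_1-Y_1^*\|_p$. Lemma~\ref{approxerrorlemma}, combined with $\Delta_h\le (q+\varepsilon)^h$ and our choice of $h$, bounds the coupling error by $2N(L_p\Delta_h)^{1/p}=O_{f,\nu,p}(1)$. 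Finally, Lemma~\ref{yi*zi*} shows $Y_1^*\overset{d}{=}\sum_{k=1}^{|J_1|}f(US_k)$, so Propositions~\ref{fuskvariance} and~\ref{highermoments} supply the required estimate for $\|Y_1^*\|_p$, with the trivial majorization $|J_1|\le N$.

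For part~(ii) the main terms come directly from Proposition~\ref{highermoments} (including the sharper $p=4$ estimate~\eqref{fusk4thmoment}), while $\|\sum_{k\in H_1}f(S_k)\|_p$ and $\|Y_1-Y_1^*\|_p$ are absorbed into $K_{f,\nu,p}\log(N+1)$. Part~(i) requires equality with an $O_{f,\nu}(\log(N+1))$ error, which I extract from the second-moment identity $\|Y_1^*\|_2^2=C(f,\nu)|J_1|+O_\nu(\|f\|_2^2)$ of Proposition~\ref{fuskvariance}. The elementary inequality $|\sqrt{a}-\sqrt{b}|\le\sqrt{|a-b|}$, applied twice (first to pass from $\|Y_1^*\|_2^2$ to $\sqrt{C(f,\nu)|J_1|}$, then to replace $|J_1|$ by $N$ using $|J_1|=N-h$), yields $\|Y_1^*\|_2=\sqrt{C(f,\nu)N}+O_{f,\nu}(\sqrt{\log(N+1)})$. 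The triangle and reverse triangle inequalities then combine the three estimates into $\|\sum_{k=M+1}^{M+N}f(S_k)\|_2=\sqrt{C(f,\nu)N}+O_{f,\nu}(\log(N+1))$.

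The proof is essentially bookkeeping and contains no single hard step; the only mildly subtle point is converting the variance identity of Proposition~\ref{fuskvariance} into an asymptotic for $\|Y_1^*\|_2$. It is worth emphasizing that the logarithmic remainder in part~(i) is not forced by the coupling error (which is $O_{f,\nu,p}(1)$) but by the $L^2$-norm of the short deterministic block $\sum_{k\in H_1}f(S_k)$, which is inherently of order $\log(N+1)$ regardless of refinements elsewhere.
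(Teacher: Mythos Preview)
Your proof is correct and follows essentially the same route as the paper: decompose $[M+1,M+N]$ into a short initial block $H_1$ of logarithmic length and a long block $J_1$, bound the short block trivially via $\sup_k\|f(S_k)\|_p\le L_p^{1/p}$, couple the long block to $Y_1^*$ via Lemma~\ref{approxerrorlemma}, and read off the moments of $Y_1^*$ from Propositions~\ref{fuskvariance} and~\ref{highermoments}. One small slip: the condition ``$(q+\varepsilon)^{C/p}\le(N+1)^{-1}$ for all $N\ge1$'' is impossible as written (the left side is constant); what you need, and what your parenthetical $C\ge p/|\log(q+\varepsilon)|$ actually encodes, is $(q+\varepsilon)^{h/p}\le(N+1)^{-1}$, and you should also note that $\Delta_h\le(q+\varepsilon)^h$ only holds for $h\ge k_0(\nu,\varepsilon)$, which is harmless since finitely many $N$ can be absorbed into the constant.
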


\begin{proof} We may assume that $N$ is large enough in terms of $f$, $\nu$ and $p$. Let us decompose the index set $[M+1,M+N]$ into two consecutive intervals of integers $H_1$ and $J_1$ such that $|H_1|=\lceil 4 \Delta \log N \rceil$. We then have $\sum_{k=M+1}^{M+N} f(S_k) = \sum_{k \in H_1} f(S_k)+\sum_{k \in J_1} f(S_k)$, where $\sum_{k \in J_1} f(S_k) \overset{d}{=} Y_1$. To see (i), let us write
\[ \left\| \sum_{k=M+1}^{M+N} f(S_k) \right\|_2 = \| Y_1^* \|_2 +O \left( \| Y_1-Y_1^* \|_2 + \sum_{k \in H_1} \| f(S_k) \|_2  \right) . \]
By Lemma \ref{yi*zi*} (i), here $\| Y_1^* \|_2 = \sqrt{C(f,\nu ) N} +O_{f,\nu} (1)$. Since, say, $\Delta_k^{1/k} \le (1+q)/2$ for $k \ge k_0(\nu)$ and $((1+q)/2)^{\Delta} \le ((1+q)/2)^{1/(1-q)} \le e^{-1/2}$, we have $\Delta_{|H_1|} \le N^{-2}$. Lemma \ref{approxerrorlemma} thus gives $\| Y_1-Y_1^* \|_2 \ll_{f,\nu} 1$. Finally, note that $\sup_{c \in G} \E f(cX_1)^2 < \infty$ implies $\sup_{k \ge 1} \E f(S_k)^2 < \infty$. Hence $\sum_{k \in H_1} \| f(S_k) \|_2 \ll_{f,\nu} \log N$, and (i) follows. If we use Lemma \ref{yi*zi*} (ii) instead of Lemma \ref{yi*zi*} (i), similar arguments show (ii).
\end{proof}

\begin{remark} We could easily improve the error term $K_{f,\nu ,p} \log (N+1)$ in (ii) by decomposing $[M+1,M+N]$ into more than $2$ consecutive intervals of exponentially increasing sizes.
\end{remark}

\section{Proof of the theorems}

We prove Theorem \ref{maintheoremLIL} (i) for strictly aperiodic measures and Theorem \ref{maintheoremWIENER} in Section \ref{section5.1}; the general case of Theorem \ref{maintheoremLIL} and Theorem \ref{stronguniformityLIL} in Section \ref{section5.2}; finally, Theorem \ref{maintheoremCLT} and the Remark thereafter, and Theorem \ref{stronguniformityCLT} in Section \ref{section5.3}.

\subsection{Almost sure asymptotics, strictly aperiodic case}\label{section5.1}

Suppose that $\nu$ is adapted and strictly aperiodic, and that $(\nu^{*k})_{\mathrm{abs}} \neq 0$ for some $k \ge 1$. Let $f: G \to \mathbb{R}$ be Borel measurable such that $\int_G f \, \mathrm{d} \mu =0$, and assume $\sup_{c \in G} \E |f(cX_1)|^p <\infty$. In this section we prove the strong law of large numbers \eqref{mainSLLN} in the case $1 \le p \le 2$, and the almost sure approximation by a Wiener process in Theorem \ref{maintheoremWIENER} in the case $2<p<4$. For the sake of brevity, in the proofs of this section implied constants are allowed to depend on $f$, $\nu$ and $p$.

First, assume $1 \le p \le 2$. We start by estimating the fluctuations. Recall that $\log_m$ denotes the $m$-fold iterated logarithm.
\begin{lem}\label{fluctuationlemma} For any integers $m\ge 1$, $M \ge 0$ and $N \ge N_0(f, \nu, p, m)$, and any $\lambda>\lambda_0(f, \nu, p, m)$
\[ \Pr \left( \max_{1 \le n \le N} \left| \sum_{k=M+1}^{M+n} f(S_k) \right| \ge \lambda N^{1/p} \right) \ll_{f, \nu, p, m} \frac{\left( \log_m N \right)^p}{\lambda^p} . \]
\end{lem}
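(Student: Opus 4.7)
The plan is to reduce the tail bound to the $L^p$ moment estimate
\[ \Bigl\| \max_{1 \le n \le N} \bigl| \sum_{k=M+1}^{M+n} f(S_k) \bigr| \Bigr\|_p \ll_{f,\nu,p,m} (\log_m N)\, N^{1/p} \]
via Markov's inequality applied to the $p$-th power, then to prove this estimate by induction on $m$. The base case $m=1$ is a Rademacher--Menshov argument: Corollary \ref{momentscorollary} yields the quasi-additive moment bound $\| \sum_{k=a+1}^b f(S_k) \|_p \ll (b-a)^{1/p}$ once $b-a$ is large enough to absorb the logarithmic remainder, and the standard dyadic-telescoping decomposition of any $n \in [1,N]$ as a sum of $O(\log N)$ dyadic block increments then gives $\| \max_n |S_n| \|_p \ll (\log N)\, N^{1/p}$. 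The case $p=1$ is handled throughout by embedding into $L^2$, which produces an even stronger bound.

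For the inductive step $m-1 \Rightarrow m$, partition $\{M+1, \dots, M+N\}$ into alternating buffer blocks $H_i$ of size $L := \lceil C(\nu) \log N \rceil$ (so that $\Delta_L \le N^{-10}$) and main blocks $J_i$ of size $B := \lfloor (\log N)^2 \rfloor$, producing $R = \Theta(N/B)$ blocks of each kind. Any partial sum decomposes as $S_n = S_{\text{end of last completed block before }n} + (\text{partial sum inside the current block})$, so the global maximum splits into an endpoint piece and a within-block piece. For the endpoint piece, invoke the Section~\ref{approxsection} coupling to replace $S_{\text{end of }J_j}$ by the independent approximation $\sum_{i \le j}(Y_i^* + Z_i^*)$: by Lemma \ref{yi*zi*} the variables $(Y_i^*)_i$ (and $(Z_i^*)_i$) are independent, mean-zero, and satisfy $\| Y_i^* \|_p \ll B^{1/p}$, so Doob's $L^p$-maximal inequality combined with the von Bahr--Esseen / Marcinkiewicz--Zygmund bound $\| \sum_i Y_i^* \|_p^p \ll \sum_i \| Y_i^* \|_p^p$ for $1<p\le 2$ yields $\| \max_j | \sum_{i \le j} Y_i^* | \|_p \ll (RB)^{1/p} = N^{1/p}$; the approximation defect $\sum_i \| Y_i - Y_i^* \|_p \le RB(\Delta_L)^{1/p}$ is polynomially small in $N$ by Lemma \ref{approxerrorlemma} and the choice of $L$.

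For the within-block piece, apply the inductive hypothesis at level $m-1$ to each main block of length $B$: $\| \max_{\text{within }J_j} |\text{partial sum}| \|_p \ll (\log_{m-1} B)\, B^{1/p}$, uniformly in $j$. The crude $L^p$ union bound $\| \max_j X_j \|_p^p \le \sum_j \| X_j \|_p^p$ over the $R$ main blocks then gives $\ll R^{1/p} (\log_{m-1} B)\, B^{1/p} = (\log_{m-1} B)\, N^{1/p}$. The buffer blocks $H_i$ contribute analogously, and by construction $\log_{m-1}((\log N)^2) \asymp \log_m N$, so the total bound is $\ll (\log_m N)\, N^{1/p}$, and Markov's inequality delivers the probability bound with the claimed constants. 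The main obstacle is calibrating the block size $B$ so that one step of the induction trades exactly one iterated logarithm for a lower-order factor; the independence produced by the Section~\ref{approxsection} coupling is essential here because it prevents Doob's inequality from introducing any logarithmic loss on the endpoint sums, so that the entire $\log_m N$ factor arises from the recursive within-block analysis rather than from the block-endpoint skeleton.
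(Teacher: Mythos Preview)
Your approach follows the same architecture as the paper's proof: induction on $m$, with the base case via Corollary~\ref{momentscorollary} and Rademacher--Menshov, and the inductive step via a block decomposition, the Section~\ref{approxsection} coupling, and a maximal inequality for the independent block sums $Y_i^*$. The paper carries out the induction directly at the level of probability estimates and uses L\'evy's inequality (rather than Doob's) to control $\max_{r}\left|\sum_{i\le r} Y_i^*\right|$; since L\'evy's inequality requires no moment hypothesis on the summands, it applies uniformly in $p\ge 1$. The paper also takes all blocks of size $\Theta(\log N)$, so $R=\Theta(N/\log N)$, whereas you take main blocks of size $(\log N)^2$; both choices trade exactly one iterated logarithm per step, so this difference is cosmetic.

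Your Doob-plus-von-Bahr--Esseen route is fine for $1<p\le 2$, but the treatment of $p=1$ has a gap: the hypothesis $\sup_{c\in G}\E|f(cX_1)|<\infty$ does not imply $f\in L^2(G)$, so ``embedding into $L^2$'' is not available, and Doob's inequality does not give an $L^1$ bound on the running maximum. The fix is easy---either use Doob's weak-$L^1$ inequality $\Pr(\max_j|M_j|\ge\lambda)\le \E|M_R|/\lambda$ directly on the probability side for this endpoint term, or switch to L\'evy's inequality as the paper does; in either case you must abandon the uniform $L^p$-moment framing at $p=1$ and argue with tail probabilities, which is precisely why the paper organises the whole induction around probability bounds rather than moment bounds.
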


\begin{proof} We use induction on $m$. Corollary \ref{momentscorollary} (ii) and the Rademacher--Menshov inequality \cite[Theorem F]{MO} give $\left\| \max_{1 \le n \le N} \left| \sum_{k=M+1}^{M+n} f(S_k) \right| \right\|_p \ll N^{1/p} \log (N+1)$. The $m=1$ case thus follows from the Markov inequality. Now assume the claim holds for some $m \ge 1$, and let us prove it for $m+1$. Let us decompose the index set $[M+1,M+N]$ into consecutive intervals of integers $H_1, J_1, H_2, J_2, \dots, H_R, J_R$, as in Section \ref{approxsection}, such that $|H_i|, |J_i| \ge 4 \Delta \log N$ for all $i$, and $R=\Theta (N/\log N)$. Similarly to the proof of Corollary \ref{momentscorollary} we have $\Delta_{|H_i|}, \Delta_{|J_i|} \le N^{-2}$. Let $M+n_r=\max J_r$, and recall that for any $2 \le r \le R$
\[ \sum_{M+1}^{M+n_r} f(S_k) = \sum_{k \in H_1 \cup J_1} f(S_k) + \sum_{i=2}^r \sum_{k \in J_i} f(S_k) + \sum_{i=2}^r \sum_{k \in H_i} f(S_k) . \]
Here the variables $\sum_{i=2}^r \sum_{k \in J_i} f(S_k)$, $2 \le r \le R$ have the same joint distribution as $\sum_{i=2}^r Y_i$, $2 \le r \le R$; similarly, $\sum_{i=2}^r \sum_{k \in H_i} f(S_k)$, $2 \le r \le R$ have the same joint distribution as $\sum_{i=2}^r Z_i$, $2 \le r \le R$. Let us introduce the random events
\[ \begin{split} A&=\left\{ \max_{2 \le r \le R} \left| \sum_{i=1}^r Y_i \right| \ge \frac{\lambda}{4} N^{1/p} \right\}, \\ B&=\left\{ \max_{2 \le r \le R} \left| \sum_{i=2}^r Z_i \right| \ge \frac{\lambda}{4} N^{1/p} \right\}, \\ C_i&= \left\{ \max_{n \in H_i \cup J_i} \left| \sum_{k=\min H_i}^{n} f(S_k) \right| \ge \frac{\lambda}{4} N^{1/p} \right\} . \end{split} \]
The event in the claim of the lemma is a subset of $A \cup B \cup \bigcup_{i=1}^R C_i$. Applying the inductive hypothesis on the interval $H_i \cup J_i$ of length $\ll \log N$ we get $\Pr (C_i) \ll \lambda^{-p} (\log N /N) (\log_m \log N)^p$, and hence $\Pr \left( \bigcup_{i=1}^R C_i \right) \ll \lambda^{-p} (\log_{m+1}N)^p$.

Recall from Lemma \ref{yi*zi*} (ii) that $\left\| \sum_{i=r+1}^s Y_i^* \right\|_p \ll \left( \sum_{i=r+1}^s |J_i| \right)^{1/p} \ll N^{1/p}$ for any $1 \le r <s \le R$. It follows that the median of $\sum_{i=r+1}^s Y_i^*$ is also $\ll N^{1/p}$, and hence from L\'evy's inequality (see e.g.\ \cite[p.\ 51]{P}) we get
\begin{equation}\label{levyinequality}
\Pr \left( \max_{2 \le r \le R} \left| \sum_{i=2}^r Y_i^* \right| \ge \frac{\lambda}{8} N^{1/p} \right) \le 2 \Pr \left( \left| \sum_{i=2}^R Y_i^* \right| \ge \frac{\lambda}{16} N^{1/p} \right) \ll \frac{1}{\lambda^p}
\end{equation}
provided $\lambda$ is large enough. On the other hand, Lemma \ref{approxerrorlemma} gives $\E |Y_i-Y_i^*|^p \ll |J_i|^p \Delta_{|H_i|} \ll N^{-2}(\log N)^p$,
and thus
\[ \Pr \left( |Y_i-Y_i^*| \ge \frac{\lambda}{8R} N^{1/p} \right) \ll \frac{(\log N)^p}{N^2} \cdot \frac{R^p}{\lambda^p N} \ll \frac{1}{\lambda^p N}. \]
Therefore $\Pr \left( \sum_{i=2}^R |Y_i-Y_i^*| \ge (\lambda /8) N^{1/p} \right) \ll \lambda^{-p}$. This relation, together with \eqref{levyinequality} shows $\Pr (A) \ll \lambda^{-p}$. Repeating the same arguments for $Z_i$ and $Z_i^*$ we get $\Pr (B) \ll \lambda^{-p}$. Hence $\Pr \left( A \cup B \cup \bigcup_{i=1}^R C_i \right) \ll \lambda^{-p} (\log_{m+1} N)^p$, as claimed.
\end{proof}

We are now ready to prove \eqref{mainSLLN}. Fix $m \ge 1$ and $\varepsilon >0$. Let us decompose the set of positive integers into consecutive intervals of integers $H_1, J_1, H_2, J_2, \dots$, as in Section \ref{approxsection} (with the choice $M=0$), such that, say, $|H_i|=|J_i|=i$ for all $i \ge 1$. Similarly to the proof of Corollary \ref{momentscorollary} we have $i \ge 16 \Delta \log i$, and so $\Delta_{|H_i|}=\Delta_{|J_i|} \le i^{-8}$ for all integers $i$ large enough in terms of $\nu$.

Consider \eqref{mainSLLN} along the subsequence $N_R = \max J_R = \Theta (R^2)$. We have
\[ \sum_{k=1}^{N_R} f(S_k) = \sum_{k \in H_1 \cup J_1} f(S_k) + \sum_{i=2}^R \sum_{k \in J_i} f(S_k) + \sum_{i=2}^R \sum_{k \in H_i} f(S_k) . \]
Here the sequences $\sum_{i=2}^R \sum_{k \in J_i} f(S_k)$ and $\sum_{i=2}^R \sum_{k \in H_i} f(S_k)$, $R=2,3,\dots$ have the same distribution as $\sum_{i=2}^R Y_i$ and $\sum_{i=2}^R Z_i$, $R=2,3,\dots$, respectively. Using Lemma \ref{yi*zi*} (ii) we get $\sum_{i=i_0(m)}^{\infty} \E |Y_i^*|^p/(i \varphi_{m, \varepsilon}(i)) \ll \sum_{i=i_0(m)}^{\infty} 1/\varphi_{m, \varepsilon}(i) < \infty$. By a classical form of the strong law of large numbers (see e.g.\ \cite[p.\ 209]{P}) and $R^{1/p} \varphi_{m, \varepsilon} (R)^{1/p} = \Theta \left( \varphi_{m,\varepsilon} (N_R)^{1/p} \right)$, we have
\begin{equation}\label{SLLNYi*}
\lim_{R \to \infty}  \frac{\sum_{i=1}^{R} Y_i^*}{\varphi_{m, \varepsilon}(N_R)^{1/p}} =0 \quad \mathrm{a.s.}
\end{equation}
Lemma \ref{approxerrorlemma} gives $\E |Y_i-Y_i^*|^p \ll |J_i|^p \Delta_{|H_i|} \ll i^{p-8}$, and hence $\Pr \left( |Y_i-Y_i^*| \ge 1/i^2 \right) \ll i^{3p-8} \le i^{-2}$. By the Borel--Cantelli lemma $\sum_{i=2}^{\infty} |Y_i-Y_i^*|<\infty$ a.s., and consequently \eqref{SLLNYi*} remains true if we replace $Y_i^*$ by $Y_i$. Repeating the same arguments for $Z_i$ and $Z_i^*$, we obtain \eqref{mainSLLN} along the subsequence $N_R=\max J_R$.

On the other hand, applying Lemma \ref{fluctuationlemma} with $m+2$ on the interval $H_R \cup J_R$ of length $2R$, we get
\[ \Pr \left( \max_{N \in H_R \cup J_R} \left| \sum_{k=\min H_R}^{N} f(S_k) \right| \ge R^{1/p} \varphi_{m+1, \varepsilon} (R)^{1/p} \right) \ll \frac{(\log_{m+2}(R))^p}{\varphi_{m+1,\varepsilon} (R)} . \]
The Borel--Cantelli lemma shows that with probability $1$, for any $R \ge 1$ and any $N \in H_R \cup J_R$ the fluctuation satisfies $\left| \sum_{k=\min H_R}^{N} f(S_k) \right| \ll_{\omega} \varphi_{m+1, \varepsilon}(N)^{1/p}$ with an implied constant depending on the point $\omega$ of the probability space. Therefore \eqref{mainSLLN} holds along all $N$. This finishes the proof of Theorem \ref{maintheoremLIL} (i) under the extra condition that $\nu$ is strictly aperiodic.

\begin{proof}[Proof of Theorem \ref{maintheoremWIENER}] Assume $p=2+\delta$ for some $0<\delta <2$, and $C(f, \nu )>0$. Let us decompose the set of positive integers into consecutive intervals of integers $H_1, J_1, H_2, J_2, \ldots$, as in Section \ref{approxsection} (with the choice $M=0$), such that $|H_i|=\lceil 22 \Delta \log (i+1) \rceil$ and $|J_i|=\lceil i^{\delta /(4+2\delta)} \rceil$ for all $i \ge 1$. As before, we have $\Delta_{|H_i|}\le i^{-11}$ and $\Delta_{|J_i|}\le i^{-11}$ for all integers $i$ large enough in terms of $\nu$ and $\delta$.

Corollary \ref{momentscorollary} (ii) and the Erd\H{o}s--Stechkin inequality \cite[Theorem A]{MO} give $\left\| \max_{1 \le n \le N} \left| \sum_{k=M+1}^{M+n} f(S_k) \right| \right\|_{2+\delta} \ll \sqrt{N}$ for any $M \ge 0$ and $N \ge 1$. Therefore for any $R \ge 1$ we have
\[ \Pr \left( \max_{N \in H_R \cup J_R} \left| \sum_{k=\min H_R}^N f(S_k) \right| \ge R^{1/2} \right) \ll \frac{|J_R|^{1+\delta /2}}{R^{1+\delta /2}} \ll \frac{1}{R^{1+\delta /4}} . \]
The Borel--Cantelli lemma shows that with probability $1$, for any $R \ge 1$ and any $N \in H_R \cup J_R$ the fluctuation satisfies $\left| \sum_{k=\min H_R}^{N} f(S_k) \right| \ll_{\omega} R^{1/2}$ with an implied constant depending on the point $\omega$ of the probability space. For any $t \ge 1$ let $R(t)$ denote the positive integer for which $\lfloor t \rfloor \in H_{R(t)} \cup J_{R(t)}$. Summing over $\min J_1 \le k \le \max J_{R(t)}$ instead of $1 \le k \le t$, we thus obtain
\begin{equation}\label{fSkdecomposition}
\sum_{k \le t} f(S_k) = \sum_{i=1}^{R(t)} \sum_{k \in J_i} f(S_k) + \sum_{i=2}^{R(t)} \sum_{k \in H_i} f(S_k) + O_{\omega} \left( R(t)^{1/2} \right) \quad \mathrm{a.s.}
\end{equation}
Here $\sum_{i=1}^{R(t)} \sum_{k \in J_i} f(S_k) \overset{d}{=} \sum_{i=1}^{R(t)} Y_i$ and $\sum_{i=2}^{R(t)} \sum_{k \in H_i} f(S_k) \overset{d}{=} \sum_{i=2}^{R(t)} Z_i$ in the Skorokhod space $D[0,\infty)$. From Lemma \ref{approxerrorlemma} we get $\E |Y_i-Y_i^*|^{2+\delta} \ll |J_i|^{2+\delta} \Delta_{|H_i|} \ll i^{-11+\delta /2}$. Hence $\Pr (|Y_i-Y_i^*| \ge 1/i^2) \ll i^{-7+5\delta /2} \le i^{-2}$, so by the Borel--Cantelli lemma $\sum_{i=1}^{\infty} |Y_i-Y_i^*| < \infty$ a.s. Clearly the same holds for $Z_i-Z_i^*$.

By Lemma \ref{yi*zi*} we have $\| Z_i^* \|_{2+\delta} \ll \sqrt{\log i}$, and $\sum_{i=2}^R \E |Z_i^*|^2 =\Theta (R \log R)$. It follows (see e.g.\ \cite[p.\ 246]{P}) that $\sum_{i=2}^R Z_i^*$ satisfies the law of the iterated logarithm; in particular, $\left| \sum_{i=2}^R Z_i^* \right| \ll_{\omega} \sqrt{R \log R \log \log R}$ a.s. Note that $R(t)^{1/2} \ll t^{(2+\delta)/(4+3\delta)}$ and $(2+\delta)/(4+3\delta) < 1/2-\delta /20$ whenever $0<\delta <2$. Thus the second double sum on the right hand side of \eqref{fSkdecomposition} is $o(t^{1/2-\delta /20})$ a.s., and consequently the processes $\sum_{k \le t} f(S_k)$ and $\sum_{i=1}^{R(t)} Y_i^*$ are $o\left( t^{1/2-\delta /20} \right)$-equivalent.

A special case of a theorem of Strassen \cite[Theorem 4.4]{STR} states the following. Given independent random variables $\zeta_i$, $i=1,2,\dots$ with $\E \zeta_i =0$ and $V_R=\sum_{i=1}^R \E |\zeta_i|^2 \to \infty$, for any $t \ge V_1$ let $R'(t)$ denote the positive integer for which $V_{R'(t)} \le t < V_{R'(t)+1}$. If $\sum_{i=1}^{\infty} \E |\zeta_i|^p / V_i^{\theta p/2} < \infty$ for some $p>2$ and $0 \le \theta \le 1$, then the processes $\sum_{i=1}^{R'(t)} \zeta_i$ and $W(t)$ are $o\left( t^{(1+\theta )/4} \log t \right)$-equivalent, where $W(t)$ is a standard Wiener process.

We apply Strassen's theorem to $\zeta_i=Y_i^*/\sqrt{C(f, \nu)}$, $i=1,2,\dots$. By Lemma \ref{yi*zi*} we have $V_R=\sum_{i=1}^R \E |\zeta_i|^2 = \sum_{i=1}^R |J_i| + O(R) = \Theta \left( R^{1+\delta / (4+2 \delta)} \right)$ and $\E |\zeta_i|^{2+\delta} \ll |J_i|^{1+\delta /2} \ll i^{\delta /4}$. Hence $\sum_{i=1}^{\infty} \E |\zeta_i|^{2+\delta} / V_i^{\theta (1+\delta /2)} < \infty$ for any $\theta > (4+\delta)/(4+3\delta)$. Choosing $\theta$ close enough to $(4+\delta)/(4+3\delta)$, we have $(1+\theta )/4 < 1/2-\delta /20$, and so the processes $\sum_{i=1}^{R'(t)} Y_i^*/\sqrt{C(f, \nu)}$ and $W(t)$ are $o(t^{1/2-\delta /20})$-equivalent; clearly so are $\sum_{i=1}^{R'(t)}Y_i^*$ and $\sqrt{C(f,\nu )} W(t)$.

Finally, we show that the processes $Y(t)=\sum_{i=1}^{R'(t)}Y_i^*$ and $\sum_{i=1}^{R(t)}Y_i^*$ are $o(t^{1/2 - \delta /20})$-equivalent. Clearly $\max J_R=\sum_{i=1}^R (|H_i|+|J_i|)$, and recall that $V_R = \sum_{i=1}^R |J_i| + O(R)$. Therefore for all large enough integer $r$, on the interval $V_r \le t <V_{r+1}$ we have $R'(t)=r$ and $R(t)=R'(V_r-s)$ for some $0 \le s \ll V_r^{(4+2\delta )/(4+3 \delta)} \log V_r$, and hence $\sum_{i=1}^{R'(t)} Y_i^*=Y(V_r)$ and $\sum_{i=1}^{R(t)} Y_i^*=Y(V_r-s)$. Letting $K_r = c V_r^{(4+2\delta )/(4+3 \delta)} \log V_r$ with a large enough constant $c>0$, it will thus be enough to prove that
\begin{equation}\label{processsup}
\sup_{0 \le s \le K_r} |Y(V_r)-Y(V_r-s)| = o\left( V_r^{1/2-\delta /20} \right) \quad \mathrm{a.s.}
\end{equation}
Recalling the distribution of the running maximum of a Wiener process, we have
\[ \Pr \left( \sup_{0 \le s \le K_r} |W(V_r)-W(V_r-s)| \ge \lambda \sqrt{K_r} \right) \ll e^{-\lambda^2 /2} . \]
Choosing, say, $\lambda = 2 \sqrt{\log V_r}$ and noting $(2+\delta )/(4+3 \delta) < 1/2-\delta /20$, the Borel--Cantelli lemma shows that the process $W(t)$ satisfies the property in \eqref{processsup}; clearly so does $\sqrt{C(f, \nu )} W(t)$. Since \eqref{processsup} is invariant under $o(t^{1/2-\delta /20})$-equivalence, $Y(t)$ also satisfies \eqref{processsup}. This finishes the proof in the case $C(f, \nu )>0$.

If $C(f, \nu )=0$, the proof is much simpler. In this case Lemma \ref{yi*zi*} gives $\E |Y_i^*|^2 \ll 1$. Therefore $\sum_{i=1}^{\infty} \E |Y_i^*|^2/i^{1+2\varepsilon} < \infty$ for any $\varepsilon >0$, and by the strong law of large numbers $\sum_{i=1}^{R(t)} Y_i^* = o\left( R(t)^{1/2+\varepsilon} \right)$ a.s. Similarly, $\sum_{i=2}^{R(t)} Z_i^*=o\left( R(t)^{1/2+\varepsilon} \right)$ a.s. Using these relations instead of the law of the iterated logarithm and Strassen's theorem and noting that $R(t)^{1/2+\varepsilon} \ll t^{1/2-\delta /20}$ for small enough $\varepsilon >0$, we get $\sum_{k \le t} f(S_k)=o\left( t^{1/2-\delta /20} \right)$ a.s., as claimed.
\end{proof}

\subsection{Almost sure asymptotics, general case}\label{section5.2}

\begin{proof}[Proof of Theorem \ref{maintheoremLIL}] Under the extra condition that $\nu$ is strictly aperiodic, we proved claim (i) in Section \ref{section5.1}, whereas claim (ii) follows from Theorem \ref{maintheoremWIENER}. We now show that the condition of strict aperiodicity can be removed, and prove the general case of Theorem \ref{maintheoremLIL}.

Assume that the pair $(G,\nu)$ satisfies the conditions of Theorem \ref{maintheoremLIL}; that is, $G$ is a compact metrizable group, and $\nu$ is a Borel probability measure on $G$ such that $\nu$ is adapted, and $(\nu^{*k})_{\mathrm{abs}} \neq 0$ for some $k \ge 1$. We shall use the notation $\mu_G$ for the normalized Haar measure on $G$. It is not difficult to see that if $\nu_1$ and $\nu_2$ are Borel probability measures on $G$, then $\mathrm{supp}\, (\nu_1 * \nu_2)=(\mathrm{supp}\, \nu_1)(\mathrm{supp}\, \nu_2)$ (see e.g.\ \cite[Lemma 2]{U}). Therefore $\mathrm{supp}\, \nu^{*k}=(\mathrm{supp}\, \nu)^k$, where we use the notation $A^k=\{ a_1 a_2 \cdots a_k : a_1, a_2, \dots, a_k \in A \}$. In particular, $\mathrm{supp}\, \nu^{*(k+1)}$ contains a translate of $\mathrm{supp}\, \nu^{*k}$, so the sequence $\mu_G (\mathrm{supp}\, \nu^{*k})$ is nondecreasing. Let $\alpha (G,\nu) = \lim_{k \to \infty} \mu_G (\mathrm{supp}\, \nu^{*k})$. Note that $\alpha (G, \nu) >0$ because $(\nu^{*k})_{\mathrm{abs}} \neq 0$ for some $k \ge 1$. The following simple observation is a special case of \cite[Theorem 14]{G}. For the sake of completeness we include a short proof.
\begin{lem}\label{subgrouplemma} Let $G$ be a compact metrizable group. If $K \subseteq G$ is nonempty and closed, and $K^2 \subseteq K$, then $K$ is a subgroup.
\end{lem}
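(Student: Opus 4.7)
The plan is to verify the two missing subgroup axioms — namely that $e \in K$ and that $K$ is closed under inversion — while closure under multiplication is exactly the hypothesis $K^2 \subseteq K$. The key tool is that $K$, as a closed subset of the compact metrizable group $G$, is itself compact and metrizable, so every sequence in $K$ has a convergent subsequence with limit back in $K$.

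Fix an arbitrary $x \in K$. An easy induction from $K^2 \subseteq K$ yields $x^n \in K$ for every $n \ge 1$. By sequential compactness of $K$ there exists a strictly increasing sequence of positive integers $n_k$ and an element $y \in K$ with $x^{n_k} \to y$. After thinning the subsequence — for instance keeping only every second term — I may additionally assume $n_{k+1} - n_k \ge 2$ for all $k$, without disturbing the limit.

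Joint continuity of multiplication and inversion in $G$ then gives
\[ x^{n_{k+1} - n_k} = x^{n_{k+1}} (x^{n_k})^{-1} \longrightarrow y \cdot y^{-1} = e, \]
and since each $x^{n_{k+1} - n_k}$ is a positive power of $x$, it lies in $K$; closedness of $K$ forces $e \in K$. The same device handles inversion:
\[ x^{n_{k+1} - n_k - 1} = x^{n_{k+1} - n_k} \cdot x^{-1} \longrightarrow e \cdot x^{-1} = x^{-1}, \]
and because $n_{k+1} - n_k - 1 \ge 1$, every term on the left is a positive power of $x$, hence in $K$, so $x^{-1} \in K$ as well. Combined with the given $K \cdot K \subseteq K$, this proves $K$ is a subgroup.

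I do not anticipate any real obstacle. The only point that requires care is arranging the subsequence gaps to be at least $2$, which is a trivial thinning and is needed precisely so that the power $x^{n_{k+1} - n_k - 1}$ remains a positive power of $x$ and therefore sits in $K$. Metrizability of $G$ is used only to convert the compactness of $K$ into sequential compactness, which matches the hypothesis of the lemma exactly.
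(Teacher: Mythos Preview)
Your proof is correct and shares the paper's core idea: take powers of an arbitrary element, extract a convergent subsequence by compactness, and use continuity of the group operations. The executions differ in minor but noticeable ways. For $e \in K$ you argue directly via $x^{n_{k+1}-n_k} = x^{n_{k+1}}(x^{n_k})^{-1} \to yy^{-1} = e$, whereas the paper first shows $a^{-n}b \in K$ for every fixed $n$, passes to a further subsequence $a^{-n_k}b \to c \in K$, and deduces $c=1$ from $b = a^{n_k}\cdot a^{-n_k}b \to bc$. For inverses you reuse the same thinned subsequence (arranging $n_{k+1}-n_k \ge 2$ so that $x^{n_{k+1}-n_k-1}$ remains a positive power), while the paper instead applies its identity argument to the translate $aK$, observing $(aK)^2 \subseteq aK$ and concluding $1 \in aK$. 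Your route is a bit more streamlined and needs only one subsequence; the paper's device of passing to $aK$ for the inverse step avoids tracking subsequence gaps.
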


\begin{proof} Let $a \in K$ be arbitrary. By assumption $a^n \in K$ for all $n \ge 1$. Using the compactness of $K$ we have $a^{n_k} \to b \in K$ as $k \to \infty$ for some subsequence $a^{n_k}$. For any fixed $n \ge 1$ we have $a^{n_k-n} \to a^{-n}b \in K$ as $k \to \infty$. After replacing $n_k$ by another subsequence we may assume that $a^{n_k} \to b \in K$ and $a^{-n_k}b \to c \in K$ as $k \to \infty$ for some $c$. Then $b=a^{n_k}a^{-n_k}b \to bc$, hence $c=1 \in K$. It remains to prove that for any $a \in K$ we have $a^{-1} \in K$. But $aK$ is also nonempty and closed, and $(aK)^2 \subseteq aK$. By the previous argument $1 \in aK$, therefore $a^{-1} \in K$.
\end{proof}

Assume now, that there exists a proper closed normal subgroup $H\lhd G$ such that $\mathrm{supp}\, \nu \subseteq aH$ for some coset $aH$. Since $H$ is normal, we have $\mathrm{supp}\, \nu^{*k} \subseteq a^kH$ for all $k \ge 1$. Thus $\mu_G (H)=\mu_G (a^kH) \ge \mu_G (\mathrm{supp}\, \nu^{*k})$, and so $\mu_G (H) \ge \alpha (G, \nu) >0$. In particular, $aH$ has finite order $d$ in the factor group $G/H$. Since $\bigcup_{i=1}^{d}a^i H$ is a closed subgroup of $G$ containing $\mathrm{supp}\, \nu$, and $\nu$ is assumed to be adapted, we have $G=\bigcup_{i=1}^{d}a^i H$ and $|G:H|=d$. As $\mathrm{supp}\, \nu^{*d} \subseteq H$, we can view $\nu^{*d}$ as a Borel probability measure on the compact metrizable group $H$. Note that $\mu_H (B)=d \cdot \mu_G (B)$ ($B \subseteq H$ Borel) is the normalized Haar measure on $H$. Clearly $(\nu^{*d})^{*k}$ has an absolutely continuous component with respect to $\mu_H$ for some $k \ge 1$. It is also not difficult to see that $\nu^{*d}$ is adapted on $H$. Indeed, suppose $K<H$ is a proper closed subgroup for which $\mathrm{supp}\, \nu^{*d} \subseteq K$. Consider $C=\bigcup_{i=1}^{d} \mathrm{supp}\, \nu^{*i}K$, and note that here $\mathrm{supp}\, \nu^{*i}K \subseteq a^iH$; in particular, $C \neq G$. On the other hand, writing an arbitrary integer $k \ge 1$ in the form $k=nd+i$, $1 \le i \le d$ we have $\mathrm{supp}\, \nu^{*k} = (\mathrm{supp}\, \nu^{*i}) (\mathrm{supp}\, \nu^{*d})^n \subseteq \mathrm{supp}\, \nu^{*i} K$. Therefore the topological closure $\overline{\bigcup_{k=1}^{\infty} \mathrm{supp}\, \nu^{*k}}$ is a subset of $C \neq G$. Using Lemma \ref{subgrouplemma} we get that $\overline{\bigcup_{k=1}^{\infty} \mathrm{supp}\, \nu^{*k}}$ is a proper closed subgroup of $G$, contradicting the adaptedness of $\nu$. Altogether, we find that the pair $(H,\nu^{*d})$ satisfies the conditions of Theorem \ref{maintheoremLIL}. Observe, moreover, that $\alpha (H, \nu^{*d}) = d \cdot \alpha (G, \nu)$.

Assume in addition, that the pair $(H, \nu^{*d})$ satisfies the claims of Theorem \ref{maintheoremLIL}. We now prove that under all these assumptions $(G, \nu)$ also satisfies the claims of Theorem \ref{maintheoremLIL}. Fix a Borel measurable function $f:G \to \mathbb{R}$ such that $\sup_{c \in G} |f(cX_1)|^p < \infty$ for some $p \ge 1$. It will be enough to prove that for any $1 \le i \le d$ we have
\begin{equation}\label{sumfi1}
\lim_{N \to \infty} \frac{\left| \sum_{k=1}^N f(S_{i+kd}) - d N \int_{a^i H} f \, \mathrm{d} \mu_G \right|}{\varphi_{m,\varepsilon}(N)^{1/p}} =0 \qquad \mathrm{a.s.}
\end{equation}
for any $m \ge 1$ and $\varepsilon >0$ in the case $1 \le p \le 2$, and
\begin{equation}\label{sumfi2}
\limsup_{N \to \infty} \frac{\left| \sum_{k=1}^N f(S_{i+kd}) - d N \int_{a^i H} f \, \mathrm{d} \mu_G \right|}{\sqrt{N \log \log N}} < \infty \qquad \mathrm{a.s.}
\end{equation}
in the case $p>2$. Fix $1 \le i \le d$, and let $\mathcal{F}_i$ denote the $\sigma$-algebra generated by $X_1, X_2, \dots , X_i$. Letting $Y_n=\prod_{j=i+(n-1)d+1}^{i+nd} X_j$, the variables $Y_1, Y_2, \dots$ are i.i.d.\ $H$-valued random variables with distribution $\nu^{*d}$, independent of $\mathcal{F}_i$. Let $b=X_1 X_2 \cdots X_i$, and note $b \in a^i H$ a.s. Let $g:H \to \mathbb{R}$, $g(x)=f(bx)$, and observe $\sup_{c \in H} \E \left( |g(cY_1)|^p \mid \mathcal{F}_i \right) < \infty$ a.s.\ and $\int_H g \, \mathrm{d} \mu_H = d \int_{a^i H} f \, \mathrm{d}\mu_G$ a.s. We thus have
\[ \sum_{k=1}^N f(S_{i+kd}) - d N \int_{a^i H} f \, \mathrm{d} \mu_G = \sum_{k=1}^N g(Y_1 Y_2 \cdots Y_k) -N \int_H g \, \mathrm{d}\mu_H \quad \mathrm{a.s.} \]
By the assumption that $(H,\nu^{*d})$ satisfies the claims of Theorem \ref{maintheoremLIL}, we have
\[ \Pr \left( \lim_{N \to \infty} \frac{\left| \sum_{k=1}^N g(Y_1 Y_2 \cdots Y_k)-N \int_H g \, \mathrm{d}\mu_H \right|}{\varphi_{m, \varepsilon}(N)^{1/p}} =0 \,\,\,\, \vline \,\,\,\, \mathcal{F}_i \right) =1 \]
in the case $1 \le p \le 2$, and
\[ \Pr \left( \limsup_{N \to \infty} \frac{\left| \sum_{k=1}^N g(Y_1 Y_2 \cdots Y_k)-N \int_H g \, \mathrm{d}\mu_H \right|}{\sqrt{N \log \log N}} < \infty \,\,\,\, \vline \,\,\,\, \mathcal{F}_i \right) =1. \]
in the case $p>2$. Taking the (total) probability, \eqref{sumfi1} and \eqref{sumfi2} follow.

Finally, we prove that the pair $(G,\nu)$ satisfies the claims of Theorem \ref{maintheoremLIL}. Let $H_0=G$. If $\nu$ is not strictly aperiodic in $H_0$, then let $H_1 \lhd H_0$ be a proper closed normal subgroup such that $\mathrm{supp} \, \nu$ is contained in a coset of $H_1$, and let $d_1=$ $|H_0:H_1|$. As seen above, the pair $(H_1, \nu^{*d_1})$ satisfies the conditions of Theorem \ref{maintheoremLIL}, hence we can iterate this procedure. We obtain a sequence $H_0 \rhd H_1 \rhd \cdots \rhd H_j$, where $H_i$ is a proper closed normal subgroup of $H_{i-1}$ with index $d_i=|H_{i-1}:H_i|$, and $\mathrm{supp} \, \nu^{*(d_1\cdots d_{i-1})}$ is contained in a coset of $H_i$ for all $1 \le i \le j$. The procedure ends after step $j$ if $\nu^{*(d_1 \cdots d_j)}$ is strictly aperiodic in $H_j$. Note that $1 \ge \alpha (H_i, \nu^{*(d_1 \cdots d_i)}) = d_1 \cdots d_i \alpha (G,\nu)$, therefore the procedure terminates after finitely many steps. We prove the claims by induction on $j$. If $j=0$, that is, $\nu$ is strictly aperiodic, the claims have already been proved. To prove the inductive step from $j-1$ to $j$, we first apply the inductive hypothesis to $(H_1,\nu^{*d_1})$, then the arguments above to conclude that $(G,\nu)$ satisfies the claims of Theorem \ref{maintheoremLIL}.
\end{proof}

\begin{proof}[Proof of Theorem \ref{stronguniformityLIL}] The implication (iii)$\Rightarrow$(ii) is trivial, whereas (i)$\Rightarrow$(iii) is a special case of Theorem \ref{maintheoremLIL}. Let us finally prove (ii)$\Rightarrow$(i). First, suppose that $\nu^{*k}$ is singular with respect to $\mu$ for every $k \ge 1$. Then there exists a Borel set $B \subseteq G$ such that $\mu (B)=0$ and $\Pr (S_k \in B)=1$ for every $k \ge 1$. Hence the indicator function $f=I_B$ does not satisfy (ii), giving a contradiction. Suppose next, that $\nu$ is not adapted; that is, there exists a proper closed subgroup $H<G$ such that $\Pr (X_1 \in H)=1$. Then $\Pr (S_k \in H)=1$ for all $k \ge 1$. Since every nonempty open subset of $G$ has positive Haar measure, we have $\mu (H)<1$. Therefore $f=I_H$ does not satisfy (ii), giving a contradiction.
\end{proof}

\subsection{Central limit theorem}\label{section5.3}

\begin{proof}[Proof of Theorem \ref{maintheoremCLT}] In this proof implied constants will be universal. Claim (ii) follows from Corollary \ref{momentscorollary} (i). To see (i), fix a positive integer $N$ large enough in terms of $f$, $\nu$ and $\delta$, and let us prove \eqref{BerryEsseen}. Let $E_N=N^{-\delta/(2+2\delta )} \log^{\delta/(1+\delta)} N$ and $K=\Delta \left( \| f \|_{2+\delta} /\sqrt{C(f, \nu)} \right)^{(2+\delta)/(1+\delta)}$. Let us decompose the set $\{ 1,2,\dots, N \}$ into consecutive intervals of integers $H_1, J_1, \dots, H_R, J_R$, as in Section \ref{approxsection} (with the choice $M=0$), such that $|H_i|= \lceil 4 \Delta \log N \rceil$ and $|J_i|= \Theta \left( (\Delta /K^2) N ^{\delta/(1+\delta)} \log^{2/(1+\delta)}N \right)$ for all $1 \le i \le R$. As in the proof of Corollary \ref{momentscorollary}, we have $\Delta_{|H_i|} \le N^{-2}$, and clearly the same holds for $\Delta_{|J_i|}$.

Recall that
\[ \sum_{k=1}^N f(S_k) = \sum_{k \in H_1} f(S_k) + \sum_{i=1}^R \sum_{k \in J_i} f(S_k) + \sum_{i=2}^R \sum_{k \in H_i} f(S_k), \]
where $\sum_{i=1}^R \sum_{k \in J_i} f(S_k) \overset{d}{=} \sum_{i=1}^R Y_i$ and $\sum_{i=2}^R \sum_{k \in H_i} f(S_k) \overset{d}{=} \sum_{i=2}^R Z_i$. From Lemma \ref{yi*zi*} and the classical Lyapunov condition (see e.g.\ \cite[p.\ 154]{P}) we get
\begin{equation}\label{yi*berryesseen}
\Pr \left( \frac{\sum_{i=1}^R Y_i^*}{\sqrt{\sum_{i=1}^R \E (Y_i^*)^2}} <x \right) = \Phi (x) + O \left( K E_N \right) .
\end{equation}
Here $\sum_{i=1}^R \E (Y_i^*)^2 = C(f,\nu ) N+O_{f,\nu}(N^{1/(1+\delta)})$, therefore the error of replacing the normalizing factor on the left hand side of \eqref{yi*berryesseen} by $\sqrt{C(f,\nu ) N}$ is $o(E_N)$. Similarly, $\sum_{i=2}^R Z_i^*$ also satisfies the central limit theorem with remainder term $O(K E_N)$. In particular,
\[ \sup_{x \in \mathbb{R}} \left| \Pr \left( \frac{\left|\sum_{i=2}^R Z_i^*\right|}{\sqrt{\sum_{i=2}^R \E (Z_i^*)^2}} \ge x \right) - 2(1-\Phi (x)) \right| \ll K E_N . \]
Applying this with $x=\sqrt{\log N}$ and noting that $1-\Phi(\sqrt{\log N})=O(N^{-1/2})=o(E_N)$, we obtain
\begin{equation}\label{zi*upperbound}
\Pr \left( \frac{\left| \sum_{i=2}^R Z_i^* \right|}{\sqrt{C(f,\nu )N}} \gg K E_N \right) \ll K E_N.
\end{equation}
From Lemma \ref{approxerrorlemma} we get $\| \sum_{i=1}^R (Y_i-Y_i^*) \|_2 \ll_{f,\nu} \sum_{i=1}^R |J_i| \sqrt{\Delta_{|H_i|}} \ll_{f,\nu} 1$, hence the Chebyshev inequality gives
\begin{equation}\label{yi-yi*upperbound}
\Pr \left( \frac{\left| \sum_{i=1}^R (Y_i-Y_i^*) \right|}{\sqrt{C(f,\nu )N}} \ge K E_N \right) \ll_{f,\nu} \frac{1}{N E_N^2} = o(E_N).
\end{equation}
We similarly deduce
\begin{equation}\label{zi-zi*upperbound}
\Pr \left( \frac{\left| \sum_{i=2}^R (Z_i-Z_i^*) \right|}{\sqrt{C(f,\nu )N}} \ge K E_N \right) \ll_{f,\nu} \frac{1}{N E_N^2} = o(E_N).
\end{equation}
Finally, note that $\sup_{c \in G} \E f(cX_1)^2 < \infty$ implies $\sup_{k \ge 1} \E f(S_k)^2 < \infty$. Therefore $\| \sum_{k \in H_1} f(S_k) \|_2 \ll_{f,\nu} \log N$, and the Chebyshev inequality gives
\begin{equation}\label{sumH1}
\Pr \left( \frac{\left|\sum_{k \in H_1} f(S_k) \right|}{\sqrt{C(f,\nu ) N}} \ge K E_N \right) \ll_{f,\nu} \frac{\log N}{N E_N^2} = o(E_N).
\end{equation}
Combining \eqref{yi*berryesseen}--\eqref{sumH1} we thus have
\[ \begin{split} \Pr \left( \frac{\sum_{k=1}^N f(S_k)}{\sqrt{C(f, \nu )N}} <x \right) &= \Pr \left( \frac{\sum_{i=1}^R Y_i}{\sqrt{C(f, \nu )N}} <x+O(K E_N) \right) +O(K E_N) \\ &= \Phi (x) + O(K E_N). \end{split} \]
\end{proof}

We now prove the Remark made after Theorem \ref{maintheoremCLT}. If $f \in L^4(G)$, then instead of $\| Y_i^* \|_3 \ll \| f \|_3 \sqrt{\Delta |J_i|}$, Lemma \ref{yi*zi*} gives the slightly better estimate $\| Y_i^* \|_3 \le \| Y_i^* \|_4 \ll \| f \|_2 \sqrt{\Delta |J_i|} + \| f \|_4 \Delta^{3/4} |J_i|^{1/4}$. Therefore if in the definition of $K$ we replace $\| f \|_{2+\delta}$ by $\| f \|_2$, the Lyapunov condition gives that \eqref{yi*berryesseen} and \eqref{zi*upperbound} hold with error terms $O(KE_N)+o(E_N)=O(K E_N)$. The rest of the proof remains unchanged.

\begin{proof}[Proof of Theorem \ref{stronguniformityCLT}] The implication (i)$\Rightarrow$(ii) follows from Theorem \ref{maintheoremCLT} and Proposition \ref{specialcasesprop}. The latter is needed to ensure $C(f,\nu) >0$. We now prove (ii)$\Rightarrow$(i). The facts that $\nu$ is adapted, and that $(\nu^{*k})_{\mathrm{abs}} \neq 0$ for some $k \ge 1$ follow similarly to the proof of Theorem \ref{stronguniformityLIL}. Suppose that $\mathrm{supp}\, \nu$ is contained in a coset $aH$ of some proper closed normal subgroup $H \lhd G$. We have seen in Section \ref{section5.2} that the index $d=|G:H|$ is finite, and $G=\bigcup_{i=1}^d a^i H$. Note that if $k=nd+i$ for some $1 \le i \le d$, then $S_k \in a^i H$ a.s. Letting $f=I_H-\mu (H)$, we thus have $\left| \sum_{k=1}^N f(S_k) \right| \le 1-1/d$ a.s. Hence $N^{-1/2}\sum_{k=1}^{N} f(S_k)$ cannot have a nondegenerate limit distribution.
\end{proof}

\end{document}